\documentclass[11pt, reqno]{amsart}

\usepackage[english]{babel}
\usepackage{hyperref}
\usepackage{amsthm}
\usepackage{color}
\usepackage{mathrsfs}
\usepackage{amsmath}
\usepackage{dirtytalk}
\usepackage{mathtools}
\usepackage{amsfonts}
\usepackage{amssymb}
\usepackage{bm}
\usepackage{physics}
\usepackage{enumitem}
\usepackage{tikz-cd}
\usepackage{a4wide}
\usepackage{cleveref}
\usepackage{esint}
\usepackage{nicefrac}
\usepackage{yfonts}
\numberwithin{equation}{section}

\newtheorem{thm}{Theorem}[section]
\newtheorem*{thm*}{Theorem}
\newtheorem{lem}[thm]{Lemma}
\newtheorem{prop}[thm]{Proposition}

\theoremstyle{definition}
\newtheorem{defn}[thm]{Definition}

\theoremstyle{remark}
\newtheorem{rem}[thm]{Remark}

\newcommand{\fr}{\penalty-20\null\hfill$\blacksquare$}

\newcommand{\diff}{{\mathrm{d}}}

\newcommand{\nablatilde}{\bar{\nabla}}
\newcommand{\Prbar}{\bar{\Pr}}
\newcommand{\pibar}{\bar{\pi}}

\newcommand{\qcr}{{\mathrm{QCR}}}

\newcommand{\qcrbar}{{\mathrm{\bar{QCR}}}}
\newcommand{\dive}{{\mathrm{div}}}

\newcommand{\hess}{{\mathrm{Hess}}}
\newcommand{\Hhess}{{\mathbf{Hess}}}

\newcommand{\capa}{{\mathrm {Cap}}}

\newcommand{\Ric}{{\bf {Ric}}}

\newcommand{\TV}{{\mathrm {TV}}}
\newcommand{\Geod}{{\mathrm {Geod}}}

\newcommand{\Ptwo}{{\mathscr{P}_2 }}

\newcommand{\mres}{\mathbin{\vrule height 1.6ex depth 0pt width 0.13ex\vrule height 0.13ex depth 0pt width 1.3ex}}

\newcommand{\dist}{{\mathsf{d}}}
\newcommand{\sfd}{{\dist}}
\newcommand{\DDelta}{{\mathbf{\Delta}}}

\newcommand{\DeltaH}{{\Delta_{\mathrm{H}}}}

\newcommand{\mass}{{\mathsf{m}}}
\newcommand{\mm}{{\mass}}

\newcommand{\XX}{{\mathsf{X}}}
\newcommand{\X}{\XX}

\newcommand{\EE}{{\mathcal{E}}}
\newcommand{\VV}{{\mathcal{V}}}

\newcommand{\Cqcvf}{\mathcal{QC}(T\XX)}

\newcommand{\Ent}{\mathrm{Ent}}

\newcommand{\defeq}{\mathrel{\mathop:}=}

\newcommand{\GG}{\mathcal{G}}
\newcommand{\MM}{\mathcal{M}}
\newcommand{\HG}{\mathrm{H}}

\newcommand{\RR}{\mathbb{R}}
\newcommand{\NN}{\mathbb{N}}

\newcommand{\Ss}{{\mathrm {S}^2}(\XX)}
\newcommand{\Lp}{{\mathrm {L}}}
\newcommand{\Lploc}{\mathrm {L}_\mathrm{loc}}
\newcommand{\Lpo}{\Lp^0(\mass)}
\newcommand{\Lpc}{\Lp^0(\capa)}

\newcommand{\Lpiloc}{\Lp^\infty_{\mathrm{loc}}(\mass)}

\newcommand{\Lpt}{\Lp^2(\mass)}

\newcommand{\Lpi}{\Lp^\infty(\mass)}

\newcommand{\HSloc}{\mathrm {H^{1,2}_{loc}}}

\newcommand{\HSs}{{\mathrm {H^{1,2}(\XX)}}}

\newcommand{\WHCSs}{{\mathrm {H^{1,2}_C}(T\XX)}}
\newcommand{\WSHs}{{\mathrm {W^{1,2}_H}(T\XX)}}
\newcommand{\WHHSs}{{\mathrm {H^{1,2}_H}(T\XX)}}
\newcommand{\HSsloc}{{\mathrm {H^{1,2}_{loc}(\XX)}}}

\newcommand{\LIP}{{\mathrm {LIP}}}

\newcommand{\LIPbs}{{\mathrm {LIP_{bs}}}}

\newcommand{\Cb}{{C_{\mathrm{b}}}}

\newcommand{\LIPb}{{\mathrm {LIP_{b}}}}

\newcommand{\TestF}{{\mathrm {TestF}}}
\newcommand{\TestFbs}{{\mathrm {TestF_{bs}}}}

\newcommand{\TestV}{{\mathrm {TestV}}}

\newcommand{\TestVbar}{{\mathrm {Test\bar{V}}}}

\newcommand{\tanX}{\Lp^2(T\XX)}

\newcommand{\tanXzero}{\Lp^0(T\XX)}
\newcommand{\tanXinf}{\Lp^\infty(T\XX)}

\newcommand{\tanbvXp}[2]{\Lp^{#1}_{#2}(T\XX)}

\newcommand{\tanbvXzero}[1]{\Lp^0_{#1}(T\XX)}

\newcommand{\tanXcap}{\Lp^0_\capa(T\XX)}
\newcommand{\tanXcaptwo}{\Lp^0_\capa(T^{\otimes 2}\XX)}
\newcommand{\tanXcapinf}{\Lp^\infty_\capa(T\XX)}
\newcommand{\RCD}{{\mathrm {RCD}}}

\newcommand{\Meas}{\mathrm{Meas}}

\def\XXint#1#2#3{{\setbox0=\hbox{$#1{#2#3}{\int}$}
		\vcenter{\hbox{$#2#3$}}\kern-.5\wd0}}

\newcommand{\bnabla}{\bm\nabla}
\newcommand{\bsq}[2]{\pmb [#1,#2\pmb]}
\newcommand{\distrlie}[2]{\bsq{#1}{#2}}
\newcommand{\ru}[3]{{\bold R}(#1,#2)(#3)}
\newcommand{\rd}[4]{\bm{\mathcal R}(#1,#2,#3,#4)}

\begin{document}
	\title[Fine representations on RCD spaces]{Fine representation of Hessian of convex functions \\and Ricci tensor on RCD spaces}
	
	\author[C. Brena]{Camillo Brena}
	\address{C.~Brena: Scuola Normale Superiore, Piazza dei Cavalieri 7, 56126 Pisa} 
	\email{\tt camillo.brena@sns.it}
	
	\author[N. Gigli]{Nicola Gigli}
	\address{N.~Gigli: SISSA, Via Bonomea 265, 34136 Trieste} 
	\email{\tt ngigli@sissa.it}
	\begin{abstract}  It is known that on $\RCD$ spaces one can define a distributional Ricci tensor ${\bf Ric}$. Here we give a fine description of this object by showing that it admits the polar decomposition
	$$
	{\bf Ric}=\omega\,|{\bf Ric}|
	$$
	for a suitable non-negative measure $|{\bf Ric}|$ and unitary tensor field $\omega$. The regularity of both the mass measure and of the polar vector are also described. The representation provided here allows to answer some open problems about the structure of the Ricci tensor in such singular setting. 
	
	Our discussion also covers the case of Hessians of convex functions and, under suitable assumptions on the base space, of the Sectional curvature operator.
	\end{abstract}
\maketitle
\tableofcontents
\section*{Introduction}
A classical statement in modern analysis asserts that a positive distribution is a Radon measure. This fact extends to tensor-valued distributions so that, for instance, the distributional Hessian of a convex function on $\RR^d$, that for trivial reasons is a symmetric non-negative matrix-valued distribution, can be represented by a matrix-valued measure. The proof for the tensor-valued case follows from the scalar-valued case simply by looking at the coordinates of the tensor. To put it differently, the fact that on $\RR^d$ we can find an orthonormal base of the tangent bundle made of smooth vectors allows to regard a tensor-valued distribution as a collection of scalar-valued ones and thus to transfer results valid in the latter case into the former one.

\bigskip

The fact that positive functionals defined on a sufficiently large class of functions are represented by measures can be extended far beyond the Euclidean setting, up to at least locally compact spaces: this is the content of the Riesz--Daniell--Stone representation theorems. In this paper we are concerned with the tensor-valued case when the underlying space is an $\RCD(K,N)$ spaces. These class of spaces, introduced in \cite{Gigli12} after \cite{Lott-Villani09}, \cite{Sturm06I,Sturm06II}, \cite{Ambrosio_2014} (see the surveys \cite{AmbICM}, \cite{gigli2023giorgi} and references therein)  are the non-smooth counterpart of Riemannian manifolds with Ricci curvature $\geq K$ and dimension $\leq N$. One of the key features of these spaces, and in fact the essence of the proposal in  \cite{Gigli12}, is that calculus on them is built upon the notions of \say{Sobolev functions} and \say{integration by parts}. As such, it is perhaps not surprising that distribution-like tensors appear frequently in the field. Let us mention three different instances when this occurs, where the relevant tensor is non-negative (or at least bounded from below):
\begin{itemize}
\item[i)] The {\bf Hessian of a convex function}. As observed in \cite{Ketterer_2015,Sturm14}, to a regular enough function $f$ on an $\RCD$  space one can associate a suitable \say{distributional Hessian} that acts on sufficiently smooth vector fields: reformulating a bit the definition in \cite{Ketterer_2015}, the Hessian of $f$ is the map
\begin{align*}
	&\{\text{smooth vector fields}\}\ni  X,Y\\&\qquad\mapsto\quad {\rm Hess}(f)(X,Y)\defeq -\frac{1}{2}\int_\XX \dive\,X\nabla f\,\cdot\, Y+\dive\,Y\nabla f\,\cdot\, X+\nabla f\,\cdot\,\nabla (X\,\cdot\,Y)\dd\mass
\end{align*}
and it turns out, see  \cite[Theorem 7.1]{Ketterer_2015} that under suitable regularity assumptions on $f$ we have
\[
 \text{$f$ is $\kappa$-convex}\qquad\Leftrightarrow\qquad {\rm Hess}(f)(X,X)\geq\kappa\quad\text{for every $X$},
\]
thus matching the Euclidean distributional characterization of convexity.
\item[ii)] The {\bf Ricci curvature} of an $\RCD(K,\infty)$ space. As discussed in \cite{Gigli14}, one can use the Bochner identity to define what the Ricci tensor is in this low regularity setting, by putting 
\begin{align*}
	&\{\text{smooth vector fields}\}\ni  X,Y\\&\qquad\mapsto\quad {\bf Ric}(X,Y)\defeq \DDelta \frac{X\,\cdot\, Y}{2}+\left( \frac{1}{2} X\,\cdot\,\DeltaH Y+ \frac{1}{2} Y\,\cdot\,\DeltaH X-\nabla X\,\cdot\,\nabla Y\right)\mass
\end{align*}

and it turns out that, see  \cite{Gigli14}, in a suitable sense we have
\[
\text{the space $(\X,\sfd,\mm)$ is $\RCD(\kappa,\infty)$}\qquad\Leftrightarrow\qquad \text{${\bf Ric}(X,X)\geq \kappa|X|^2\mm$}\quad\text{for every $X$}.
\]
\item[iii)] The {\bf Sectional curvature} of an $\RCD(K,\infty)$ space. As discussed in \cite{GigliRiemann}, one can give a meaning to the full Riemann curvature tensor on a generic $\RCD$ space. In general, one cannot expect any sort of regularity on it, as the lower bound on the Ricci, encoded in the $\RCD$ assumption, cannot give any information on the full Riemann tensor. Still, this opens up the possibility of saying when is that the \say{sectional curvature of an $\RCD$ space is bounded from below}. The geometric significance of this statement is still unknown.  
\end{itemize}
In each of these cases, a better understanding of the relevant tensor is desirable and a first step in this direction is to comprehend whether the given bound from below forces it to be a measure-like object. To fix the ideas, let us discuss the case of the Ricci curvature: what one would like to know is whether the operator ${\rm Ric}$ described above can be represented via a sort of polar decomposition as
\begin{equation}
\label{eq:reprric}
{\bf Ric}=\omega\,|{\bf Ric}|,
\end{equation}
where $|{\bf Ric}|$ is a non-negative measure and $\omega$ is a tensor of norm 1 $|{\bf Ric}|$-a.e., meaning that the identity
\[
{\bf Ric}(X,Y)=\omega\cdot(X\otimes Y)\,|{\bf Ric}|
\]
holds as measures for any couple of sufficiently smooth vector fields $X,Y$. The main result of this manuscript is that, yes, a representation like \eqref{eq:reprric} holds for the three tensors discussed above.

Few important remarks are in order (we shall discuss the case of the Ricci curvature, but similar comments are in place for the Hessian and the sectional curvature):
\begin{itemize}
\item[-] A writing like that in the right hand side of \eqref{eq:reprric} requires the tensor field $\omega$ to be $|{\bf Ric}|$-well-defined. In this respect notice that on one side on $\RCD$ spaces tensor fields can be well-defined up to $\capa$-null sets, where $\capa$ is the 2-capacity (in some sense, thanks to the fact that one can speak about Sobolev vector fields - see \cite{debin2019quasicontinuous}). On the other hand,    the mass measure $|{\bf Ric}|$ is absolutely continuous with respect to $\capa$ (because the distributional definition of Ricci tensor  is continuous on the space of Sobolev vector fields). The combination of these two facts makes it possible the writing in \eqref{eq:reprric}. This perfect matching between the regularity achievable by $\omega$ and the sets that can actually be charged by $|{\bf Ric}|$ is far from being a coincidence.

\item[-] The construction of the polar decomposition as in \eqref{eq:reprric}  follows the same rough idea described at the beginning of the introduction: we would like to take a pointwise orthonormal base $X_1,\ldots,X_n$ of sufficiently regular vector field and then study the real valued functionals $\varphi\mapsto \int_\XX\varphi\,\dd {\bf Ric}(X_i,X_j)$. Clearly, even in a smooth Riemannian manifold one in general cannot find such a global orthonormal base, but a first problem we encounter here is that such bases only exists on suitable Borel sets $A_k\subseteq\X$ (whose interior might in general be empty). This causes severe technical complications  in handling the necessary localization arguments, see for instance the proof of Theorem \ref{hessrepr}.

\item[-] Related to the above there is the fact that the mass measure $|{\bf Ric}|$ turns out to be a $\sigma$-finite Borel measure that in general is \emph{not} Radon. More precisely, on the sets $A_k\subseteq\X$ on which we have a pointwise orthonormal base, the restriction of $|{\bf Ric}|$ is finite (whence $\sigma$-finiteness). However, in general it might very well be that there is some point $x\in\X$ such that every neighbourhood of $x$ encounters infinitely many of the $A_k$'s. This happens even in very simple examples such that the tip of a cone, as it is known, see \cite{DePhilZim} and reference therein, that at the tip of a cone every sufficiently regular vector field must vanish.

\item[-] Despite the above, for any couple of sufficiently regular vector fields $X,Y$ we have   $w\cdot( X\otimes Y)\in L^1(|{\bf Ric}|)$.

\item[-] Since $|{\bf Ric}|$ is not Radon, in constructing the representation \eqref{eq:reprric}, and more generally in understanding these distributional objects we have discussed,  we cannot rely on the theory of local vector measures that we recently developed in \cite{BGlvm}.

\item[-] The representation  \eqref{eq:reprric} marks a clear step forward in the understanding of the Ricci tensor on $\RCD$ spaces, as what was previously manageable only via integration by parts - and thus required regularity of the vector fields involved - now is realized as a 0th-order object and thus has a more pointwise meaning. For instance, it allows to quickly solve a problem that was left open in \cite{Gigli14}. The problem was as follows: suppose that $X_i,X,Y$, $i=1,\ldots,n$, are smooth vector fields, that $f_i\in C_b(\X)$ and that $\sum_if_iX_i=X$. Can we conclude that $\sum_if_i{\bf Ric}(X_i,Y)={\bf Ric}(X,Y)$? One certainly expects the answer to be affirmative, but if the only definition of ${\bf Ric}$ involves integration by parts - as it was the case in \cite{Gigli14} - then it is not clear how to conclude, given that in general $f_iX_i$ is not regular enough to justify the necessary computations. On the other hand, the representation  \eqref{eq:reprric} immediately allows to positively answer the question.
\end{itemize}

\section*{Acknowledgements}
The authors wish to thank Luigi Ambrosio for useful suggestions.
Part of this work was carried out at the Fields Institute during the Thematic Program on Nonsmooth Riemannian and Lorentzian Geometry, Toronto 2022. The authors gratefully acknowledge the warm hospitality and the stimulating atmosphere.
\section{Preliminaries}

\subsection{RCD spaces}
In this note we are going to consider $\RCD$ spaces, which we now briefly introduce. An $\RCD(K,N)$ space is an infinitesimally Hilbertian (\cite{Gigli12}) metric measure space $(\XX,\dist,\mass)$  satisfying a lower Ricci curvature bound and an upper dimension bound (meaningful if $N<\infty$) in a synthetic sense according to \cite{Sturm06I,Sturm06II,Lott-Villani09}, see \cite{AmbICM,Villani2017,gigli2023giorgi} and references therein. We assume the reader to be familiar with this material. Whenever we write $\RCD(K,N)$, we implicitly assume that $N<\infty$, unless otherwise stated.

Also, we assume that the reader is familiar with the calculus developed on this kind of non-smooth structures (\cite{Gigli12,Gigli14}, see also \cite{Gigli17,GP19}): in particular, we assume familiarity with Sobolev spaces (and heat flow), with the notions of  (co)tangent module and its tensor and exterior products (see also Section \ref{subsect1} and \ref{subsect2}), and with the  notions of divergence, Laplacian, Hessian and covariant derivative, together with their properties. 

\medskip

We give now our working definition for the space of test functions and test vector fields.
Following \cite{Gigli14,Savare13} (with the additional request of a $\Lp^\infty$ bound on the Laplacian), we define the vector space of test functions on an $\RCD(K,\infty)$ space as
\begin{equation}\notag
	\TestF(\XX)\defeq\{f\in\LIP(\RR)\cap\Lpi\cap D(\Delta): 	\Delta f\in \HSs\cap\Lpi\},
\end{equation}
and the vector space of test vector fields as
\begin{equation}\notag
	\TestV(\XX)\defeq\left\{ \sum_{i=1}^n f_i\nabla g_i : f_i\in\Ss\cap\Lpi,g_i\in\TestF(\XX)\right\}.
\end{equation}
To be precise, the original definition of $\TestV(\XX)$ given by the second named author was slightly different. However, when using test vector fields to define regular subsets of vector fields such as $\WHCSs$ and $\WHHSs$, the two definitions produce the same subspaces, see for example the proofs of \cite[Lemma 3.3 and Lemma 3.4]{BGBV}. The advantage of working with this slightly more general class lies in the fact that 
$$
\frac{1}{1\vee |v|}v\in\TestV(\XX)\qquad\text{for every }v\in\TestV(\XX),
$$
whereas the drawback is that for $v\in\TestV(\XX)$, in general we do not have $\dive(v)\in\Lpi$. Nevertheless, we are still going to need the classical definition  of test vector fields (used, in particular in the references \cite{Gigli14,GigliRiemann} for what concerns Ricci and Riemann tensors): we call such space $\VV$, i.e.\
\begin{equation}\label{oldtest}
	\VV\defeq	\left\{ \sum_{i=1}^n f_i\nabla g_i : f_i,g_i\in\TestF(\XX)\right\}.
\end{equation}
\medskip

For future reference, we recall here \cite[Lemma 3.3]{BGBV}. 
	\begin{lem}\label{calculushodge}
	Let $(\XX,\dist,\mass)$ be an $\RCD(K,\infty)$ space, $X\in\WSHs\cap\tanXinf$ and $f\in \mathrm{S}^2(\XX)\cap\Lpi$. Then $f X\in \WSHs$ and 
	\begin{alignat}{2}\notag
		\dive (f X)&=\nabla f\,\cdot\, X+f\dive\, X,\\\notag
		\diff (f X)&=\nabla f\wedge X+f\diff X.
	\end{alignat}
	If moreover $X\in\WHHSs$, then $f X\in \WHHSs$.
\end{lem}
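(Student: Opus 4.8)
The plan is to reduce everything to the case $f\in\TestF(\XX)$, where the two Leibniz identities (for $X\in\WSHs$) are part of the first-order calculus on $\RCD$ spaces, and then to remove the regularity of $f$ by an approximation argument, using that both the exterior differential $\diff$ on $1$-forms and the divergence $\dive$ are \emph{closed} operators. As a preliminary check, since $|X|\in\Lpi$, $|\nabla f|\in\Lpt$, $\dive X\in\Lpt$, $\diff X\in\Lpt$ and $f\in\Lpi$, each summand on the right-hand sides belongs to $\Lpt$, so the claimed formulas are meaningful; throughout, a vector field is identified with the associated $1$-form.

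For the membership $fX\in\WSHs$ together with the two formulas, I would pick $f_n\in\TestF(\XX)$ with $f_n\to f$ $\mass$-a.e., $\sup_n\|f_n\|_{\Lpi}<\infty$, and $\nabla f_n\to\nabla f$ in $\tanX$. By the $\TestF(\XX)$-case one has $f_nX\in\WSHs$ with $\dive(f_nX)=\nabla f_n\cdot X+f_n\dive X$ and $\diff(f_nX)=\nabla f_n\wedge X+f_n\diff X$. Then $f_nX\to fX$ in $\tanX$ by dominated convergence; $\nabla f_n\cdot X\to\nabla f\cdot X$ and $\nabla f_n\wedge X\to\nabla f\wedge X$ in $\Lpt$, since each is the product of an $\Lpt$-convergent sequence with the $\Lpi$-bounded factor $X$; and $f_n\dive X\to f\dive X$, $f_n\diff X\to f\diff X$ again by dominated convergence. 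Closedness of $\dive$ and $\diff$ then yields $fX\in\WSHs$ with the desired identities. This is exactly the point where $X\in\Lpi$ enters: for $f\in\TestF(\XX)$ one has $|\nabla f|\in\Lpi$, so there no bound on $X$ is needed.

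For the last assertion, assume moreover $X\in\WHHSs$ and pick $Y_k\in\TestV(\XX)$ with $Y_k\to X$ in $\WSHs$. Since $\TestV(\XX)$ is stable under multiplication by functions in $\Ss\cap\Lpi$ — in particular by each $f_n$ — one has $f_nY_k\in\TestV(\XX)$. For fixed $n$ the function $f_n$ is bounded and Lipschitz, hence $|\nabla f_n|\in\Lpi$, and repeating the estimates above with $n$ frozen and $Y_k\to X$ in $\WSHs$ gives $f_nY_k\to f_nX$ in $\WSHs$; thus $f_nX\in\WHHSs$. Letting $n\to\infty$, the same computation as before (this time using $X\in\Lpi$) gives $f_nX\to fX$ in $\WSHs$, so $fX\in\WHHSs$ because $\WHHSs$ is closed in $\WSHs$.

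Everything except one step is routine: the convergences are all of the type "$\Lpt$-convergent $\times$ $\Lpi$-bounded" or dominated convergence, combined with closedness of $\diff$, $\dive$ and with $\WHHSs$ being the $\WSHs$-closure of $\TestV(\XX)$. The genuinely delicate point — and the one I expect to be the main obstacle — is the initial approximation of $f\in\Ss\cap\Lpi$ by $f_n\in\TestF(\XX)$ with equibounded $\Lp^\infty$-norms and with $\nabla f_n\to\nabla f$ in $\tanX$: on $\RCD$ spaces of infinite mass even localising a function within $\Ss$ is subtle, and this is precisely what forces the hypothesis $f\in\Lpi$ (rather than merely $f\in\Ss$) into the statement.
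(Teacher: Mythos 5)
This lemma is not proved in the paper at all: it is recalled verbatim from \cite[Lemma 3.3]{BGBV}, so there is no in-paper argument to compare with. Judged on its own terms, your proposal has one genuine gap, and it sits exactly at the point you yourself flag as ``the main obstacle''. The approximating sequence you require --- $f_n\in\TestF(\XX)$ with $\sup_n\Vert f_n\Vert_{\Lpi}<\infty$, $f_n\to f$ $\mass$-a.e.\ and $\nabla f_n\to\nabla f$ in $\tanX$ --- does not exist in general, because $f\in\Ss\cap\Lpi$ need not belong to $\Lpt$ whereas every test function does. Concretely, take $\XX=\RR^3$ with the Lebesgue measure (an $\RCD(0,\infty)$ space) and $f\equiv 1$: for any $f_n\in\TestF(\XX)\subseteq\HSs$ with $\nabla f_n\to 0=\nabla f$ in $\Lpt$, the Sobolev inequality gives $\Vert f_n\Vert_{\Lp^6}\le C\Vert \nabla f_n\Vert_{\Lp^2}\to 0$, hence $f_n\to 0$ a.e.\ along a subsequence, which is incompatible with $f_n\to 1$ a.e.\ together with uniform $\Lp^\infty$ bounds. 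So the reduction to the test-function case cannot be carried out globally, and the (correct) limit passages that follow have nothing to act on. Note also that this cannot be dismissed as a degenerate case: the whole point of stating the lemma for $f\in\Ss\cap\Lpi$ rather than $f\in\HSs\cap\Lpi$ is to cover such non-integrable $f$ (e.g.\ the truncations $\tilde\varphi_n\circ|X|$ in Lemma \ref{Amb}, which are identically $1$ on a possibly unbounded set).

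The standard repair, which is how \cite[Lemma 3.3]{BGBV} proceeds, is to verify the defining duality identities directly rather than approximating $f$. For the divergence: for $g\in\LIPbs(\XX)$ one has $fg\in\HSs$ with $\nabla(fg)=f\nabla g+g\nabla f$, whence
\[
\int_\XX \nabla g\cdot(fX)\,\dd\mass=\int_\XX\nabla(fg)\cdot X-g\,\nabla f\cdot X\,\dd\mass=-\int_\XX g\,\big(f\dive X+\nabla f\cdot X\big)\,\dd\mass,
\]
and the bracket is in $\Lpt$ precisely because $|\nabla f|\in\Lpt$ and $|X|\in\Lpi$ (you correctly identify this as the role of the $\Lpi$ bound on $X$); density of $\LIPbs(\XX)$ in $\HSs$ concludes, and the formula for $\diff$ is obtained analogously from its defining relation. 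If you insist on an approximation argument, you may only demand convergence of the \emph{products}: with $f_n=\eta_n f$ for good cut-offs $\eta_n$ one gets $\nabla f_n\cdot X\to\nabla f\cdot X$ in $\Lpt$ since $|f\nabla\eta_n\cdot X|\le\Vert f\Vert_{\Lpi}\tfrac{C}{n}|X|$ with $|X|\in\Lpt$ --- but such $f_n$ are not test functions, so this does not feed back into your base case. The remainder of your write-up (the Leibniz rules for $f\in\TestF(\XX)$, the use of closedness of $\dive$ and $\diff$, and the two-parameter approximation giving the $\WHHSs$ membership) is correct as stated.
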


The following calculus lemma will serve as a key tool in proving, in a certain sense, a strong locality property of some measures.
\begin{lem}\label{Amb}
	Let $(\XX,\dist,\mass)$ be an $\RCD(K,\infty)$ space and let $X\in\WHHSs$.
	Take $\{\tilde{\varphi}_n\}_n\subseteq\LIPb(\RR)$ 
	defined by
	\begin{equation}\notag
		\tilde{\varphi}_n(x)\defeq
		\begin{cases}
			1\qquad&\text{if }x\le 0,\\
			1-n x\qquad&\text{if } 0<x<n^{-1},\\
			0\qquad&\text{if }x\ge n^{-1} .\\
		\end{cases}
	\end{equation}
	Let then ${\varphi}_n\defeq\tilde{\varphi}_n\circ \abs{X}$. Then $\varphi_n\in\HSs$, $\Vert \varphi_n\Vert_{\Lpi}\le 1$ and $\varphi_n X\rightarrow 0$ in the $\WSHs$ topology.
\end{lem}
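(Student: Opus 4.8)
The plan is to check separately the three convergences that together amount to $\varphi_n X\to 0$ in the $\WSHs$ topology, namely $\varphi_n X\to 0$ in $\tanX$, $\dive(\varphi_n X)\to 0$ in $\Lpt$, and $\diff(\varphi_n X)\to 0$ in $\Lp^2(\Lambda^2 T^*\XX)$. The main preliminary input is that, as a consequence of the Bochner inequality, $\WHHSs\subseteq\WHCSs$; so our $X$ belongs to $\WHCSs$, whence $\abs{X}\in\Ss$ with $\abs{\nabla\abs{X}}\le\abs{\nabla X}\in\Lpt$, and, by locality of the covariant derivative, $\nabla X=0$ $\mm$-a.e.\ on $\{\abs{X}=0\}$, hence also $\dive X=0$ and $\diff X=0$ $\mm$-a.e.\ there.

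Granting this, the statements on $\varphi_n$ follow at once. As $\tilde\varphi_n$ is Lipschitz, the chain rule gives $\varphi_n=\tilde\varphi_n\circ\abs{X}\in\HSs$ with
\[
\nabla\varphi_n=-n\,\mathbf{1}_{\{0<\abs{X}<n^{-1}\}}\,\nabla\abs{X},
\]
where we also use that $\nabla\abs{X}=0$ $\mm$-a.e.\ on the level sets $\{\abs{X}=0\}$ and $\{\abs{X}=n^{-1}\}$, while $0\le\tilde\varphi_n\le1$ forces $\Vert\varphi_n\Vert_{\Lpi}\le1$. For the convergence of $\varphi_n X$ in $\tanX$ one notes that $\abs{\varphi_n X}\le\abs{X}\in\Lpt$ and that, for $\mm$-a.e.\ $x$, $\varphi_n X(x)=0$ for every $n$ if $X(x)=0$ and $\varphi_n X(x)=0$ whenever $n>\abs{X(x)}^{-1}$ otherwise, so that $\varphi_n X\to 0$ in $\tanX$ by dominated convergence.

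For the first-order part, the point is that $\varphi_n$ vanishes outside $\{\abs{X}<n^{-1}\}$, where $X$ agrees with its bounded truncation $Y_n\defeq\min(1,\tfrac{1}{n\abs{X}})X\in\WHCSs\cap\tanXinf\subseteq\WSHs\cap\tanXinf$; since moreover $\varphi_n=\tilde\varphi_n\circ\abs{Y_n}$ and $\varphi_n X=\varphi_n Y_n$, Lemma \ref{calculushodge} applies to $(\varphi_n,Y_n)$ and, using locality of $\dive$ and $\diff$ to pass back from $Y_n$ to $X$, gives $\varphi_n X\in\WSHs$ together with
\begin{align*}
\dive(\varphi_n X)&=\nabla\varphi_n\,\cdot\, X+\varphi_n\dive X=-n\,\mathbf{1}_{\{0<\abs{X}<n^{-1}\}}(\nabla\abs{X}\,\cdot\, X)+\varphi_n\dive X,\\
\diff(\varphi_n X)&=\nabla\varphi_n\wedge X+\varphi_n\diff X=-n\,\mathbf{1}_{\{0<\abs{X}<n^{-1}\}}(\nabla\abs{X}\wedge X)+\varphi_n\diff X.
\end{align*}
We then let $n\to\infty$ term by term. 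On $\{0<\abs{X}<n^{-1}\}$ we have $n\,\abs{\nabla\abs{X}\,\cdot\, X}\le n\,\abs{\nabla\abs{X}}\,\abs{X}\le\abs{\nabla\abs{X}}$ and likewise $n\,\abs{\nabla\abs{X}\wedge X}\le\abs{\nabla\abs{X}}$, so both leading terms are bounded in modulus by $\mathbf{1}_{\{0<\abs{X}<n^{-1}\}}\abs{\nabla\abs{X}}$, which tends to $0$ $\mm$-a.e.\ and is dominated by $\abs{\nabla\abs{X}}\in\Lpt$: hence they go to $0$ in $\Lpt$. The remaining terms satisfy $\varphi_n\dive X\to\mathbf{1}_{\{\abs{X}=0\}}\dive X$ and $\varphi_n\diff X\to\mathbf{1}_{\{\abs{X}=0\}}\diff X$ pointwise, dominated by $\abs{\dive X}\in\Lpt$ and by $\abs{\diff X}$, and both limits vanish by the locality recorded above. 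Thus $\dive(\varphi_n X)\to 0$ in $\Lpt$ and $\diff(\varphi_n X)\to 0$, which combined with the previous step yields $\varphi_n X\to 0$ in the $\WSHs$ topology.

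The limiting step is just a succession of dominated-convergence arguments; the real content is in arranging the set-up. For a field only known to lie in the Hodge--Sobolev class $\WHHSs$, it is \emph{not} obvious that $\abs{X}$ is Sobolev (needed to give sense to $\nabla\varphi_n$ and to run the chain rule) nor that $\dive X$ and $\diff X$ vanish $\mm$-a.e.\ on $\{\abs{X}=0\}$, and this is precisely where the inclusion $\WHHSs\subseteq\WHCSs$ is used. A secondary, technical point is the reduction to a bounded vector field, imposed by the hypotheses of Lemma \ref{calculushodge} and harmless since $\varphi_n$ lives on $\{\abs{X}<n^{-1}\}$; if one wishes for convergence in the stronger $\WHHSs$ topology, it is enough to first approximate $X$ by bounded vector fields in $\WHHSs$ and pass to the limit.
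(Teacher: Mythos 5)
Your overall scheme (chain rule for $\varphi_n$, Leibniz formulas for $\dive(\varphi_n X)$ and $\diff(\varphi_n X)$, dominated convergence) is the same as the paper's, but there is a genuine gap at the point where you dispose of the terms $\varphi_n\dive X$ and $\varphi_n\diff X$. You argue that $\nabla X=0$ $\mm$-a.e.\ on $\{\abs{X}=0\}$ and \emph{hence} $\dive X=0$ and $\diff X=0$ $\mm$-a.e.\ there. That ``hence'' implicitly uses the identities $\dive X=\tr\nabla X$ and $\diff X=$ (antisymmetrization of $\nabla X$), which are available for vector fields in $\WHCSs$ but not for a general $X\in\WHHSs$: the Bochner inequality only places $\WHHSs$ inside $\WSCs$ (not $\WHCSs$, as you write), and for $\WSCs$-fields the divergence and exterior derivative, being defined by duality, are not known to be pointwise functions of the covariant derivative. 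Indeed, the statement ``$\dive X=0$ and $\diff X=0$ $\mm$-a.e.\ on $\{X=0\}$ for $X\in\WHHSs$'' is exactly what the Remark following the lemma records as a \emph{consequence} of the lemma, so it cannot be fed in as an input. The paper's route sidesteps the issue: having shown that $\varphi_nX\to0$ in $\tanX$ while $\dive(\varphi_nX)$ and $\diff(\varphi_nX)$ converge in $\Lpt$ and in $\mathrm{L}^{2}(\Lambda^2 T^*\XX)$ respectively, the closedness of $\dive$ and $\diff$ forces the limits to equal $\dive(0)=0$ and $\diff(0)=0$. Your argument is repaired by replacing the locality claim with this closure argument, which costs one line.

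A secondary weak point is the reduction to the bounded field $Y_n=\min(1,\tfrac1{n\abs X})X$: you assert $Y_n\in\WHCSs\cap\tanXinf\subseteq\WSHs\cap\tanXinf$, but the product rule at best gives $Y_n\in\WSCs$, and neither $Y_n\in\WHCSs$ nor the inclusion $\WSCs\subseteq\WSHs$ is known, so Lemma \ref{calculushodge} does not obviously apply to the pair $(\varphi_n,Y_n)$. The paper instead obtains the Leibniz formulas for $\dive(\varphi_nX)$ and $\diff(\varphi_nX)$ directly by integration by parts against test objects, the key point being that the bound $\abs{\nabla\varphi_n}\,\abs X\le\chi_{\{\abs X\in(0,n^{-1})\}}\abs{\nabla X}\in\Lpt$ makes all the formal terms square-integrable.
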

\begin{proof}
	By \cite[Lemma 2.5]{debin2019quasicontinuous}, $\varphi_n\in\HSs$ for every $n$ with 
	$$
	\abs{\nabla \varphi_n}= |{\tilde{\varphi}_n'}|\circ\abs{X}\,\abs{\nabla \abs{X}}\le n\chi_{\{\abs{X}\in(0,n^{-1})\}}\abs{\nabla X}\qquad\mass\text{-a.e.}
	$$
	In particular,
	\begin{equation}\label{to0}
		\abs {\nabla \varphi_n}\abs{X}\le \chi_{\{\abs{X}\in(0,n^{-1})\}}\abs{\nabla X}\qquad\mass\text{-a.e.}
	\end{equation}
	
	Notice that integrating by parts and using standard approximation arguments, taking into account \eqref{to0} (which also gives the membership of the right hand sides to the relevant spaces) we have
	\begin{align*}
		\dive(\varphi_n X)&=\nabla\varphi_n\,\cdot\,X+\varphi_n \dive\,X\in\Lpt\\	\diff(\varphi_n X)&=\nabla\varphi_n\wedge X+\varphi_n \dd\,X\in\mathrm{L}^{2}(\Lambda^2 T^*\XX).
	\end{align*}
	Therefore,
	\begin{align*}
		\dive(\varphi_n X)\rightarrow \chi_{\{\abs{X}=0\}}\dive\, X\qquad&\text{in }\Lpt\\	\diff(\varphi_n X)\rightarrow \chi_{\{\abs{X}=0\}}\diff X\qquad&\text{in }\mathrm{L}^{2}(\Lambda^2 T^*\XX).
	\end{align*}
	Now, by dominated convergence, $\varphi_n X\rightarrow 0$ in $\tanX$ so that by the closure of the operators $\dive$ and $\diff$ (see \cite[Theorem 3.5.2]{Gigli14}), it holds that $\varphi_n X\rightarrow 0$ in $\WSHs$.
\end{proof}

\begin{rem}
	Inspecting the proof of Lemma \ref{Amb}, we see that if $(\XX,\dist,\mass)$ is an $\RCD(K,\infty)$ space and $X\in\WHHSs$, then $\dive\,X=0\ \mass$-a.e.\ on $\{X=0\}$ and similarly $\diff X=0\ \mass$-a.e.\ on $\{X=0\}$.\fr
\end{rem}

\subsection{Cap-modules}\label{cdnocads}
In this subsection we recall the basic theory of $\capa$-modules for $\RCD$ spaces. 
We assume familiarity with the definition of capacitary modules, quasi-continuous functions and vector fields and related material in \cite{debin2019quasicontinuous}. A summary of the material we will use can be found in \cite[Section 1.3]{bru2019rectifiability}. For the reader's convenience, we write the results that we will need most frequently.
\medskip

First, we recall that exploiting Sobolev functions, we define the $2$-capacity (to which we shall simply refer as capacity) of any set $A\subseteq\XX$ as 
\begin{equation}\label{defcapa}
	\capa(A)\defeq\inf\left\{ \Vert f\Vert_{\HSs}^2:f\in\HSs,\ f\ge 1\ \mass\text{-a.e.\ on some neighbourhood of $A$}\right\}. 
\end{equation}
An important object will be the one of fine tangent module, as follows ($\qcr$ stands for \say{quasi continuous representative}).

\begin{thm}[{\cite[Theorem 2.6]{debin2019quasicontinuous}}]\label{tancapa}
	Let $(\XX,\dist,\mass)$ be an $\RCD(K,\infty)$ space.
	Then there exists a unique couple $(\tanXcap,\nablatilde)$, where $\tanXcap$ is a $\Lp^0(\capa)$-normed $\Lp^0(\capa)$-module and $\nablatilde:	\TestF(\XX) \rightarrow\tanXcap$ is a linear operator such that:
	\begin{enumerate}[label=\roman*)]
		\item
		$|{\nablatilde f}|=\qcr(\abs{\nabla f}) \ \capa$-a.e.\ for every $f\in\TestF(\XX)$,
		\item
		the set $\left\{\sum_{n} \chi_{E_n}\nablatilde f_n\right\}$, where $\{f_n\}_n\subseteq\TestF(\XX)$ and $\{E_n\}_n$ is a Borel partition of $\XX$ is dense in $\tanXcap$.
	\end{enumerate}
	Uniqueness is intended up to unique isomorphism, this is to say that if another couple $(\tanXcap',\nablatilde')$ satisfies the same properties, then there exists a unique module isomorphism $\Phi:\tanXcap\rightarrow\tanXcap'$ such that $\Phi\circ \nablatilde=\nablatilde'$.
	Moreover, $\tanXcap$ is a Hilbert module that we call capacitary tangent module.
\end{thm}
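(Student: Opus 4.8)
The plan is to build $(\tanXcap,\nablatilde)$ by the standard \emph{generators-and-relations plus completion} construction of $\Lp^0$-normed modules (as developed in \cite{debin2019quasicontinuous}), taking as generators formal symbols $e_f$, one for each $f\in\TestF(\XX)$, and prescribing their pointwise inner products by means of quasi-continuous representatives. Two preliminary facts should be recorded first. \textbf{(a)} For $f,g\in\TestF(\XX)$ the functions $\abs{\nabla f}^2$ and $\nabla f\cdot\nabla g$ belong to $\HSs\cap\Lpi$ — this is part of the regularity theory of test functions — and hence admit quasi-continuous representatives; we set $P(f,g)\defeq\qcr(\nabla f\cdot\nabla g)\in\Lp^0(\capa)$ and note that $\sqrt{P(f,f)}=\sqrt{\qcr(\abs{\nabla f}^2)}$ is a quasi-continuous representative of $\abs{\nabla f}$, which we denote $\qcr(\abs{\nabla f})$. \textbf{(b)} Two quasi-continuous functions that agree $\mass$-a.e.\ agree $\capa$-a.e. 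Combining (a)--(b): $P$ is $\RR$-bilinear and symmetric, and for any $f_1,\dots,f_n\in\TestF(\XX)$ the matrix $\big(P(f_i,f_j)\big)_{i,j}$ is positive semidefinite $\capa$-a.e.\ -- indeed for each fixed $q\in\mathbb{Q}^n$ the function $\sum_{i,j}q_iq_j\,\nabla f_i\cdot\nabla f_j=\big|\sum_i q_i\nabla f_i\big|^2$ is $\geq 0$ $\mass$-a.e., hence $\sum_{i,j}q_iq_j\,P(f_i,f_j)\geq 0$ $\capa$-a.e.\ by (b), and one intersects the countably many resulting $\capa$-null exceptional sets and passes from $\mathbb{Q}^n$ to $\RR^n$ by continuity.

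For existence, consider the $\Lp^0(\capa)$-module $\mathcal V_0$ of finite sums $\sum_i a_i e_{f_i}$ with $a_i\in\Lp^0(\capa)$ simple and $f_i\in\TestF(\XX)$, and define on it the candidate pointwise squared norm $\big|\sum_i a_i e_{f_i}\big|^2\defeq\sum_{i,j}a_i a_j\,P(f_i,f_j)\in\Lp^0(\capa)$. Using (a)--(b) and the positive semidefiniteness above, one checks that this is well-defined, that $\abs{c\,v}=\abs{c}\,\abs{v}$ $\capa$-a.e.\ for $c\in\Lp^0(\capa)$, and that it satisfies the pointwise triangle and parallelogram laws $\capa$-a.e.; thus it descends to a pointwise Hilbert norm on $\mathcal V_0$ modulo the submodule $\{v:\abs{v}=0\ \capa\text{-a.e.}\}$. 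One then lets $\tanXcap$ be the completion of this quotient in the $\Lp^0(\capa)$-topology — the usual $\Lp^0$ construction applies since the $\capa$-null Borel sets form a $\sigma$-ideal — and sets $\nablatilde f\defeq[e_f]$. Then: $\abs{\nablatilde f}^2=P(f,f)$, i.e.\ property (i); the map $f\mapsto\nablatilde f$ is linear because the elements $e_{f+g}-e_f-e_g$ and $e_{\lambda f}-\lambda e_f$ have vanishing pointwise norm by bilinearity of $P$; property (ii) holds because every element of $\mathcal V_0$ equals, after passing to the Borel partition $\{E_k\}$ generated by the level sets of the $a_i$, a finite sum $\sum_k\chi_{E_k}\nablatilde g_k$ with $g_k\in\TestF(\XX)$, and such elements are dense by construction (a finite Borel partition being a Borel partition); and the Hilbert property passes from the generators to the dense submodule and then to $\tanXcap$ by continuity of the norm.

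For uniqueness, let $(\tanXcap',\nablatilde')$ satisfy the same properties. By $\capa$-a.e.\ locality of the module norm together with (i), for every $\{f_n\}_n\subseteq\TestF(\XX)$ and every Borel partition $\{E_n\}_n$ one has $\big|\sum_n\chi_{E_n}\nablatilde f_n\big|=\sum_n\chi_{E_n}\qcr(\abs{\nabla f_n})$ $\capa$-a.e.\ \emph{in both modules}, and two such sums coincide in $\tanXcap$ if and only if $\qcr(\abs{\nabla(f_n-g_m)})=0$ $\capa$-a.e.\ on $E_n\cap F_m$ for all $n,m$ — a condition that does not refer to the module. Hence $\Phi\big(\sum_n\chi_{E_n}\nablatilde f_n\big)\defeq\sum_n\chi_{E_n}\nablatilde' f_n$ is a well-defined, $\Lp^0(\capa)$-linear, norm-preserving bijection between the dense submodules furnished by (ii); it extends uniquely to a module isomorphism $\tanXcap\to\tanXcap'$, it satisfies $\Phi\circ\nablatilde=\nablatilde'$ by construction, and density forces it to be the only such map.

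The main obstacle is precisely the passage from $\mass$-a.e.\ to $\capa$-a.e.\ structure, i.e.\ establishing fact (a) — that for test functions the relevant gradient functions genuinely have quasi-continuous representatives — and then verifying that all the pointwise (in)equalities that are evident $\mass$-a.e.\ in the Hilbert module $\tanX$ (positive semidefiniteness of the Gram matrices, the triangle and parallelogram laws, the bilinearity relations) survive for the quasi-continuous representatives; the engine here is fact (b) combined with the countability trick of testing against $\mathbb{Q}$-vectors and discarding countably many $\capa$-null sets. Once these pointwise facts and the $\Lp^0(\capa)$ formalism are in place, the completion, the module axioms, and the density and uniqueness statements are routine.
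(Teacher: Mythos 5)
Your construction is correct and is essentially the argument of the cited source \cite[Theorem 2.6]{debin2019quasicontinuous} (the paper itself gives no proof here): that reference builds $\tanXcap$ exactly by completing the pre-module of simple-coefficient combinations of formal gradients, with pointwise norm prescribed through quasi-continuous representatives of $\nabla f\cdot\nabla g$ — available because $\abs{\nabla f}^2\in\HSs\cap\Lpi$ for test functions by Savar\'e's self-improvement — and with the $\mass$-a.e.\ to $\capa$-a.e.\ upgrade coming from the injectivity of $\Pr$ on quasi-continuous functions, just as in your facts (a)--(b). Your uniqueness argument via the dense class in (ii) is likewise the standard one.
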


Notice that we can, and will, extend the map $\qcr$ (\cite{debin2019quasicontinuous}) from $\HSs$ to $\Ss\cap\Lpi$ by a locality argument. Also, we often omit to write the map $\qcr$.
We define
\begin{equation}\notag
	\TestVbar(\XX)\defeq \left\{ \sum_{i=1}^n \qcr(f_i) \nablatilde g_i :f_i\in\Ss\cap\Lpi,g_i\in\TestF(\XX) \right\}\subseteq\tanXcap.
\end{equation}

We define also the vector subspace of quasi-continuous vector fields, $\Cqcvf$, as the closure of $\TestVbar(\XX)$ in $\tanXcap$.

Recall now that as $\mass\ll\capa$, we have a natural projection map
\begin{equation}\notag
	\Pr:\Lpc\rightarrow\Lpo \qquad \text{defined as}\qquad[f]_{\Lpc}\mapsto [f]_{\Lpo}
\end{equation}
where $[f]_{\Lpc}$ (resp.\ $[f]_{\Lpo}$) denotes the $\capa$ (resp.\ $\mass$) equivalence class of $f$. It turns out that $\Pr$, restricted to the set of quasi-continuous functions, is injective (\cite[Proposition 1.18]{debin2019quasicontinuous}).
We have the following projection map $\Prbar$, given by \cite[Proposition 2.9 and Proposition 2.13]{debin2019quasicontinuous}, that plays the role of $\Pr$ on vector fields. 
\begin{prop}\label{prbardef}
	Let $(\XX,\dist,\mass)$ be an $\RCD(K,\infty)$ space. There exists a unique linear continuous map \begin{equation}\notag
		\Prbar :\tanXcap\rightarrow\tanXzero
	\end{equation}
	that satisfies
	\begin{enumerate}[label=\roman*)]
		\item $\Prbar (\nablatilde f)=\nabla f$ for every $f\in\TestF(\XX)$,
		\item $\Prbar (g v)=\Pr(g)\Prbar(v)$ for every $g\in\Lpc$ and $v\in\tanXcap$.
	\end{enumerate}
	Moreover, for every $v\in\tanXcap$,
	\begin{equation}\notag
		\abs{\Prbar(v)}=\Pr(\abs{v})\qquad\mass\text{-a.e.}
	\end{equation}
	and $\Prbar$, when restricted to the set of quasi-continuous vector fields, is injective.
\end{prop}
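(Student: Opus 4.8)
The plan is to build $\Prbar$ first on the \emph{simple} elements $v=\sum_n\chi_{E_n}\nablatilde f_n$, with $\{E_n\}_n$ a Borel partition and $f_n\in\TestF(\XX)$ — these are dense in $\tanXcap$ by property ii) of Theorem \ref{tancapa} — by setting
\[
\Prbar\Big(\sum_n\chi_{E_n}\nablatilde f_n\Big)\defeq\sum_n\chi_{E_n}\nabla f_n\in\tanXzero,
\]
where $\chi_{E_n}$ is now read in $\Lp^0(\mass)$. Everything hinges on the single pointwise identity
\[
\abs{\Prbar(v)}^2=\sum_n\chi_{E_n}\abs{\nabla f_n}^2=\Pr\Big(\sum_n\chi_{E_n}\qcr(\abs{\nabla f_n})^2\Big)=\Pr(\abs{v}^2)\qquad\mass\text{-a.e.},
\]
which follows from property i) of Theorem \ref{tancapa}, from $\Pr\circ\qcr=\Id$ on $\Ss\cap\Lpi$, and from $\Pr$ being a ring morphism with $\Pr(\chi_{E_n})=\chi_{E_n}$. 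Since $\mass\ll\capa$, this identity — applied to the difference of two representations of the same $v$, which after passing to a common refinement of the two partitions is again a simple element — shows that $\Prbar$ is well defined; applied in general it shows that $\Prbar$ transports the pointwise norm along $\Pr$, hence, using continuity of $\Pr$ between the $\Lp^0$-topologies and completeness of $\tanXzero$, that $\Prbar$ extends uniquely to a continuous linear map on all of $\tanXcap$, the identity $\abs{\Prbar(v)}=\Pr(\abs{v})$ persisting in the limit.

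Property i) of the statement is then just the definition on the trivial partition. Property ii) is first checked for $g$ a Borel simple function and $v$ simple, where $gv$ is again simple (constants slide inside $\nablatilde$ by linearity) and $\Prbar(gv)=\Pr(g)\Prbar(v)$ is a one-line computation; it then extends to arbitrary $g\in\Lpc$ and $v\in\tanXcap$ by density of simple functions in $\Lp^0(\capa)$ and of simple elements in $\tanXcap$, using the separate continuity of scalar multiplication in both modules together with the continuity of $\Prbar$ and of $\Pr$. Uniqueness is immediate, since i) and ii) force $\Prbar(\chi_E\nablatilde f)=\chi_E\nabla f$, hence determine $\Prbar$ on simple elements and therefore, by continuity, everywhere.

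For injectivity on $\Cqcvf$, let $v\in\Cqcvf$ with $\Prbar(v)=0$, i.e.\ $\Pr(\abs{v})=0$ $\mass$-a.e.\ by the norm identity. The key point is that $\abs{v}$ admits a quasi-continuous representative: for $v=\sum_i\qcr(f_i)\nablatilde g_i\in\TestVbar(\XX)$ one has, by polarization, $\abs{v}^2=\sum_{i,j}\qcr(f_i)\qcr(f_j)\,\qcr(\langle\nabla g_i,\nabla g_j\rangle)$, a finite sum of products of quasi-continuous functions, whence $\abs{v}$ is quasi-continuous; and for general $v\in\Cqcvf$ one approximates in $\tanXcap$ — which forces $\abs{v_k}\to\abs{v}$ in $\Lp^0(\capa)$ — and passes to a $\capa$-a.e.\ convergent subsequence, recalling that the class of quasi-continuous functions is closed under such limits. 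Then $\abs{v}$ is a quasi-continuous function vanishing $\mass$-a.e., so it vanishes $\capa$-a.e.\ by injectivity of $\Pr$ on quasi-continuous functions (\cite[Proposition 1.18]{debin2019quasicontinuous}), i.e.\ $v=0$ in $\tanXcap$.

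I expect the last paragraph to be the only genuinely non-formal part: the extension of $\Prbar$ and the verification of i)–ii) are soft module algebra once the norm identity $\abs{\Prbar(v)}=\Pr(\abs{v})$ is in hand (the one mild subtlety being the common-refinement bookkeeping in the well-definedness step), whereas injectivity rests on the scalar fact that quasi-continuous functions vanishing $\mass$-a.e.\ vanish $\capa$-a.e.\ — which itself uses $\supp\mass=\XX$ — together with the stability of quasi-continuity under the limits defining $\Cqcvf$.
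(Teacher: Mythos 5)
This proposition is imported from \cite[Propositions 2.9 and 2.13]{debin2019quasicontinuous} and the paper gives no proof of its own; your construction — defining $\Prbar$ on the dense class of simple elements $\sum_n\chi_{E_n}\nablatilde f_n$, proving the pointwise norm identity $\abs{\Prbar(v)}=\Pr(\abs{v})$ there, extending by continuity, and reducing injectivity on $\Cqcvf$ to the scalar injectivity of $\Pr$ on quasi-continuous functions — is exactly the argument of that reference, and it is correct. The only point worth tightening is the closedness step in the injectivity argument: a $\capa$-a.e.\ convergent subsequence is not by itself enough to preserve quasi-continuity, so one should instead use that convergence in $\Lp^0(\capa)$ yields a quasi-uniformly convergent subsequence, which is the form in which the class of quasi-continuous functions is closed.
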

Notice that $\Prbar(\TestVbar(\XX))=\TestV(\XX)$. When there is be no ambiguity, we omit to write the map $\Prbar$.

\begin{thm}[{\cite[Theorem 2.14 and Proposition 2.13]{debin2019quasicontinuous}}]
	Let $(\XX,\dist,\mass)$ be an $\RCD(K,\infty)$ space. Then there exists a unique map $\qcrbar:\WHCSs\rightarrow\tanXcap$ such that
	\begin{enumerate}[label=\roman*)]
		\item $\qcrbar (v)\in\Cqcvf$ for every $v\in\WHCSs$,
		\item $\Prbar\circ{\qcrbar}(v)=v$ for every $v\in\WHCSs$.
	\end{enumerate}
	Moreover, $\qcrbar$ is linear and satisfies
	\begin{equation}\notag
		\abs{\qcrbar(v)}=\qcr(\abs{v})\qquad \capa\text{-a.e.\ for every }v\in\WHCSs,
	\end{equation}
	so that $\qcrbar$ is continuous.
\end{thm}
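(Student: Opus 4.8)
The plan is to construct $\qcrbar$ first on the dense subspace $\TestV(\XX)\subseteq\WHCSs$, to establish there the pointwise norm identity, and then to extend by density; uniqueness will be immediate from the injectivity of $\Prbar$ on $\Cqcvf$. Indeed, if $\qcrbar_1,\qcrbar_2$ both satisfy i) and ii), then for each $v\in\WHCSs$ the vector fields $\qcrbar_1(v),\qcrbar_2(v)$ lie in $\Cqcvf$ and have the same $\Prbar$-image $v$, hence coincide by Proposition \ref{prbardef}.

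For the construction, given $v=\sum_{i=1}^n f_i\nabla g_i\in\TestV(\XX)$ (with $f_i\in\Ss\cap\Lpi$, $g_i\in\TestF(\XX)$) I would set $\qcrbar(v)\defeq\sum_{i=1}^n\qcr(f_i)\,\nablatilde g_i\in\TestVbar(\XX)\subseteq\Cqcvf$. Using the two defining properties of $\Prbar$ one computes $\Prbar(\qcrbar(v))=\sum_i\Pr(\qcr(f_i))\Prbar(\nablatilde g_i)=\sum_i f_i\nabla g_i=v$, so two representations of the same $v$ give elements of $\Cqcvf$ with the same $\Prbar$-image and hence, by injectivity of $\Prbar$ on $\Cqcvf$, the same element; thus $\qcrbar$ is well defined, visibly linear, and satisfies $\Prbar\circ\qcrbar=\Id$ on $\TestV(\XX)$. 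For the norm identity on $\TestV(\XX)$: here $\abs{v}\in\Ss\cap\Lpi$, so $\qcr(\abs{v})$ is defined with $\Pr(\qcr(\abs{v}))=\abs{v}$ $\mass$-a.e., while the last assertion of Proposition \ref{prbardef} gives $\Pr(\abs{\qcrbar(v)})=\abs{\Prbar(\qcrbar(v))}=\abs{v}$ $\mass$-a.e.; since $\abs{\qcrbar(v)}$ (the pointwise norm of an element of $\Cqcvf$) and $\qcr(\abs{v})$ are both quasi-continuous and $\Pr$ is injective on quasi-continuous functions (\cite[Proposition 1.18]{debin2019quasicontinuous}), one concludes $\abs{\qcrbar(v)}=\qcr(\abs{v})$ $\capa$-a.e.

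To extend, I would use that $\TestV(\XX)$ is dense in $\WHCSs$ and that for $u\in\WHCSs$ one has $\abs{u}\in\HSs$ with $\abs{\nabla\abs{u}}\le\abs{\nabla u}$ $\mass$-a.e. (Kato-type inequality), so $\Vert\abs{u}\Vert_{\HSs}\le\Vert u\Vert_{\WHCSs}$. Then, for $v,v'\in\TestV(\XX)$, linearity and the norm identity yield $\abs{\qcrbar(v)-\qcrbar(v')}=\qcr(\abs{v-v'})$ $\capa$-a.e., and since $\qcr\colon\HSs\to\Lp^0(\capa)$ is continuous (convergence in $\HSs$ forces convergence in capacity, via a capacitary Chebyshev inequality), $\qcrbar$ is uniformly continuous on $\TestV(\XX)$ for the $\tanXcap$-distance and extends uniquely to a continuous linear map $\qcrbar\colon\WHCSs\to\tanXcap$. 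Closedness of $\Cqcvf$ gives i); passing to the limit in $\Prbar\circ\qcrbar=\Id$ — with $\Prbar$ continuous as a map $\tanXcap\to\tanXzero$ and the approximating $v_n$ converging to $v$ also in $\Lp^0(T\XX)$ — gives ii); and, for any $u\in\WHCSs$, $\abs{\qcrbar(u)}$ is quasi-continuous with $\Pr(\abs{\qcrbar(u)})=\abs{u}=\Pr(\qcr(\abs{u}))$ $\mass$-a.e., so injectivity of $\Pr$ on quasi-continuous functions upgrades this to $\abs{\qcrbar(u)}=\qcr(\abs{u})$ $\capa$-a.e. The continuity of $\qcrbar$ is then a consequence of this identity together with the continuity of $\qcr$.

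The step I expect to be the main obstacle is the bookkeeping between $\mass$-a.e. and $\capa$-a.e. identities: the pointwise norm identity is only directly accessible $\mass$-a.e. through Proposition \ref{prbardef} and must be promoted to a $\capa$-a.e. statement via the injectivity of $\Pr$ on quasi-continuous functions, while the continuity underlying the density argument rests both on the Kato-type inequality $\abs{\nabla\abs{u}}\le\abs{\nabla u}$ and on the continuity of the quasi-continuous-representative map on $\HSs$.
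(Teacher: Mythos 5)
Your argument is correct and follows the standard construction: this statement is quoted in the paper from \cite[Theorem 2.14 and Proposition 2.13]{debin2019quasicontinuous} without proof, and your route (define $\qcrbar$ on $\TestV(\XX)$ via $\sum_i\qcr(f_i)\nablatilde g_i$, get well-posedness and the norm identity from the injectivity of $\Prbar$ and $\Pr$ on quasi-continuous objects, then extend by density using the Kato-type inequality $\abs{\nabla\abs{u}}\le\abs{\nabla u}$ and the continuity of $\qcr$ in capacity) is essentially the proof given there. The only implicit input worth flagging is that the pointwise norm of an element of $\Cqcvf$ is itself a quasi-continuous function — needed to invoke the injectivity of $\Pr$ — which for elements of $\TestVbar(\XX)$ follows by polarization from $\abs{\nablatilde g}=\qcr(\abs{\nabla g})$ and in general by passing to quasi-uniform limits.
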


We will often omit to write the $\qcrbar$ operator for simplicity of notation (but it will be clear from the context when we need the fine representative). This should cause no ambiguity thanks to the fact that 
\begin{equation}\label{qcrfactorizes}
	\qcrbar(g v)=\qcr(g)\qcrbar(v) \qquad\text{for every }g\in\Ss\cap\Lpi\text{ and } v\in\WHCSs\cap\tanXinf.
\end{equation}
This can be proved easily by locality and using the fact that the continuity of the map $\qcr$ implies that $\qcr(g) \qcrbar(v)$ as above is quasi-continuous and the injectivity of the map $\Prbar$ restricted the set of quasi-continuous vector fields yields the conclusion.

The following theorem, that is {\cite[Section 1.3]{bru2019rectifiability}}, will be crucial in the construction of modules tailored to particular measures (see \cite[Theorem 3.10]{BGBV} for an explicit proof of this result).
\begin{thm}
	\label{finemodule}
	Let $(\XX,\dist,\mass)$ be a metric measure space and let $\mu$ be a Borel measure finite on balls such that $\mu\ll\capa$. Let also $\MM$ be a $\Lpc$-normed $\Lpc$-module. Define the natural (continuous) projection 
	\begin{equation}\notag
		\pi_\mu:\Lpc\rightarrow\Lp^0(\mu).
	\end{equation}
	We define an equivalence relation $\sim_\mu$ on $\MM$ as 
	\begin{equation}\notag
		v\sim_\mu w \text{ if and only if } \abs{v-w}=0 \qquad \mu\text{-a.e.}
	\end{equation} 
	Define the quotient module $\MM_{\mu}^0\defeq{\MM}/{\sim_\mu}$ with the natural  (continuous) projection
	\begin{equation}\notag
		\pibar_\mu:\MM\rightarrow\MM_{\mu}^0.
	\end{equation}
	Then $\MM_{\mu}^0$ is a $\Lp^0(\mu)$-normed $\Lp^0(\mu)$-module, with the pointwise norm and product induced by the ones of $\MM$: more precisely, for every $v\in\MM$ and $g\in\Lpc$,
	\begin{equation}\label{defnproj}
		\begin{cases}
			\abs{\pibar_\mu(v)}\defeq\pi_\mu(\abs{v}),\\
			\pi_\mu(g)\pibar_\mu(v)\defeq\pibar_\mu( g v).
		\end{cases}
	\end{equation}
	
	If $p\in[1,\infty]$, we set
	\begin{equation}\notag
		\MM^{p}_{\mu}\defeq\left\{ v\in\MM^0_{\mu}:\abs{v}\in\Lp^p(\mu)\right\},
	\end{equation}
	that is a $\Lp^p(\mu)$-normed $\Lp^\infty(\mu)$-module.
	Moreover, if $\MM$ is a Hilbert module, also $\MM_\mu^0$ and $\MM_\mu^2$ are Hilbert modules. 
\end{thm}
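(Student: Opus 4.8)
The plan is to verify, in order: (i) that the algebraic operations and the pointwise norm descend to the quotient $\MM/\!\sim_\mu$; (ii) that the resulting object is an $\Lp^0(\mu)$-normed $\Lp^0(\mu)$-module, the nontrivial point being \emph{completeness}; and (iii) the two refinements, concerning $\MM^p_\mu$ and the Hilbert case. The main obstacle is (ii): an $\MM$-representative of a class in $\MM^0_\mu$ is pinned down only up to $\mu$-null sets, which need not be $\capa$-null, so a Cauchy sequence in $\MM^0_\mu$ admits in general no Cauchy lift to $\MM$ and completeness will have to be obtained by a localization argument, not read off formally.

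For (i), I would first recall that since $\mu\ll\capa$ the natural map $\pi_\mu\colon\Lpc\to\Lp^0(\mu)$ is a well-defined, continuous, \emph{surjective} homomorphism of topological rings (surjectivity because every element of $\Lp^0(\mu)$ has a Borel, hence $\capa$-measurable, representative). Then the prescriptions \eqref{defnproj} are unambiguous on $\MM/\!\sim_\mu$: if $v\sim_\mu w$ then $\big|\abs{v}-\abs{w}\big|\le\abs{v-w}=0$ $\mu$-a.e., giving $\pi_\mu(\abs{v})=\pi_\mu(\abs{w})$; $\sim_\mu$ is compatible with addition; and $\pi_\mu(g)\pibar_\mu(v)\defeq\pibar_\mu(gv)$ is well posed because $\abs{g_1v_1-g_2v_2}\le\abs{g_1}\abs{v_1-v_2}+\abs{g_1-g_2}\abs{v_2}$ vanishes $\mu$-a.e.\ as soon as $\pi_\mu(g_1)=\pi_\mu(g_2)$ and $v_1\sim_\mu v_2$, every scalar of $\Lp^0(\mu)$ being of the form $\pi_\mu(g)$. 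All module identities and the three defining properties of a pointwise norm (definiteness, subadditivity, homogeneity) then follow by applying the ring homomorphism $\pi_\mu$ to the corresponding (in)equalities valid in $\MM$; I would spell out only homogeneity, $\abs{\pi_\mu(g)\pibar_\mu(v)}=\pi_\mu(\abs{gv})=\pi_\mu(\abs{g})\pi_\mu(\abs{v})=\abs{\pi_\mu(g)}\,\abs{\pibar_\mu(v)}$.

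For completeness, fix finite Borel measures $\tilde\mu\sim\mu$ (recall $\mu$ is finite on balls) and $\tilde\capa\sim\capa$ inducing the two $\Lp^0$-topologies. Given a Cauchy sequence in $\MM^0_\mu$, extract a subsequence $\{v_n\}_n$ with $\tilde\mu(\{\abs{v_{n+1}-v_n}>2^{-n}\})\le2^{-n}$; set $A_n\defeq\{\abs{v_{n+1}-v_n}\le2^{-n}\}$ and $G_n\defeq\bigcap_{j\ge n}A_j$, so $G_n\uparrow$ and, by Borel--Cantelli, $\mu(\XX\setminus\bigcup_nG_n)=0$; then let $P_1\defeq G_1$, $P_n\defeq G_n\setminus G_{n-1}$ for $n\ge2$, and $P_0\defeq\XX\setminus\bigcup_{n\ge1}P_n$, a Borel partition with $\mu(P_0)=0$. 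The key point is that \emph{after multiplying by $\chi_{G_n}$ the exceptional sets become $\mu$-null}: for $j\ge n$ one has $\chi_{G_n}\abs{v_{j+1}-v_j}\le2^{-j}$ $\mu$-a.e., so choosing (by surjectivity of $\pibar_\mu$) any $\MM$-representative $\bar w_j$ of $\chi_{G_n}(v_{j+1}-v_j)$, the set $\{\abs{\bar w_j}>2^{-j}\}$ is $\mu$-null, and hence $w_j\defeq\chi_{\{\abs{\bar w_j}\le2^{-j}\}}\bar w_j$ \emph{still represents} $\chi_{G_n}(v_{j+1}-v_j)$ while satisfying $\abs{w_j}\le2^{-j}$ $\capa$-a.e. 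Since $\sum_j\abs{w_j}$ then converges in $\Lpc$, completeness of $\MM$ produces $z_n\defeq r_n+\sum_{j\ge n}w_j\in\MM$, with $r_n$ any $\MM$-representative of $\chi_{G_n}v_n$; telescoping and the continuity of $\pibar_\mu$ give $\chi_{G_n}v_j=\pibar_\mu(r_n+\sum_{i=n}^{j-1}w_i)\to\pibar_\mu(z_n)=:y_n$ in $\MM^0_\mu$ as $j\to\infty$. Finally, using the gluing property of the complete module $\MM$, put $v^*\defeq\sum_{n\ge1}\chi_{P_n}z_n\in\MM$ (the partial sums are $\MM$-Cauchy since $\tilde\capa(\bigcup_{n>N}P_n)\to0$) and $v\defeq\pibar_\mu(v^*)$; then $\chi_{P_n}v=\pibar_\mu(\chi_{P_n}z_n)=\chi_{P_n}y_n=\lim_j\chi_{P_n}v_j$ for each $n$, and splitting $\dist_{\MM^0_\mu}(v_j,v)$ over $\{P_n\}_{n\ge0}$ — a finite block on which $v_j\to v$, a tail of total $\tilde\mu$-mass $<\varepsilon$, and the $\mu$-null $P_0$ — yields $v_j\to v$; the original Cauchy sequence then converges to $v$ as well. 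As anticipated, this localization onto the $G_n$, where the relevant exceptional sets turn $\mu$-null so that truncating representatives is harmless, combined with the gluing property of $\MM$, is the crux of the argument.

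Finally, $\MM^p_\mu=\{v\in\MM^0_\mu:\abs{v}\in\Lp^p(\mu)\}$ is readily seen to be an $\Lp^\infty(\mu)$-submodule of $\MM^0_\mu$ on which $v\mapsto\Vert\abs{v}\Vert_{\Lp^p(\mu)}$ is a norm, and it is complete by the standard argument (a Cauchy sequence converges in $\MM^0_\mu$, and Fatou applied along an a.e.-convergent subsequence of the pointwise norms upgrades this to $\Lp^p(\mu)$-convergence); and if $\MM$ is a Hilbert module, applying the ring homomorphism $\pi_\mu$ to the parallelogram identity $\abs{v+w}^2+\abs{v-w}^2=2\abs{v}^2+2\abs{w}^2$, valid $\capa$-a.e.\ in $\MM$, yields the same identity $\mu$-a.e.\ in $\MM^0_\mu$, hence in $\MM^2_\mu$, so both are Hilbert modules.
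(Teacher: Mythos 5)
The paper does not actually prove this statement: it is quoted from \cite{bru2019rectifiability} with a pointer to \cite[Theorem 3.10]{BGBV} for an explicit proof, so there is no internal argument to compare yours against. Taken on its own terms, your treatment of well-posedness, of the module axioms, of $\MM^p_\mu$ and of the Hilbert case is fine, and the genuinely delicate idea — localizing via Borel--Cantelli onto the sets $G_n$ and then \emph{truncating} the $\MM$-representatives of the increments so that the $\mu$-a.e.\ bound $2^{-j}$ is upgraded to a $\capa$-a.e.\ bound, after which $\sum_j w_j$ converges absolutely in $\MM$ — is correct and is exactly the kind of step any proof of this theorem must contain.

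There is, however, a genuine gap at the final gluing. You write that the partial sums of $\sum_n\chi_{P_n}z_n$ are Cauchy in $\MM$ ``since $\tilde\capa\big(\bigcup_{n>N}P_n\big)\to0$'', having fixed ``a finite Borel measure $\tilde\capa\sim\capa$ inducing the $\Lp^0(\capa)$-topology''. No such measure exists: $\capa$ is a submodular set function, not a measure, the topology of $\Lp^0(\capa)$ is convergence \emph{in capacity} on bounded sets, and this is strictly finer than convergence in measure for any Borel measure (already on $\RR$, the indicators of the tails of an enumeration of $\mathbb{Q}\cap[0,1]$ tend to $0$ in any finite measure but stay at capacity $\geq\capa(\{\mathrm{pt}\})>0$). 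What your construction actually controls is $\tilde\mu\big(\bigcup_{n>N}P_n\big)\to0$; the sets $\bigcup_{n>N}P_n$ decrease with $\mu$-null intersection, but $\capa$ is not continuous from above along decreasing Borel sets, so $\capa\big(\bigcup_{n>N}P_n\cap B\big)$ need not tend to $0$ and the partial sums need not be Cauchy in $\MM$ (note also that $|z_n|$ is not uniformly controlled, since $|r_n|$ carries $\chi_{G_n}|v_n|$). The existence of $v^\ast=\sum_n\chi_{P_n}z_n$ therefore cannot be obtained as a limit of partial sums; it must be taken from the \emph{gluing property} of the $\Lp^0(\capa)$-normed module $\MM$, which — unlike for $\Lp^0$ of a $\sigma$-finite measure — is not an automatic consequence of completeness and has to be imported from the definition/construction of capacitary modules in \cite{debin2019quasicontinuous}. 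You do name ``the gluing property of the complete module $\MM$'', but the reason you give for it is the incorrect Cauchy argument; as written the step would fail, and since every other route to completeness of $\MM^0_\mu$ I can see reduces to precisely this gluing (an $\MM^0_\mu$-class must be represented by an actual element of $\MM$), this is the one point of the proof that needs a correct justification rather than a cosmetic fix.
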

Similarly as for $\qcrbar$, we often omit to write the $\pibar_\mu$ operator (and also the $\pi_\mu$ operator) for simplicity of notation (but it will be clear from the context when we need the fine representative). Again, this should cause no ambiguity thanks to \eqref{qcrfactorizes} and \eqref{defnproj}.

The following lemma, that is  \cite[Lemma 2.7]{bru2019rectifiability}, provides us with the density of test vector fields in quotient tangent modules.
\begin{lem}\label{densitytestbargen}
	Let $(\XX,\dist,\mass)$ be an $\RCD(K,\infty)$ space and let $\mu$ be a finite Borel measure such that $\mu\ll\capa$. Then $\TestV_\mu(\XX)$ is dense in $\tanbvXp{p}{\mu}$ for every $p\in[1,\infty)$.
\end{lem}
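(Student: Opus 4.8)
The plan is to reduce the claim, through a chain of truncations, to the density of boundedly supported Lipschitz functions in $\Lp^p(\mu)$, which in turn follows from the inner/outer regularity of finite Borel measures on metric spaces.

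\emph{Reduction to bounded finite combinations.} Fix $v\in\tanbvXp{p}{\mu}$ and lift it to some $\hat v\in\tanXcap$ via the (surjective) projection $\pibar_\mu$. Truncating $\hat v$ inside the module, i.e.\ replacing $\hat v$ by $(1\wedge(k/\abs{\hat v}))\hat v$, and using that $\mu$ is finite together with dominated convergence, we may assume $\abs{\hat v}\le k$ $\capa$-a.e. By Theorem \ref{tancapa}(ii) we may pick glued elements $w_j=\sum_n\chi_{E_n^j}\nablatilde f_n^j\to\hat v$ in $\tanXcap$; applying the pointwise level-$2k$ truncation $\Pi_{2k}$ (which is $1$-Lipschitz at the level of pointwise norms, is compatible with the glueing, and fixes $\hat v$ since $\abs{\hat v}\le k$) we may take the approximants of the form $w=\sum_n\chi_{E_n}h_n\nablatilde f_n$ with $h_n\defeq\psi\circ\abs{\nabla f_n}$, $\psi(t)\defeq 1\wedge(2k/t)$; here $h_n\in\HSs\cap\Lpi$ with $0\le h_n\le 1$ by \cite[Lemma 2.5]{debin2019quasicontinuous} (recalling $\abs{\nabla f_n}\in\HSs\cap\Lpi$ for test functions), and $\abs{h_n\nablatilde f_n}\le\Lip(f_n)$. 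Since $\abs{w}\le 2k$ and $\mu$ is finite, convergence in $\tanXcap$ (hence $\mu$-a.e.\ along a subsequence) improves, by dominated convergence, to $\pibar_\mu(w)\to v$ in $\tanbvXp{p}{\mu}$, and the tail $\sum_{n>N}\chi_{E_n}h_n\nablatilde f_n$ has $\Lp^p(\mu)$-norm $\le 2k\,\mu\big(\bigcup_{n>N}E_n\big)^{1/p}\to 0$. We are thus left to approximate, in $\tanbvXp{p}{\mu}$, a single $\pibar_\mu(\chi_E\,h\,\nablatilde f)$ with $E$ Borel, $h\in\HSs\cap\Lpi$, $f\in\TestF(\XX)$.

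\emph{Replacing the Borel cutoff.} Since $\mu$ is a finite Borel measure on a metric space it is inner and outer regular, and since it is finite we may also discard the complement of a large ball; a standard Urysohn-type construction then produces, for every $\varepsilon>0$, a function $\phi\in\LIPbs(\XX)$ with $0\le\phi\le 1$ and $\norm{\phi-\chi_E}_{\Lp^p(\mu)}<\varepsilon$. Recalling that $\mass$ is finite on bounded sets one has $\LIPbs(\XX)\subseteq\Ss\cap\Lpi$, and then $\phi h\in\Ss\cap\Lpi$ by the Leibniz rule ($h\,\mathrm D\phi\in\Lpt$ since $\mathrm D\phi$ is bounded with bounded support and $h\in\Lpi$; $\phi\,\mathrm Dh\in\Lpt$ since $\phi$ is bounded with bounded support and $\mathrm Dh\in\Lpt$), with $\phi h$ continuous, so $\qcr(\phi h)=\phi h$. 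Hence $\qcr(\phi h)\nablatilde f\in\TestVbar(\XX)$, whence $\pibar_\mu(\qcr(\phi h)\nablatilde f)\in\TestV_\mu(\XX)$, and, using $\abs{\nablatilde f}=\qcr\abs{\nabla f}\le\Lip(f)$ $\capa$-a.e.,
$$
\norm{\pibar_\mu\!\big(\chi_E h\,\nablatilde f-\qcr(\phi h)\nablatilde f\big)}_{\Lp^p(\mu)}\le\Lip(f)\,\norm{h}_{\Lpi}\,\norm{\phi-\chi_E}_{\Lp^p(\mu)}<\Lip(f)\,\norm{h}_{\Lpi}\,\varepsilon .
$$
Chaining the two steps gives the asserted density.

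\emph{Main obstacle.} The essential point is the mismatch between the generators $\sum_n\chi_{E_n}\nablatilde f_n$ of $\tanXcap$, whose Borel cutoffs $\chi_{E_n}$ are not quasi-continuous — so these elements lie neither in $\Cqcvf$ nor in $\TestVbar(\XX)$ — and the target space $\TestV_\mu(\XX)$. The argument works because, \emph{after} projecting onto a finite measure $\mu\ll\capa$, a Borel set can be replaced up to small $\mu$-error by a boundedly supported Lipschitz (hence quasi-continuous, hence admissible) coefficient; the remaining work is the careful bookkeeping of truncations that upgrades the native $\Lp^0(\capa)$-convergence coming from Theorem \ref{tancapa} to $\Lp^p(\mu)$-convergence, which is exactly where finiteness of $\mu$ and local finiteness of $\mass$ are used.
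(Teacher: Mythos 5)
Your argument is correct, and it is essentially the standard proof of this statement (the paper itself gives no proof, deferring to \cite[Lemma 2.7]{bru2019rectifiability}): truncate, approximate by glued elements $\sum_n\chi_{E_n}\nablatilde f_n$ via Theorem \ref{tancapa}, kill the tail using finiteness of $\mu$, and replace the Borel cut-offs by Lipschitz ones using the regularity of the finite Borel measure $\mu$ together with $\mu\ll\capa$. Two harmless slips: $h_n=\psi\circ\abs{\nabla f_n}$ need not lie in $\HSs$ when $\mass(\XX)=\infty$ (since $\psi(0)=1$), but it does lie in $\Ss\cap\Lpi$, which is all that the definition of $\TestVbar(\XX)$ requires; and $\phi h$ is not continuous (only $\phi$ is), but what you actually need is $\qcr(\phi h)=\phi\,\qcr(h)$ $\capa$-a.e., which holds by the injectivity of $\Pr$ on quasi-continuous functions.
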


In what follows, with a little abuse, we often write, for $v\in\tanXcap$, $v\in D(\dive)$ if and only if $\Prbar (v)\in D(\dive)$ and, if this is the case, $\dive\, v=\dive(\Prbar(v))$. Similar notation will be used for other operators acting on subspaces of $\tanXzero$. 

\medskip

\subsubsection{Tensor product}\label{subsect1}
In this subsection we study the tensor product of normed modules. We focus on $\capa$-modules, as in these spaces we are going to find the main objects of this note. Fix then $n\in\NN$, $n\ge 1$.  We assume that $(\XX,\dist,\mass)$ is an $\RCD(K,\infty)$ space, even though this is clearly not always needed.
Just for the sake of notation, we set $$\mathrm{L}^2(T^{\otimes n}\XX)\defeq\tanX^{\otimes n}.$$

Let now $\mathcal C\subseteq\tanX$ be a subspace. We define $\mathcal C^{\otimes n}$ as the vector subspace of the Hilbert $\Lpt$-normed $\Lpi$-module $\mathrm{L}^2(T^{\otimes n}\XX)$ that consists of finite sums of decomposable tensors of the type $v_1\otimes\cdots\otimes v_n$ where $v_1,\dots,v_n\in \mathcal C$, endowed with the structure of module (included the pointwise norm) induced by the one of $\mathrm{L}^2(T^{\otimes n}\XX)$.
Notice that we can equivalently define $\mathcal C^{\otimes n}$ as follows. First, we consider the multilinear map $$(v_1,\dots,v_n)\mapsto v_1\otimes\cdots\otimes v_n \in\mathrm{L}^2(T^{\otimes n}\XX)\qquad\text{if }v_1\dots,v_n\in\mathcal C$$
that factorizes to a well defined linear map
$$\mathcal C\otimes^{\mathrm {alg}}_\RR\cdots\otimes^{\mathrm {alg}}_\RR\mathcal C\rightarrow \mathrm{L}^2(T^{\otimes n}\XX)$$
and see that $\mathcal C^{\otimes n}$ coincides with the image of this map.
Notice that, unless we are in pathological cases, the map we have just defined is not injective. This is equivalent to the fact that not every map defined on $\mathcal C\otimes^{\mathrm {alg}}_\RR\cdots\otimes^{\mathrm {alg}}_\RR\mathcal C$ induces a map defined on $\mathcal C^{\otimes n}$, in general.

\medskip

Let now $\MM$ be an Hilbert $\Lpc$-normed $\Lpc$-module. We define the $\Lpc$-normed $\Lpc$-module $\MM^{\otimes n}$ repeating the construction done to define the tensor product of $\Lp^0(\mass)$-normed $\Lp^0(\mass)$-modules in \cite[Subsection 3.2.2]{GP19} (originally of \cite{Gigli14}), that is endowing the algebraic tensor product $$\MM\otimes^\textrm{alg}_{\Lpc}\cdots\otimes^\textrm{alg}_{\Lpc}\MM$$ with the pointwise Hilbert-Schmidt norm and then taking the completion with respect to the induced distance.

If $\mu$ is a Borel measure finite on balls and such that $\mu\ll\capa$,  we set $$\mathrm{L}_\mu^p(T^{\otimes n}\XX)\defeq \tanbvXp{p}{\mu}^{\otimes n}\qquad\text{for }p\in\{0\}\cup[1,\infty],$$
where the right hand side is given by Theorem \ref{finemodule}.


\begin{rem}\label{remcompatibility}
	Let $\mu$ be a Borel measure, finite on balls, such that $\mu\ll\capa$. Let also $\MM$ be an Hilbert $\Lpc$-normed $\Lpc$-module.
	Then, using the notation of Theorem \ref{finemodule}, we have a canonical isomorphism
	\begin{equation}\notag
		(\MM^{\otimes n})^0_\mu\cong(\MM^0_\mu)^{\otimes n}.
	\end{equation} 
	This isomorphism is obtained using the map induced by the well defined multilinear map
	$$(\MM^0_\mu)^n\ni ([v_1]_{\sim_\mu},\dots, [v_n]_{\sim_\mu})\mapsto [(v_1\otimes\dots\otimes v_n)]_{\sim_\mu}\in(\MM^{\otimes n})^0_\mu$$
	and noticing that such map turns out to be an isometry with dense image between complete spaces.
	Therefore we also have the canonical inclusion
	\begin{equation}\notag
		(\MM^{\otimes n})^p_\mu\cong\left\{v\in (\MM^0_\mu)^{\otimes n}:\abs{v}\in\Lp^p(\mu)\right\}\qquad\text{if }p\in[1,\infty].
	\end{equation} 
	In particular, with the obvious interpretation for 	$\mathrm{L}^0_\capa(T^{\otimes n}\XX)$,
	\begin{equation}\notag
		(\mathrm{L}^0_\capa(T^{\otimes n}\XX))^0_\mu\cong
		\mathrm{L}_\mu^0(T^{\otimes n}\XX)
	\end{equation} 
	so that
	\begin{equation}\notag
		(\mathrm{L}_\capa^0(T^{\otimes n}\XX))^p_\mu\cong\mathrm{L}_\mu^p(T^{\otimes n}\XX)\qquad\text{if }p\in[1,\infty],
	\end{equation} 
	where $\mathrm{L}_\mu^p(T^{\otimes n}\XX)\defeq\left\{v\in \mathrm{L}_\mu^0(T^{\otimes n}\XX):\abs{v}\in\Lp^p(\mu)\right\}$.\fr
\end{rem}

\medskip

Now we consider $\TestV(\XX)^{\otimes n}$. Notice that $\TestV(\XX)^{\otimes n}$  is a module over the ring $\Ss\cap\Lpi$ in the algebraic sense and, by Lemma \ref{normsobo} below, 
\begin{equation}\notag
	\frac{1}{1\vee|v|}v\in\TestV(\XX)^{\otimes n}\qquad\text{for every }v\in\TestV(\XX)^{\otimes n}.
\end{equation}
By the following lemma (with $\mu=\mass$), $\TestV(\XX)^{\otimes n}$ is dense in ${\rm L}^2(T^{\otimes n}\XX)$, in particular, $\TestV(\XX)^{\otimes n}$ generates in the sense of modules ${\rm L}^2(T^{\otimes2}\XX)$. The following lemma is proved with an approximation argument as in \cite[Lemma 3.2.21]{GP19}, and is a generalization of Lemma \ref{densitytestbargen}.
\begin{lem}\label{densitytestbargentens}
	Let $(\XX,\dist,\mass)$ be an $\RCD(K,\infty)$ space and let $\mu$ be a finite Borel measure such that $\mu\ll\capa$. Then
	$\TestV(\XX)^{\otimes n}$
	is dense in $\mathrm{L}_\mu^p(T^{\otimes n}\XX)$ for every $p\in[1,\infty)$.
\end{lem}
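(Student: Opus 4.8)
The plan is to follow the approximation scheme of \cite[Lemma 3.2.21]{GP19}, reducing from tensors of arbitrary degree to degree one (where the statement is Lemma \ref{densitytestbargen}) and from a general exponent $p\in[1,\infty)$ to the Hilbertian exponent $p=2$ by a truncation argument. First I would recall that by Remark \ref{remcompatibility} we may identify $\mathrm{L}_\mu^p(T^{\otimes n}\XX)$ with $\{v\in(\mathrm{L}^0_\capa(T\XX))^{\otimes n}_\mu:\abs v\in\Lp^p(\mu)\}$, and that $\TestV(\XX)^{\otimes n}$ sits inside it via the projections $\pibar_\mu$ composed with the quasi-continuous-representative maps (which, as noted around \eqref{qcrfactorizes} and \eqref{defnproj}, are compatible with products by functions in $\Ss\cap\Lpi$). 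So it suffices to show that the $\Lp^p(\mu)$-closure of $\TestV(\XX)^{\otimes n}$ contains a generating set of $\mathrm{L}_\mu^p(T^{\otimes n}\XX)$, namely the decomposable tensors $w_1\otimes\cdots\otimes w_n$ with $w_i\in\tanXcap$, together with their bounded Borel combinations.

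The key steps, in order, are as follows. Step one: since $\mathrm{L}_\mu^0(T^{\otimes n}\XX)$ is generated as a module by decomposable tensors of elements of $\tanXcap$ and each element of $\tanXcap$ is, by Lemma \ref{densitytestbargen} (applied degree by degree) and a diagonal argument, an $\Lp^0(\mu)$-limit of elements of $\TestVbar(\XX)$, one shows that any $v_1\otimes\cdots\otimes v_n$ with $v_i\in\TestVbar(\XX)$ lies in the closure; multilinearity of the tensor product together with the product rule \eqref{qcrfactorizes} reduces this to the single-factor density, using the pointwise Hilbert--Schmidt norm estimate $\abs{v_1\otimes\cdots\otimes v_n}=\prod_i\abs{v_i}$ to control the errors when one factor is replaced at a time. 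Step two: handle the normalization, i.e. pass from $\Lp^\infty$-bounded tensors to general ones. Given $v\in\mathrm{L}_\mu^p(T^{\otimes n}\XX)$, the truncations $v_k\defeq\chi_{\{\abs v\le k\}}v$ converge to $v$ in $\Lp^p(\mu)$ by dominated convergence (here $p<\infty$ is used), and each $v_k$ is a bounded element; for bounded $v_k$ one uses that $\TestV(\XX)^{\otimes n}$ is a module over $\Ss\cap\Lpi$ closed under the operation $w\mapsto \frac1{1\vee\abs w}w$, so an approximating sequence can be truncated in the pointwise norm without leaving $\TestV(\XX)^{\otimes n}$, converting $\Lp^2(\mu)$-convergence (from the $p=2$ case) into $\Lp^p(\mu)$-convergence against the common $\Lp^\infty$ bound via interpolation/dominated convergence on the finite measure $\mu$. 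Step three: the case $p=2$, $v$ bounded, is exactly the density statement in the Hilbert module $\mathrm{L}_\mu^2(T^{\otimes n}\XX)$, which follows from Step one combined with the fact (Theorem \ref{finemodule}) that $\mathrm{L}_\mu^2$ is a Hilbert module whose generating set is the image of the generators of $\tanXcaptwo$-type tensors, together with finiteness of $\mu$.

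The main obstacle I expect is the bookkeeping in Step one: the generators of $\mathrm{L}_\mu^0(T^{\otimes n}\XX)$ are not literally decomposable tensors of \emph{test} vector fields but of arbitrary fine vector fields, and the module structure is over $\Lp^0(\mu)$ while $\TestV(\XX)^{\otimes n}$ is only a module over $\Ss\cap\Lpi$; so one must carefully interleave (a) approximating each fine factor by test vector fields in $\Lp^0(\capa)$ hence in $\Lp^0(\mu)$, (b) truncating in pointwise norm to stay bounded, and (c) using $\Lp^\infty$-bounded Borel functions (approximated by $\Ss\cap\Lpi$ functions, which are $\Lp^0(\mu)$-dense in $\Lp^\infty(\mu)$ since $\mu\ll\capa$ and $\mu$ is finite) as coefficients, all while keeping the pointwise Hilbert--Schmidt norms uniformly bounded so that the errors, which are products of norms with at most one small factor, tend to $0$ in $\Lp^p(\mu)$. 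None of these steps is deep, but making the simultaneous approximation rigorous — essentially reproducing \cite[Lemma 3.2.21]{GP19} in the $\capa$-module and then quotient-module setting — is where the real work lies. I would present it as: first the $\mu=\mass$, $p=2$, bounded case by direct density; then bounded $\Lp^p$ by the $\frac1{1\vee\abs\cdot}$-truncation trick; then general $v$ by dominated convergence; and finally remark that the passage from $\mass$ to a general finite $\mu\ll\capa$ is transparent because all the constructions (the $\qcrbar$, $\pibar_\mu$ maps and the tensor product) are compatible by Remark \ref{remcompatibility}.
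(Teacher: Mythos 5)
Your proposal is correct and follows essentially the same route as the paper's proof: reduce to decomposable tensors with $\Lp^\infty$-bounded factors via truncation/normalization and dominated convergence on the finite measure $\mu$, then iterate the single-factor density of Lemma \ref{densitytestbargen}, replacing one factor at a time while controlling the pointwise Hilbert--Schmidt norms. The extra detour through the $p=2$ case and the $\tfrac{1}{1\vee\abs{\cdot}}$-truncation is harmless but not needed; the paper instead normalizes the approximating sequence directly so that dominated convergence applies for every $p\in[1,\infty)$ at once.
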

\begin{proof}
	Fix $p\in[1,\infty)$ and $w\in\mathrm{L}_\mu^p(T^{\otimes n}\XX)$. This is to say that $\abs{w}\in\Lp^p(\mu)$ and we can find a sequence of tensors $$\{w^k\}_k\subseteq \tanbvXzero{\mu}\otimes^\textrm{alg}_{\Lp^0(\mu)}\cdots\otimes^\textrm{alg}_{\Lp^0(\mu)}\tanbvXzero{\mu}$$ such that $\abs{w-w^k}\rightarrow 0$ in $\Lp^0(\mu)$. We can assume that $\{|{w^k}|\}_k\subseteq\Lp^p(\mu)$ is a bounded sequence, up to replacing $w^k$ with $\frac{\abs{w}}{|{w^k}|}w^k$. In this case, by dominated convergence, we see that $w^k\rightarrow w$ in $\mathrm{L}_\mu^p(T^{\otimes n}\XX)$. As a consequence of this discussion, an orthonormalization procedure and a truncation argument, we see that we can reduce ourselves to the case $w=w_1\otimes\cdots \otimes w_n$ where $w_i\in\mathrm{L}_\mu^\infty(T^{\otimes n}\XX)$ for every $i$. Then we can conclude iterating Lemma \ref{densitytestbargen}.
\end{proof}

\begin{lem}\label{normsobo}
	Let $(\XX,\dist,\mass)$ be an $\RCD(K,\infty)$ space and let $v\in(\WHCSs\cap\Lpi)^{\otimes n}$. Then $\abs{v}\in\HSs\cap\Lpi$.
\end{lem}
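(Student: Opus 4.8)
The plan is to reduce the statement to the scalar case via the product/Leibniz-type rule for the norm of a tensor and then to invoke the closure of the Sobolev structure. First I would recall that for $v = v_1 \otimes \cdots \otimes v_n$ with each $v_i \in \WHCSs \cap \tanXinf$ one has the pointwise identity $|v| = |v_1| \cdots |v_n|$ $\mass$-a.e. (this is just the definition of the Hilbert--Schmidt norm on decomposable tensors). Since each $v_i \in \WHCSs$, by the results on covariant Sobolev vector fields (recalled implicitly in the preliminaries, e.g. the fact that $|w| \in \HSs$ with $|\nabla |w|| \le |\nabla w|$ $\mass$-a.e. for $w \in \WHCSs$) we get $|v_i| \in \HSs \cap \Lpi$; the $\Lpi$ bound is immediate from $v_i \in \tanXinf$. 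Then $|v|$ is a product of finitely many functions in $\HSs \cap \Lpi$, and since $\HSs \cap \Lpi$ is an algebra under multiplication (the Leibniz rule for minimal weak upper gradients, combined with the $\Lpi$ bounds to control the cross terms), we conclude $|v| \in \HSs \cap \Lpi$.

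Next I would pass from decomposable tensors to general $v \in (\WHCSs \cap \Lpi)^{\otimes n}$. By definition of the tensor product of modules, such a $v$ is a finite sum $v = \sum_{j=1}^m v_1^j \otimes \cdots \otimes v_n^j$ with $v_i^j \in \WHCSs \cap \Lpi$. The subtlety here is that $|v|$ is \emph{not} a linear combination of the norms of the summands, so I cannot simply add the previous conclusions. Instead I would argue as follows: consider the $\mass$-a.e. defined map $x \mapsto (v_i^j(x))_{i,j}$ into a finite-dimensional space of tuples, and note that $|v|^2$ is a polynomial (with constant coefficients) in the pairwise pointwise scalar products $v_i^j \cdot v_i^k$ of pairs of these vector fields. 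Each such scalar product $v_i^j \cdot v_i^k$ lies in $\HSs \cap \Lpi$ — this is again a Leibniz-type statement for the scalar product of two $\WHCSs \cap \tanXinf$ vector fields, which follows from the calculus rules for the covariant derivative together with the $\Lpi$ bounds. Hence $|v|^2 \in \HSs \cap \Lpi$ as a polynomial in $\HSs \cap \Lpi$ functions, and finally $|v| = \sqrt{|v|^2} \in \HSs \cap \Lpi$ since $\sqrt{\cdot}$ is Lipschitz away from $0$ and $|v|^2$ is bounded (one composes with a smoothed square root, or equivalently uses the chain rule for $\HSs$ functions together with the bound $|\nabla |v|| \le |\nabla |v|^2|/(2|v|)$ on $\{|v| > 0\}$ plus locality to handle $\{|v| = 0\}$).

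The main obstacle I expect is precisely this passage from decomposable tensors to sums: ensuring that the pointwise scalar products $v_i^j \cdot v_i^k$ genuinely belong to $\HSs$ with the expected weak upper gradient bound. This requires the Leibniz rule $\nabla(w_1 \cdot w_2) = \nabla w_1 : w_2 + w_1 : \nabla w_2$ (in an appropriate $\Lp^1$ sense), valid for $w_1, w_2 \in \WHCSs \cap \tanXinf$, together with the $\Lpi$ bounds to guarantee the right-hand side is in $\Lp^2$; once this is in hand, stability of $\HSs \cap \Lpi$ under products and under composition with smooth Lipschitz functions (like a regularized square root) finishes the argument routinely. Everything else — the norm formula on decomposable tensors, the algebra property of $\HSs \cap \Lpi$, the finite-sum bookkeeping — is standard.
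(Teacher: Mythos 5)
Your reduction is sound up to the last step: the scalar products $v_i^j\cdot v_i^k$ do belong to $\HSs\cap\Lpi$, so $\abs{v}^2\in\HSs\cap\Lpi$ follows. The genuine gap is the passage from $\abs{v}^2\in\HSs\cap\Lpi$ to $\abs{v}\in\HSs$. The square root of a bounded non-negative $\HSs$ function is in general \emph{not} in $\HSs$: already on an interval, $g(x)=x$ is bounded, non-negative and Sobolev, yet $\sqrt{g}$ has derivative $1/(2\sqrt{x})\notin\Lp^2$ near the origin. Boundedness of $\abs{v}^2$ is irrelevant, since the failure of Lipschitzianity of $\sqrt{\cdot}$ sits at $0$; and ``locality on $\{\abs{v}=0\}$'' does not help either, because the obstruction lives where $\abs{v}$ is small but positive, where $\abs{\nabla\abs{v}^2}/(2\abs{v})$ may fail to be square-integrable. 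The smoothed square root $\sqrt{\varepsilon+\abs{v}^2}$ runs into the same problem: its gradient is $\abs{\nabla\abs{v}^2}/(2\sqrt{\varepsilon+\abs{v}^2})$, and without further information there is no $\varepsilon$-uniform $\Lp^2$ bound (for $g(x)=x$ the corresponding norms blow up as $\varepsilon\to 0$).

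What is needed, and what the paper's proof supplies, is the structural estimate $\abs{\nabla\abs{v}^2}\le g_v\,\abs{v}$ $\mass$-a.e.\ with $g_v\in\Lpt$; concretely $g_v=2mnH_v^{n-1}h_v$, where $H_v$ bounds the $\abs{v_j^i}$ and $h_v\in\Lpt$ dominates the $\abs{\nabla v_j^i}$. This is obtained by computing $\nabla(v\cdot v)\cdot Z$ via the Leibniz rule for the directional derivative $\nabla_Z$ and recognizing the result as (twice) the scalar product of a tensor of controlled pointwise norm with $v$ itself, so that Cauchy--Schwarz produces the crucial factor $\abs{v}$ on the right-hand side. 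With that bound in hand, the argument of \cite[Lemma 2.5]{debin2019quasicontinuous} gives $\abs{\nabla\abs{v}}\le g_v/2\in\Lpt$ on $\{\abs{v}>0\}$ and hence $\abs{v}\in\HSs$. Your first paragraph (the decomposable case via the algebra property of $\HSs\cap\Lpi$) is correct but does not feed into the general case; the missing ingredient for general finite sums is precisely the estimate above, and merely knowing $\abs{v}^2\in\HSs\cap\Lpi$ cannot replace it.
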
 
\begin{proof}
	Fix $v=\sum_{i=1}^m v_1^i\otimes\dots\otimes v_n^i\in(\WHCSs\cap\Lpi)^{\otimes n}$, where $\{v_j^i\}\subseteq\WHCSs\cap\Lpi$, so that there exist $H_v\in\RR$ and $h_v\in\Lpt$ such that for every $i=1,\dots,m$ and $j=1,\dots,n$ it holds \begin{alignat}{5}\notag
		&\abs{v_j^i}\le H_v\qquad&&\mass\text{-a.e.}\\\notag
		&\abs{\nabla v_j^i}\le h_v \qquad&&\mass\text{-a.e.}\notag
	\end{alignat}
	Following a standard argument as e.g.\ in the proof of \cite[Lemma 2.5]{debin2019quasicontinuous} it is enough to show that $|\nabla \abs{v}^2|\le g_v \abs{v}\ \mass$-a.e.\ for some $g_v\in\Lpt$. 
	For  $Z\in\tanXzero$ we define $\nabla_Z v$ as in the discussion right above \cite[Proposition 3.4.6]{Gigli14}, that is the unique vector field in $\tanXzero$ such that for every $Y\in\tanXzero$
	$$
	\nabla_Z v\,\cdot\,Y=\nabla v\,\cdot\, Z\otimes Y\qquad\mass\text{-a.e.}
	$$
	Clearly, $\abs{\nabla_Z v}\le \abs{\nabla v}\abs{Z}$.
	If $Z\in\tanXzero$, we compute, by \cite[Proposition 3.4.6 $i)$]{Gigli14},
	\begin{equation}\notag
		\begin{split}
			&\nabla(v\,\cdot\, v)\,\cdot\,Z=\sum_{i,j=1}^m \nabla\left(\prod_{k=1}^n v_k^i\,\cdot\,v_k^j\right)\,\cdot\,Z=2 \sum_{i,j=1}^m\sum_{h=1}^n  (\nabla_Z v_h^i)\,\cdot\,v_h^j \prod_{\substack{k=1\\ k\ne h}}^n v_k^i\,\cdot\,v_k^j
			\\
			&\qquad=2 \left( \sum_{i=1}^m\sum_{h=1}^n v^i_1\otimes\cdots\otimes v_{h-1}^i\otimes \nabla_Z v_h^i\otimes v_{h+1}^i\otimes\cdots\otimes v^i_n\right)\,\cdot\,\left(\sum_{j=1}^m v^j_1\otimes\cdots\otimes v_n^j\right).
		\end{split}
	\end{equation}
	Now we have finished, as, by the arbitrariness of $Z\in\tanXzero$, the equality above implies that
	\begin{equation*}
		|\nabla \abs{v}^2|\le 2 m n H_v^{n-1} h_v \abs{v}\qquad \mass\text{-a.e.}\qedhere
	\end{equation*}
\end{proof}

\medskip 

We consider now the multilinear map\begin{equation}\label{map}
	(\WHCSs\cap\Lpi)^n\ni(v_1,\dots,v_n)\mapsto \qcrbar(v_1)\otimes\cdots\otimes\qcrbar(v_n)\in\mathrm{L}_\capa^0(T^{\otimes n}\XX)
\end{equation}
and we notice that  (the left hand side is well defined thanks to  Lemma \ref{normsobo} - but as there is a squared norm here, this is indeed trivial)  \begin{equation}\label{equality}
	\qcr\left(\abs{ \sum_{i=1}^m v^i_1\otimes\cdots\otimes v^i_n}^2\right)=\abs{	\sum_{i=1}^m\qcrbar(v^i_1)\otimes\cdots\otimes\qcrbar(v^i_n)}^2 \qquad\capa\text{-a.e.}
\end{equation} so that the map in \eqref{map}
induces a map $\qcrbar:(\WHCSs\cap\Lpi)^{\otimes n}\rightarrow\mathrm{L}_\capa^0(T^{\otimes n}\XX)$ that satisfies, thanks to \eqref{equality},
\begin{equation}\notag
	\qcr({\abs{v}})=\abs{\qcrbar(v)} \qquad\capa\text{-a.e.\ for every }v\in (\WHCSs\cap\Lpi)^{\otimes n}.
\end{equation}
As usual, we often omit to write the maps $\qcrbar$ and $\qcr$.

\medskip

\subsubsection{Exterior  power}\label{subsect2}
Much like the previous section dealt with tensor product, in this section we deal with exterior power (see \cite{Gigli14}). As the arguments are mostly identical, we are going just to sketch the key ideas and we will assume familiarity with the related theory. We assume again that $(\XX,\dist,\mass)$ is a $\RCD(K,\infty)$ space.
Similarly to what done before, we set
$$
\mathrm{L}^2(T^{\wedge n}\XX)\defeq\tanX^{\wedge n}
$$
and define, for $\mathcal C\subseteq \tanX$, $\mathcal C^{\wedge n}$ as before.

\medskip

Let now $\MM$ be an Hilbert $\Lpc$-normed $\Lpc$-module. We define the $\Lpc$-normed $\Lpc$-module $\MM^{\wedge n}$ as the quotient of $
\MM^{\wedge n}$ with respect to the closure of the subspace generated by elements of the form $v_1\otimes\dots\otimes v_n$, where $v_1,\dots, v_n\in\MM$ are such that $v_i=v_j$ for at least two different indices $i$ and $j$. This definition is the trivial adaptation of \cite{Gigli14} to our context and it is possible to prove that $\MM^{\wedge n}$  is indeed an $\Lpc$-normed $\Lpc$-module and that its scalar product is characterized by
$$
v_1\wedge\cdots\wedge v_n\,\cdot\,w_1\wedge\cdots\wedge w_n\,=\det(v_i\,\cdot\,w_j)\qquad\capa\text{-a.e.}$$

Similarly to what done before, if $\mu$ is a Borel measure finite on balls such that $\mu\ll\capa$, we set
$$\mathrm{L}_\mu^p(T^{\wedge n}\XX)\defeq \tanbvXp{p}{\mu}^{\wedge n}\qquad\text{for }p\in\{0\}\cup[1,\infty].$$
\begin{rem}
	Now we adapt Remark \ref{remcompatibility}.
		Let $\mu$ be a Borel measure, finite on balls, such that $\mu\ll\capa$. Let also $\MM$ be an Hilbert $\Lpc$-normed $\Lpc$-module.
	Then, using the notation of Theorem \ref{finemodule}, we have a canonical isomorphism
	\begin{equation}\notag
	(\MM^{\wedge n})^0_\mu\cong(\MM^0_\mu)^{\wedge n}.
\end{equation} 
	This isomorphism is obtained using the map induced by the well defined multilinear map
$$(\MM^0_\mu)^n\ni ([v_1]_{\sim_\mu},\dots, [v_n]_{\sim_\mu})\mapsto [(v_1\wedge\dots\wedge v_n)]_{\sim_\mu}\in(\MM^{\wedge n})^0_\mu$$
and noticing that such map turns out to be an isometry with dense image between complete spaces.
	Therefore we also have the canonical inclusion
\begin{equation}\notag
	(\MM^{\wedge n})^p_\mu\cong\left\{v\in (\MM^0_\mu)^{\wedge n}:\abs{v}\in\Lp^p(\mu)\right\}\qquad\text{if }p\in[1,\infty].
\end{equation} 
In particular, then, with the obvious interpretation for 	$\mathrm{L}^0_\capa(T^{\wedge n}\XX)$,
\begin{equation}\notag
	(\mathrm{L}^0_\capa(T^{\wedge n}\XX))^0_\mu\cong
	\mathrm{L}_\mu^0(T^{\wedge n}\XX)
\end{equation} 
so that
\begin{equation}\notag
	(\mathrm{L}_\capa^0(T^{\wedge n}\XX))^p_\mu\cong\mathrm{L}_\mu^p(T^{\wedge n}\XX)\qquad\text{if }p\in[1,\infty],
\end{equation} 
where $\mathrm{L}_\mu^p(T^{\wedge n}\XX)\defeq\left\{v\in \mathrm{L}_\mu^0(T^{\wedge n}\XX):\abs{v}\in\Lp^p(\mu)\right\}$.\fr
\end{rem}

\medskip
 Now we consider $\TestV(\XX)^{\wedge n}$, that is a module over the ring $\Ss\cap\Lpi$ in the algebraic sense and, by Lemma \ref{normsobowedge} below, 
\begin{equation}\notag
	\frac{1}{1\vee|v|}v\in\TestV(\XX)^{\wedge n}\qquad\text{for every }v\in\TestV(\XX)^{\wedge n}.
\end{equation}

As a consequence of of Lemma \ref{densitytestbargentens}, we have the following result.
\begin{lem}\label{densitytestbargentenswedge}
	Let $(\XX,\dist,\mass)$ be an $\RCD(K,\infty)$ space and let $\mu$ be a finite Borel measure such that $\mu\ll\capa$. Then
	$\TestV(\XX)^{\wedge n}$
	is dense in $\mathrm{L}_\mu^p(T^{\wedge n}\XX)$ for every $p\in[1,\infty)$.
\end{lem}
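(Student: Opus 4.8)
The plan is to deduce the statement from the tensor‑power density of Lemma \ref{densitytestbargentens} by pushing everything through the canonical quotient map that defines the exterior power. Recall that, by construction, $\mathrm{L}_\mu^0(T^{\wedge n}\XX)$ is the quotient of $\mathrm{L}_\mu^0(T^{\otimes n}\XX)$ by the closure of the submodule generated by the decomposable tensors with a repeated factor; denote by $q\colon\mathrm{L}_\mu^0(T^{\otimes n}\XX)\to\mathrm{L}_\mu^0(T^{\wedge n}\XX)$ the associated quotient map. This $q$ is a surjective continuous module morphism, so it obeys a pointwise bound $\abs{q(v)}\le c_n\abs{v}$ $\mu$-a.e.\ for a dimensional constant $c_n$; in particular it restricts to a continuous map $\mathrm{L}_\mu^p(T^{\otimes n}\XX)\to\mathrm{L}_\mu^p(T^{\wedge n}\XX)$ for every $p\in[1,\infty)$. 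Moreover $q$ sends $v_1\otimes\cdots\otimes v_n$ to $v_1\wedge\cdots\wedge v_n$, hence $q\big(\TestV(\XX)^{\otimes n}\big)=\TestV(\XX)^{\wedge n}$.

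Next I would carry out the same reductions as in the proof of Lemma \ref{densitytestbargentens}. Fix $p\in[1,\infty)$ and $w\in\mathrm{L}_\mu^p(T^{\wedge n}\XX)$. By definition of $\mathrm{L}_\mu^0(T^{\wedge n}\XX)$ as the $\Lp^0(\mu)$-completion of the algebraic exterior power of $\tanbvXp{0}{\mu}$, $w$ is an $\Lp^0(\mu)$-limit of finite sums of decomposable wedges $w_1\wedge\cdots\wedge w_n$ with $w_i\in\tanbvXp{0}{\mu}$; multiplying each approximant by the $\Lp^0(\mu)$-function $\abs{w}/(\text{its pointwise norm})$, passing to a $\mu$-a.e.\ convergent subsequence, and invoking dominated convergence, one upgrades this to convergence in $\mathrm{L}_\mu^p(T^{\wedge n}\XX)$ with pointwise norms bounded in $\Lp^p(\mu)$. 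By linearity it then suffices to approximate a single decomposable $w_1\wedge\cdots\wedge w_n$; and truncating each factor on the common sublevel sets $\{\abs{w_1}\le m,\dots,\abs{w_n}\le m\}$ (legitimate because $\mu$ is finite, again by dominated convergence) one reduces to the case $w_i\in\mathrm{L}_\mu^\infty(T\XX)$ for every $i$.

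In this situation $w_1\otimes\cdots\otimes w_n\in\mathrm{L}_\mu^p(T^{\otimes n}\XX)$, since its pointwise norm $\prod_i\abs{w_i}$ is bounded and hence lies in $\Lp^p(\mu)$ (as $\mu$ is finite), and $q(w_1\otimes\cdots\otimes w_n)=w_1\wedge\cdots\wedge w_n$. By Lemma \ref{densitytestbargentens} there is a sequence $u^k\in\TestV(\XX)^{\otimes n}$ with $u^k\to w_1\otimes\cdots\otimes w_n$ in $\mathrm{L}_\mu^p(T^{\otimes n}\XX)$; applying $q$ and using its continuity for the $\Lp^p$-topologies gives $q(u^k)\in\TestV(\XX)^{\wedge n}$ with $q(u^k)\to w_1\wedge\cdots\wedge w_n$ in $\mathrm{L}_\mu^p(T^{\wedge n}\XX)$. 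This concludes the argument.

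The only point that requires any care is the verification of the two properties of $q$ used above: its continuity for the $\Lp^p(\mu)$-topologies (equivalently, the pointwise estimate $\abs{q(v)}\le c_n\abs{v}$) and the identity $q\big(\TestV(\XX)^{\otimes n}\big)=\TestV(\XX)^{\wedge n}$. Both follow directly from the definition of the exterior power of a normed module as a quotient of its tensor power, together with the compatibility of this construction with passing to the measure $\mu$ recorded after the definition of $\mathrm{L}_\mu^p(T^{\wedge n}\XX)$. Beyond this, the proof is a verbatim repetition of the one of Lemma \ref{densitytestbargentens}; so I do not expect any genuine obstacle here, which is exactly why the statement is advertised as a consequence of that lemma.
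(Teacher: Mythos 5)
Your argument is correct and is exactly the route the paper intends: the paper states the lemma as an unproved consequence of Lemma \ref{densitytestbargentens}, and passing through the quotient map onto the exterior power (which is a pointwise $1$-Lipschitz module morphism sending $\TestV(\XX)^{\otimes n}$ onto $\TestV(\XX)^{\wedge n}$) together with the same renormalization/truncation reductions is the natural way to make that deduction precise. No genuinely different idea is involved, and the one step you flag as needing care is indeed the only one.
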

The following result corresponds to  Lemma \ref{normsobo}. 
\begin{lem}\label{normsobowedge}
	Let $(\XX,\dist,\mass)$ be an $\RCD(K,\infty)$ space and let $v\in(\WHCSs\cap\Lpi)^{\wedge n}$. Then $\abs{v}\in\HSs\cap\Lpi$.
\end{lem}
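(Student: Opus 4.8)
The plan is to repeat, \emph{mutatis mutandis}, the proof of Lemma \ref{normsobo}, replacing tensor products by exterior products throughout and using Hadamard's inequality in place of the trivial identity for the norm of a decomposable tensor. Fix $v=\sum_{i=1}^m v_1^i\wedge\cdots\wedge v_n^i\in(\WHCSs\cap\Lpi)^{\wedge n}$ with $v_j^i\in\WHCSs\cap\Lpi$, and pick $H_v\in\RR$, $h_v\in\Lpt$ with $\abs{v_j^i}\le H_v$ and $\abs{\nabla v_j^i}\le h_v$ $\mass$-a.e.\ for all $i,j$. Since $\abs{v}^2=v\cdot v=\sum_{i,j=1}^m\det(v_a^i\cdot v_b^j)_{a,b}$ is a polynomial with bounded coefficients in the functions $v_a^i\cdot v_b^j\in\HSs\cap\Lpi$ (Leibniz rule for the scalar product of bounded covariant-Sobolev vector fields), we have $\abs{v}^2\in\HSs\cap\Lpi$, and exactly as in the proof of Lemma \ref{normsobo} — the standard truncation argument behind \cite[Lemma 2.5]{debin2019quasicontinuous} — it suffices to produce $g_v\in\Lpt$ with $\abs{\nabla\abs{v}^2}\le g_v\abs{v}$ $\mass$-a.e.

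For $Z\in\tanXzero$ define $\nabla_Z v_h^i\in\tanXzero$ as in Lemma \ref{normsobo}, so that $\abs{\nabla_Z v_h^i}\le\abs{\nabla v_h^i}\abs{Z}\le h_v\abs{Z}$ $\mass$-a.e. Using the Leibniz rule for the covariant derivative on the exterior power — i.e.\ $\nabla_Z(w_1\wedge\cdots\wedge w_n)=\sum_{h=1}^n w_1\wedge\cdots\wedge\nabla_Z w_h\wedge\cdots\wedge w_n$, which descends to the quotient module $\tanX^{\wedge n}$ from the corresponding rule on $\tanX^{\otimes n}$ — one computes, just as in Lemma \ref{normsobo},
\begin{equation}\notag
	\nabla\big(\abs{v}^2\big)\cdot Z=2\,A_Z\cdot v,\qquad A_Z\defeq\sum_{i=1}^m\sum_{h=1}^n v_1^i\wedge\cdots\wedge v_{h-1}^i\wedge\nabla_Z v_h^i\wedge v_{h+1}^i\wedge\cdots\wedge v_n^i.
\end{equation}
By Cauchy--Schwarz $\abs{\nabla(\abs{v}^2)\cdot Z}\le 2\abs{A_Z}\abs{v}$, while Hadamard's inequality $\abs{w_1\wedge\cdots\wedge w_n}\le\abs{w_1}\cdots\abs{w_n}$ — valid because $\abs{w_1\wedge\cdots\wedge w_n}^2=\det(w_a\cdot w_b)$ is bounded by the product of the diagonal entries of the (positive semidefinite) Gram matrix — gives $\abs{A_Z}\le m\,n\,H_v^{\,n-1}h_v\abs{Z}$ $\mass$-a.e. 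By the arbitrariness of $Z\in\tanXzero$ we conclude $\abs{\nabla\abs{v}^2}\le 2m\,n\,H_v^{\,n-1}h_v\,\abs{v}$ $\mass$-a.e., so $g_v\defeq 2m\,n\,H_v^{\,n-1}h_v\in\Lpt$ does the job, whence $\abs{v}\in\HSs\cap\Lpi$.

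The only point requiring (routine) care is the well-definedness of $\nabla_Z$ on $\tanX^{\wedge n}$, i.e.\ the compatibility of the Leibniz rule with passing to the antisymmetrization quotient; this follows as in the tensor case, because the permutation maps on $\tanX^{\otimes n}$ are module isometries commuting with $\nabla_Z$, so the antisymmetrizer does too. Alternatively one can bypass the computation entirely: the map $v_1\wedge\cdots\wedge v_n\mapsto\sum_{\sigma\in S_n}\mathrm{sign}(\sigma)\,v_{\sigma(1)}\otimes\cdots\otimes v_{\sigma(n)}$ embeds $\tanX^{\wedge n}$ into $\tanX^{\otimes n}$ as a $\sqrt{n!}$-homothety, carrying $(\WHCSs\cap\Lpi)^{\wedge n}$ into $(\WHCSs\cap\Lpi)^{\otimes n}$, so that $\sqrt{n!}\,\abs{v}$ equals the norm of an element to which Lemma \ref{normsobo} applies directly.
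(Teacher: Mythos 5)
Your proof is correct and follows essentially the same route as the paper's: both differentiate $\abs{v}^2=\sum_{i,j}\det(v_a^i\cdot v_b^j)$ via the Leibniz rule, re-collect the result as $2A_Z\cdot v$ with $A_Z$ a sum of wedge products having one factor replaced by $\nabla_Z v_h^i$, and conclude $\abs{\nabla\abs{v}^2}\le 2mnH_v^{n-1}h_v\abs{v}$ before invoking the truncation argument of \cite[Lemma 2.5]{debin2019quasicontinuous}. Your only additions are to make explicit the Hadamard inequality that the paper uses tacitly when bounding $\abs{A_Z}$, and the alternative $\sqrt{n!}$-homothety embedding into the tensor power, which would indeed let one deduce the statement directly from Lemma \ref{normsobo}.
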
 
\begin{proof}
	The proof is very similar to the one of Lemma \ref{normsobo}, we simply sketch the key computation for the sake of completeness. Take $v\in(\WHCSs\cap\Lpi)^{\wedge n}$, say $v=\sum_{i=1}^m v_1^i\otimes\dots\otimes v_n^i$, where $\{v_j^i\}\in \HSs\cap\Lpi$. Take $H_v\in\RR$ and $h_v\in\Lpt$ as in the proof of Lemma \ref{normsobo}. We will prove that
	$$|\nabla \abs{v}^2|\le 2 m n H_v^{n-1} h_v \abs{v}\qquad \mass\text{-a.e.}$$
	and thus the proof will be concluded as in Lemma \ref{normsobo}. We take $Z\in\tanXzero$ and we compute (here $S_n$ denotes the symmetric group)
	\begin{align*}
		&\nabla (v\,\cdot\, v)\,\cdot\, Z=\sum_{i,j=1}^m \nabla\det ((v^i_k v^j_h)_{h,k})=\sum_{i,j=1}^m\sum_{\sigma\in S_n} \nabla \sum_{k=1}^n v^i_k\,\cdot\, v^j_{\sigma (k)}\\
		&\qquad=2\sum_{i,j=1}^m\sum_{\sigma\in S_n} \sum_{h=1}^n (\nabla_Z v^i_h)\,\cdot\, v^j_{\sigma (h)}  \sum_{\substack{k=1\\ k\ne h}}^n v_k^i\,\cdot\,v_{\sigma(k)}^j
		\\
		&\qquad= 2\sum_{i,j=1}^m \left( \sum_{i=1}^m\sum_{h=1}^n v^i_1\wedge\cdots\wedge v_{h-1}^i\wedge \nabla_Z v_h^i\wedge v_{h+1}^i\wedge\cdots\wedge v^i_n\right)\,\cdot\,\left(\sum_{j=1}^m v^j_1\wedge\cdots\wedge v_n^j\right)
	\end{align*}
	so that we have proved the claim.
\end{proof}

\medskip 

As before, we consider the  multilinear map defined by \eqref{map}
and we notice that  (the left hand side is well defined thanks to  Lemma \ref{normsobowedge} - but as there is a squared norm here, this is indeed trivial)  \begin{equation}\label{equalitywedge}
	\qcr\left(\abs{ \sum_{i=1}^m v^i_1\wedge\cdots\wedge v^i_n}^2\right)=\abs{	\sum_{i=1}^m\qcrbar(v^i_1)\wedge\cdots\wedge\qcrbar(v^i_n)}^2 \qquad\capa\text{-a.e.}
\end{equation} so that the map in \eqref{map}
induces a map $\qcrbar:(\WHCSs\cap\Lpi)^{\wedge n}\rightarrow\mathrm{L}_\capa^0(T^{\wedge n}\XX)$ that satisfies, thanks to \eqref{equalitywedge},
\begin{equation}\notag
	\qcr({\abs{v}})=\abs{\qcrbar(v)} \qquad\capa\text{-a.e.\ for every }v\in (\WHCSs\cap\Lpi)^{\wedge n}.
\end{equation}
As usual, we often omit to write the maps $\qcrbar$ and $\qcr$.

\section{Main Part}
\subsection{Decomposition of the tangent module}
The following theorem provides us with a dimensional decomposition of the $\capa$-tangent module, along with an orthonormal basis  made of \say{smooth} vector fields of the $\capa$-tangent module on every element of the induced partition. This will be the first step towards the construction of the most relevant objects of this note. Notice that one should not expect the relevant dimension to be unique: in a smooth manifold of dimension $n$ with boundary, the $\capa$-tangent module sees the boundary, thus it has dimension $n$ in the interior of the manifold and dimension $n-1$ at the boundary. It is unclear if the situation on $\RCD$ spaces can be more complicated than that.
\begin{thm}[{\cite[Theorem 3]{BGgeneral}}]\label{captangent}
	Let $(\XX,\dist,\mass)$ an $\RCD(K,N)$ space of essential dimension $n$. Then there exists a partition of $\XX$ made of countably many bounded Borel sets $\{A_k\}_{k\in\NN}$ such that for every $k$ there exist $n(k)$ with $0\le n(k)\le n$ and $\big\{v_1^k,\dots,v^k_{n(k)}\big\}\subseteq\TestVbar(\XX)$ with bounded support which is an orthonormal basis of $\tanXcap$ on $A_k$, in the sense that $$ v_i\,\cdot\,v_j=\delta_i^j\qquad\capa\text{-a.e.\ on }A_k$$ and for every $v\in\tanXcap$ there exist $g_1,\dots,g_{n(k)}\in\Lpc$ such that 
		$$v=\sum_{i=1}^{n(k)}g_i v_i^k\qquad\capa\text{-a.e.\ on }A_k,$$
	where, in particular,  $$ g_i=v\,\cdot\,v_i^k\qquad\capa\text{-a.e.\ on }A_k.$$

	Here we implicitly state that if $n(k)=0$ then for every $v\in\tanXcap$ we have $v=0\ \capa$-a.e.\ on $A_k$.
\end{thm}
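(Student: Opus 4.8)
The plan is to build the partition by a measure-theoretic exhaustion argument using the separability of $\tanXcap$ and the density of $\TestVbar(\XX)$. First I would fix a countable dense set $\{w_j\}_{j\in\NN}\subseteq\TestVbar(\XX)$ (with bounded support, which one can arrange since $\TestVbar(\XX)$ is generated over $\Ss\cap\Lpi$ by gradients of test functions and one can multiply by cutoffs), so that by property (ii) of Theorem~\ref{tancapa} and density the family $\{w_j\}$ generates $\tanXcap$ in the sense of modules. For a fixed tuple $J=(j_1,\dots,j_m)$ of indices, consider the Gram matrix $G^J\defeq(w_{j_a}\cdot w_{j_b})_{a,b=1}^m$, whose entries lie in $\Lpc$ since they are quasi-continuous (indeed $\Ss\cap\Lpi$); on the Borel set where $\det G^J$ is bounded away from $0$ the vectors $w_{j_1},\dots,w_{j_m}$ are pointwise linearly independent, and the Gram–Schmidt procedure produces an orthonormal $m$-tuple which still lies in $\TestVbar(\XX)$ because the coefficients obtained from inverting $G^J$ on that set are in $\Ss\cap\Lpi$ (one truncates $1/\sqrt{\det}$-type quantities, using that on $\RCD$ spaces $\Ss\cap\Lpi$ is an algebra closed under such operations, cf.\ the use of $\frac{1}{1\vee|v|}v\in\TestV(\XX)$ in the preliminaries).

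The second and main step is to show that the local dimension function is well defined and that the space is covered, up to $\capa$-null sets, by the sets built above. Define, for $x$ outside a $\capa$-null set, $n(x)$ to be the largest $m$ for which there is a tuple $J$ with $\det G^J(x)>0$; this is $\capa$-measurable and bounded by $n$ (the essential dimension), because if it were larger on a non-$\capa$-negligible set one would get, after projecting via $\Prbar$ and using $|\Prbar(v)|=\Pr(|v|)$ $\mm$-a.e., more than $n$ pointwise independent vector fields on a set of positive $\mm$-measure, contradicting that $\tanX$ has dimension $n$ $\mm$-a.e. I would then partition $\XX$ (up to a $\capa$-null set, which can be absorbed into the first piece of the partition since $\capa$-null sets carry the zero module) according to the value of $n(x)=:p$ and, within each such level set, exhaust by the Borel sets $B_{J}\defeq\{\det G^J>\varepsilon\}$ over rational $\varepsilon>0$ and tuples $J$ of length $p$: on each $B_J$ the Gram–Schmidt output is an orthonormal basis of $\tanXcap$ restricted to $B_J$, the point being that any $v\in\tanXcap$ is a (countable Borel combination of) module combinations of the $w_j$, and on $B_J$ every $w_j$ is expressible through the chosen basis by solving a linear system with $\Lpc$ coefficients, whence so is $v$; the coefficients are then forced to be $g_i=v\cdot v_i^k$ by orthonormality. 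Finally, intersecting this countable cover with a countable partition of $\XX$ into bounded Borel sets and disjointifying yields the countable bounded Borel partition $\{A_k\}$ with the stated properties, with the convention $n(k)=0$ handled by the case $p=0$ where $\tanXcap$ is trivial on that piece.

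The hard part will be the measurable selection / exhaustion bookkeeping: making precise that \emph{every} $v\in\tanXcap$ — not just the countable dense family — is, on each piece $A_k$, a genuine finite module combination $\sum_i g_i v_i^k$ with $g_i\in\Lpc$, rather than merely an infinite-sum approximation. This requires using property (ii) of Theorem~\ref{tancapa} to write a general $v$ as a $\Lpc$-convergent series $\sum_n\chi_{E_n}\nablatilde f_n$, restricting to $A_k$, and then using that on $A_k$ the finitely many $v_i^k$ already span in the \emph{pointwise} sense, so the coefficients $v\cdot v_i^k$ reconstruct $v$ exactly $\capa$-a.e.\ on $A_k$ by the pointwise Hilbert-space identity in a module of rank $p$ on $A_k$; the closedness of $\tanXcap$ under countable Borel gluings is what upgrades the pointwise statement to the module statement. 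A secondary technical point, which I would treat carefully but not expect to be a genuine obstacle, is checking that Gram–Schmidt keeps the vectors inside $\TestVbar(\XX)$ with bounded support: this is where the flexibility of allowing $\Ss\cap\Lpi$ coefficients (rather than only $\TestF$) in the definition of $\TestVbar(\XX)$ pays off.
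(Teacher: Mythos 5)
This theorem is not proved in the paper: it is quoted verbatim from \cite{BGgeneral}, so there is no internal argument to compare against. Your architecture is nonetheless the standard and essentially correct route for such statements: a countable generating family of test vector fields, Gram matrices, Gram--Schmidt on the Borel sets $\{\det G^J>\varepsilon\}$, a local dimension function, and the continuous module projection $v\mapsto\sum_i(v\cdot v_i^k)v_i^k$ to upgrade spanning from the generators to an arbitrary $v\in\tanXcap$. The two points you isolate as delicate (keeping the Gram--Schmidt output inside $\TestVbar(\XX)$ by globally defined Lipschitz truncations of the coefficients, and passing from pointwise spanning of a dense family to the whole module) are indeed the right ones and are handled correctly in outline. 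A minor omission: the existence of a \emph{countable} generating family $\{w_j\}$ should be justified via separability of $\HSs$ together with the continuity of $\qcrbar$, since Theorem \ref{tancapa}(ii) by itself only asserts density of an uncountable class.

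There is, however, one genuine gap: your justification of the bound $n(x)\le n$. You argue that if $n(x)>n$ on a non-$\capa$-negligible set then, after projecting via $\Prbar$, one would obtain more than $n$ pointwise independent fields ``on a set of positive $\mm$-measure''. That inference is false: a set can have positive capacity and zero $\mm$-measure --- this is precisely why $\tanXcap$ is a finer object than $\tanX$ (cf.\ the boundary of a manifold, mentioned right before the statement, where the two local dimensions genuinely differ) --- and on such a set $\Prbar$ carries no information at all. The correct mechanism is quasi-continuity: each Gram entry $w_{j_a}\,\cdot\,w_{j_b}$ is a quasi-continuous element of $\Ss\cap\Lpi$, hence so is $\det G^J$ (algebra property plus Lipschitz post-composition); since the $\mm$-module $\tanX$ has local dimension at most $n$, one has $\det G^J=0$ $\mm$-a.e.\ whenever $|J|>n$, and the injectivity of $\Pr$ on quasi-continuous functions (recalled in Section \ref{cdnocads}, from \cite{debin2019quasicontinuous}) then forces $\det G^J=0$ $\capa$-a.e. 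Without this step, not only the bound $n(k)\le n$ but every piece of ``pointwise linear algebra'' you wish to transfer from the $\mm$-module to the $\capa$-module is unjustified. With it, your sketch goes through.
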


\subsection{Hessian}\label{secthess}
\subsubsection{Convexity}
In the following definition we restrict ourselves to the case of $\RCD$ spaces. This restriction is clearly unnecessary for items $(1)$ and $(2)$, however, we preferred this formulation for the sake of simplicity, taken into account that all the results of this note are in the framework of $\RCD$ spaces.
\begin{defn}\label{variousconvex}
Let $(\XX,\dist,\mass)$ be an $\RCD(K,\infty)$ space and let $f:\XX\rightarrow(-\infty,+\infty]$. Let also $\kappa\in\RR$.  We say that 
\begin{enumerate}
\item $f$ is weakly $\kappa$ geodesically convex if for every $x_0,x_1\in\XX$, there exists a constant speed geodesic $\gamma:[0,1]\rightarrow\XX$ joining $x_0$ to $x_1$ satisfying 
\begin{equation}\label{convgeod}
	f(\gamma(t))\le (1-t)f(\gamma(0))+t f (\gamma(1))-\frac{\kappa}{2} t(1-t)\dist(x_0,x_1)^2\qquad\text{for every }t\in[0,1].
\end{equation}
\item $f$ is strongly $\kappa$ geodesically convex  if for every $x_0,x_1\in\XX$, for every constant speed geodesic $\gamma:[0,1]\rightarrow\XX$ joining $x_0$ to $x_1$, \eqref{convgeod} holds.
\end{enumerate}
If moreover $f\in \HSloc(\XX)$, we say that
\begin{enumerate}
	\setcounter{enumi}{2}
\item $\hess f\ge \kappa$ if for every $h,g\in\TestFbs(\XX)$ with $h\ge 0\ \mass$-a.e.\
\begin{equation}\label{monotest}
	\int_\XX - \dive(h\nabla g)\nabla f\,\cdot\,\nabla g-\frac{1}{2}
	h \nabla f \,\cdot\,\nabla(\abs{\nabla g}^2)\dd{\mass}\ge \kappa \int_\XX \abs{\nabla g}^2 h\dd{\mass}.
\end{equation}
\end{enumerate}
\end{defn}
\begin{rem}\label{remconv}
Notice that if moreover $f\in\HSs$ and the space is locally compact, then item $(3)$ of the definition above implies that \eqref{monotest} holds for every $h\in\Ss\cap\Lpi$ and $g\in\TestF(\XX)$. This follows from an approximation argument, taking into account also the existence of good cut-off functions as in \cite[Lemma 6.7]{AmbrosioMondinoSavare13-2} together with the algebra property of test functions.\fr
\end{rem}
Some  implications among the various items of the previous definition have already been extensively studied in the literature, see e.g.\  \cite{Ketterer_2015, KK18,LiSt,HanMonot,Erbar-Kuwada-Sturm13,SturmGF} for similar statements. Notice that $(2)\Rightarrow(1)$  is  trivially satisfied in geodesic spaces. The implication $(1)\Rightarrow(3)$, recalled in Proposition \ref{kett} below, is particularly important in the sequel, as it motivates Theorem \ref{hessrepr}, one of the main results of this note. For the proof of Proposition \ref{kett} we are going to follow the lines of the proof of \cite[Theorem 7.1]{Ketterer_2015}. As we are going to work with weaker regularity assumptions, we give the details anyway. Indeed, the fact that we do not assume $f\in D(\hess)$ forces us to proceed through a delicate approximation argument.  Finally, under additional regularity assumptions, e.g.\ $(\XX,\dist,\mass)$ is a locally compact $\RCD(K,\infty)$ space and $f\in\TestF(\XX)$, it turns out that items $(1)$, $(2)$ and $(3)$ are all equivalent (see \cite{Ketterer_2015} and \cite{SturmGF}). The equivalence of these notions of convexity is expected to hold even under weaker assumptions on $f$ (see, in this direction, Proposition \ref{kett}) but this investigation (in particular $(3)\Rightarrow(1)$) is beyond the scope of this note. 

\begin{rem}{\rm
The implication $(3)\Rightarrow(1)$ seems anything but trivial, if one does not assume that $f\in D(\hess)$. Indeed, one could hope to follow \cite{Ketterer_2015} and start by proving that $(\XX,\dist,e^{-f}\mass)$ is $\RCD(K+\kappa,\infty)$ whenever  $(\XX,\dist,\mass)$ is $\RCD(K,\infty)$ and $\hess f\ge\kappa$.  In this context, the natural way to verify the  $\RCD(K+\kappa,\infty)$  condition is via the Eulerian point of view, i.e.\ via the weak Bochner inequality. However, in order to so, we would want to exploit an approximation argument, to plug in the weak Bochner inequality for $(\XX,\dist,\mass)$  and the fact that $\hess f\ge\kappa$ and such approximation argument seems to require that the heat flow on $(\XX,\dist,e^{-f}\mass)$  maps regular enough functions to Lipschitz functions, and we were not able to prove this fact (that we remark is linked with the $\RCD$ condition).}\fr\end{rem}
\begin{prop}\label{kett}
Let $(\XX,\dist,\mass)$ be an $\RCD(K,\infty)$ space and let $f \in\HSsloc$ be a continuous and weakly $\kappa$ geodesically convex function, for some $\kappa\in\RR$.
Assume moreover that $f$ is bounded from below and locally bounded from above, in the sense that $f$ is bounded from above on every bounded subset of $\XX$. Then $\hess f\ge \kappa$.
\end{prop}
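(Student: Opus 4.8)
The plan is to read the left-hand side of \eqref{monotest} as the second derivative, at time $0$, of the function $s\mapsto\int_\XX f\,\diff\mu_s$ along a short Wasserstein geodesic $(\mu_s)_{s\in[0,1]}$ issuing from $h\mass$ with time-$0$ velocity potential a small multiple of $\nabla g$, and then to deduce from the weak $\kappa$-geodesic convexity of $f$ that this function is genuinely $\kappa$-convex; inequality \eqref{monotest} is then just the sign of its second derivative. Fix $g,h\in\TestFbs(\XX)$ with $h\ge 0$; we may assume $\int_\XX h\,\diff\mass>0$, otherwise both sides of \eqref{monotest} vanish. For $\delta>0$ small, construct the $\Wass_2$-geodesic $(\mu_s)_{s\in[0,1]}$, $\mu_s=\rho_s\mass$, with $\rho_0=h$ and whose time-$0$ velocity potential $\psi_0$ satisfies $\nabla\psi_0=\delta\nabla g$ on $\{h>0\}$ — e.g. by pushing $h\mass$ forward along the flow of $\delta\nabla g$, which for $\delta$ small is an optimal map (possibly after first replacing $g$ by a compactly supported heat-flow mollification, using that $\TestF(\XX)$ is an algebra and that the functional $L(w):=\int_\XX-\dive(h\nabla w)\nabla f\cdot\nabla w-\frac12 h\nabla f\cdot\nabla(|\nabla w|^2)\,\diff\mass$ appearing on the left of \eqref{monotest} is continuous in $w$ in the $\HSs$ topology). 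The short-time facts we need, all inherited from $g$ being a test function together with the $\RCD(K,N)$ Jacobian/density estimates, are: the $\mu_s$ are all supported in one fixed bounded set, $\rho_s\in\Lpi$ uniformly in $s$, $s\mapsto\rho_s$ is regular enough (absolutely continuous with values in a suitable Sobolev space), and the velocity potentials $\psi_s$ — which solve the continuity equation $\partial_s\rho_s+\dive(\rho_s\nabla\psi_s)=0$ together with the Hamilton--Jacobi relation $\partial_s\psi_s+\frac12|\nabla\psi_s|^2=0$ (signs up to conventions) — are as regular as needed. Since $f$ is bounded below and bounded on bounded sets, $\theta(s):=\int_\XX f\,\diff\mu_s$ is finite for every $s$, and differentiating through the continuity equation, which uses only $f\in\HSsloc$, gives $\theta'(s)=\int_\XX\nabla f\cdot\nabla\psi_s\,\diff\mu_s$.

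Next, weak $\kappa$-geodesic convexity yields that $\theta$ is $\kappa$-convex on $[0,1]$: integrating the pointwise inequality \eqref{convgeod} against the optimal plan $\pi$ between $\mu_0$ and $\mu_1$, and using that $\RCD(K,N)$ spaces are essentially non-branching and that geodesics between the absolutely continuous measures $\mu_0,\mu_1$ are essentially unique, the plan $\pi$ is concentrated on the graph of a single measurable geodesic selection; since the unique geodesic joining any two points is, by hypothesis, one along which $f$ satisfies \eqref{convgeod}, we get $\kappa$-convexity of $s\mapsto f(\gamma_s)$ for $\pi$-a.e. geodesic $\gamma$, and integration gives $\kappa$-convexity of $\theta$ (joint measurability being clear since $f$ is continuous). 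In particular $\theta'(1^-)-\theta'(0^+)\ge\kappa\,\Wass_2(\mu_0,\mu_1)^2=\kappa\delta^2\int_\XX|\nabla g|^2 h\,\diff\mass$, an inequality involving only $\nabla f$, not $\hess f$. Writing $\theta'(1^-)-\theta'(0^+)=\int_0^1\theta''(s)\,\diff s$ and computing $\theta''(s)$ by one further integration by parts in space — legitimate thanks to the short-time regularity above — one finds $\theta''(s)=\int_\XX-\dive(\rho_s\nabla\psi_s)\nabla f\cdot\nabla\psi_s-\frac12\rho_s\nabla f\cdot\nabla(|\nabla\psi_s|^2)\,\diff\mass$. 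Now let $\delta\to0$: the whole geodesic collapses to $h\mass$, so $\rho_s\to h$ and $\psi_s/\delta\to g$ in the relevant topology, uniformly in $s\in[0,1]$, whence $\delta^{-2}\theta''(s)\to L(g)$ uniformly in $s$; therefore $\delta^{-2}(\theta'(1^-)-\theta'(0^+))\to L(g)$, while $\delta^{-2}\kappa\delta^2\int_\XX|\nabla g|^2h\,\diff\mass=\kappa\int_\XX|\nabla g|^2h\,\diff\mass$ is independent of $\delta$. Passing to the limit in $\theta'(1^-)-\theta'(0^+)\ge\kappa\delta^2\int_\XX|\nabla g|^2h\,\diff\mass$ gives $L(g)\ge\kappa\int_\XX|\nabla g|^2h\,\diff\mass$, which is \eqref{monotest}.

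The main obstacle is exactly the one the authors flag: because we do not assume $f\in D(\hess)$, we cannot invoke the known second-variation formula $\frac{\diff^2}{\diff s^2}\int f\,\diff\mu_s=\int\hess f(\nabla\psi_s,\nabla\psi_s)\,\diff\mu_s$ along geodesics, so the entire argument must be carried out keeping every derivative off $f$, performing each integration by parts by hand. This forces a careful analysis of the short-time regularity of the pair $(\rho_s,\psi_s)$ (uniform density bounds, Sobolev regularity of $\rho_s$, control of $\psi_s$ and of $|\nabla\psi_s|^2$) and a delicate passage to the limit $\delta\to0$; it is precisely here that the hypotheses ``$f$ bounded below and bounded on bounded sets'' enter, ensuring $\theta(s)<\infty$ and making the continuity-equation computation licit. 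A subsidiary technical point is the construction of the short geodesic with prescribed initial velocity $\delta\nabla g$: one wants the flow of $\delta\nabla g$ to be an optimal map for $\delta$ small, which may require smoothing $g$ first (within $\TestFbs(\XX)$, e.g. by a cut-off heat-flow mollification) and then removing the smoothing at the end via the continuity of $L$.
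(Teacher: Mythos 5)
Your plan is the ``Lagrangian'' route: build a short $\Wass_2$-geodesic $(\mu_s)$ issuing from $h\mass$ with initial velocity $\delta\nabla g$, show $\theta(s)=\int f\,\dd\mu_s$ is $\kappa\delta^2\|\nabla g\|^2_{L^2(h\mass)}$-convex, and identify $\delta^{-2}\theta''$ with the left-hand side of \eqref{monotest} as $\delta\to0$. The formal computation is the right one, but the two steps on which everything rests are asserted rather than proved, and they are precisely the hard points in this setting. First, the identity $\theta''(s)=\int_\XX-\dive(\rho_s\nabla\psi_s)\,\nabla f\cdot\nabla\psi_s-\tfrac12\rho_s\nabla f\cdot\nabla(|\nabla\psi_s|^2)\,\dd\mass$ requires $|\nabla\psi_s|^2\in\HS$, $\rho_s\nabla\psi_s\in D(\dive)$, and differentiability of $s\mapsto(\rho_s,\psi_s)$ in topologies strong enough to pair with $\nabla f$ for $f$ merely in $\HSsloc$. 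On $\RCD$ spaces this amount of regularity of interpolated Kantorovich potentials is exactly the content of the second-order differentiation formula of Gigli--Tamanini, a substantial theorem in its own right whose proof (via entropic regularization) is far longer than the proposition at hand; writing ``the velocity potentials are as regular as needed'' does not discharge it, and the subsequent uniform-in-$s$ convergence $\delta^{-2}\theta''(s)\to L(g)$ involves convergence of these second-order quantities, which is again unjustified. Second, the construction of the geodesic itself: the flow of $\delta\nabla g$ is \emph{not} known to be an optimal map for small $\delta$ when $g$ is a test function (test functions have Hessian in $\Lpt$, not $\Lpi$, so even the smooth-manifold criterion $\delta\|\hess g\|_\infty<1$ is unavailable), and prescribing the initial velocity of a Wasserstein geodesic on an $\RCD$ space is a nontrivial matter. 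There is also a smaller but real subtlety in deducing $\kappa$-convexity of $\theta$ from \emph{weak} geodesic convexity: the hypothesis produces a good geodesic for every pair of points, while the optimal dynamical plan selects some geodesic for $\pi$-a.e.\ pair, and $\pi$ is singular with respect to $\mass\otimes\mass$, so the generic uniqueness of geodesics cannot be invoked as stated; one needs the $\mu_0$-a.e.\ uniqueness results for essentially non-branching spaces, or a selection argument.

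The paper avoids all of this by going through the Eulerian side. It first shows that $(\XX,\dist,e^{-f}\mass)$ is $\RCD(K+\kappa,\infty)$: weak geodesic convexity of $f$ enters only through a measurable selection of good geodesics (as in Sturm's treatment of weighted measures), combined with the $K$-convexity of $\Ent_\mass$ along every geodesic. The CD-to-Bochner equivalence then converts this into the weak Bochner inequality for the weighted space, in which $\tilde\Delta\phi=\Delta\phi-\nabla f\cdot\nabla\phi$; choosing the test density $k=he^{f}$ produces an integral inequality containing the desired Hessian term plus Ricci-type and $(\Delta g)^2$ terms. Finally, applying the same inequality on the rescaled space $(\XX,\alpha^{-1}\dist,\mass)$ with the function $\alpha^{-2}f$, dividing by $\alpha^2$ and letting $\alpha\searrow0$ kills the unwanted terms and leaves exactly \eqref{monotest}. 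This scaling trick is the idea your argument is missing: it isolates the second-order information on $f$ without ever differentiating twice along a geodesic, and hence without any regularity of interpolated potentials. If you want to salvage your approach, you would essentially have to reprove the second-order differentiation formula for $f\in\HSsloc$ with all derivatives moved off $f$, which is a much larger project.
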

\begin{proof}
	As remarked above, we  follow  the proof of \cite{Ketterer_2015}.
Define $\tilde{\mass}\defeq e^{-f}\mass$. When we want to stress that an object is relative to the space $(\XX,\dist,\tilde{\mass})$, we add the symbol $\tilde{\ }$. 
\medskip
\\\textbf{Step 1.} We show that $(\XX,\dist,\tilde{\mass})$ is an $\RCD(K+\kappa,\infty)$ space, following \cite[Proposition 6.19]{Ambrosio_2014}, which builds upon \cite[Proposition 4.14]{Sturm06I} and \cite[Lemma 4.11]{AmbrosioGigliSavare11}. 

To show $(K+\kappa)$-convexity of $\Ent_{\tilde{\mass}}$, first notice that the continuity of $f$ and classical measurable selection arguments (e.g.\ \cite[
6.9.13]{Bogachev072}) grant that there exists a $\mass\otimes\mass$-measurable map $\Gamma:\XX\times\XX\rightarrow\Geod(\XX)$ such that for $\mass\otimes\mass$-a.e.\ $(x_0,x_1)\in\XX\times\XX$, $\Gamma(x_0,x_1)$ is a geodesic joining $x_0$ to $x_1$ satisfying \eqref{convgeod}.
Also (\cite[Theorem 3.2]{DaneriSavare08}), the $\RCD(K,\infty)$ assumption on  $(\XX,\dist,\mass)$ implies that $\Ent_\mass$ is $K$-convex along every constant speed geodesic  $\{\mu_t\}_t\subseteq\Ptwo(\XX)$. Then, given $\mu,\nu\in\Ptwo(\XX)\cap D(\Ent_{\tilde{\mass}})$ we can argue as in \cite[Proposition 4.14]{Sturm06I}, verifying $(K+\kappa)$-convexity of  $\Ent_{\tilde{\mass}}$ along the geodesic given by $\{(e_t\circ \Gamma)_*(\mu\otimes\nu)\}_t$.
\medskip
\\\textbf{Step 2.}  Notice that, as $f$ is locally bounded, \cite[Lemma 4.11]{AmbrosioGigliSavare11} implies that $\phi\in\HSsloc$ if and only if $\phi\in\tilde{\mathrm{H}}_{\mathrm{loc}}^{1,2}(\XX)$ and, if this is the case
$$\abs{\nabla \phi}=|\tilde{\nabla} \phi|.$$
Also, polarizing, we obtain that the $\,\cdot\,$ product between gradients is independent of the space, so that we will drop the $\tilde{\ }$ on gradients.
Moreover, if $\phi\in\HSs$, then $\phi\in\tilde{\mathrm{H}}^{1,2}(\XX)$ and, if $\phi\in \TestFbs(\XX)$, then $\phi\in D(\tilde{\Delta})$ and 
\begin{equation}\label{laplacians}
\tilde{\Delta} \phi=\Delta \phi-\nabla f\,\cdot\,\nabla \phi. 
\end{equation}

By the equivalence result in \cite{AmbrosioGigliSavare12} (see also \cite{AmbrosioMondinoSavare13,Erbar-Kuwada-Sturm13}), we know that if $k,g\in D(\tilde{\Delta})$  with $\tilde{\Delta}k\in \Lpi$, $\tilde{\Delta}g\in \tilde{\mathrm{H}}^{1,2}(\XX)$ and 
$k\in\Lpi$, $k\ge 0$,
\begin{equation}\notag
	\begin{split}
(K+\kappa)\int_\XX\abs{{\nabla }g}^2 k \dd{\tilde{\mass}}
&\le \frac{1}{2}\int_\XX \abs{\nabla g}^2 \tilde{\Delta }k\dd{\tilde{\mass}}-\int_\XX (\nabla g\,\cdot\,\nabla \tilde{\Delta} g) k\dd{\tilde{\mass}}\\
&=\frac{1}{2}\int_\XX \abs{\nabla g}^2 \tilde{\Delta }k\dd{\tilde{\mass}}+\int_\XX\tilde{\dive}( k \nabla g)\tilde{\Delta} g \dd{\tilde{\mass}} \\
&=\frac{1}{2}\int_\XX \abs{\nabla g}^2 \tilde{\Delta }k\dd{\tilde{\mass}}+\int_\XX \nabla k\,\cdot\,\nabla g\,\tilde{\Delta} g \dd{\tilde{\mass}}+\int_\XX  k(\tilde{\Delta} g)^2\dd{\tilde{\mass}}. 
\end{split}
\end{equation}
By an approximation argument based on the mollified heat flow (for the space $(\XX,\dist,\tilde\mass)$) on $g$, we can use what we just proved to show that if $g\in\TestFbs(\XX)$ and $k$ is as above,
\begin{equation}\notag
	\begin{split}
		(K+\kappa)\int_\XX\abs{{\nabla }g}^2 k \dd{\tilde{\mass}}
		&\le\frac{1}{2}\int_\XX \abs{\nabla g}^2 \tilde{\Delta }k\dd{\tilde{\mass}}+\int_\XX \nabla k\,\cdot\,\nabla g\tilde{\Delta} g \dd{\tilde{\mass}}+\int_\XX  k(\tilde{\Delta} g)^2\dd{\tilde{\mass}}
		\\&= -\frac{1}{2}\int_\XX \nabla \abs{\nabla g}^2\,\cdot\, \nabla k \dd{\tilde{\mass}}+\int_\XX \nabla k\,\cdot\,\nabla g\tilde{\Delta} g \dd{\tilde{\mass}}+\int_\XX  k(\tilde{\Delta} g)^2\dd{\tilde{\mass}}.
	\end{split}
\end{equation}

Then, with an approximation argument based on the mollified heat flow on $k$, we have that if $g\in\TestFbs(\XX)$ and $k\in\HSs\cap\Lpi$, it holds that
\begin{equation}\notag
\begin{split}
		(K+\kappa)\int_\XX\abs{{\nabla }g}^2 k \dd{\tilde{\mass}}\le -\frac{1}{2}\int_\XX \nabla \abs{\nabla g}^2\,\cdot\, \nabla k \dd{\tilde{\mass}}+\int_\XX \nabla k\,\cdot\,\nabla g\tilde{\Delta} g \dd{\tilde{\mass}}+\int_\XX  k(\tilde{\Delta} g)^2\dd{\tilde{\mass}}.
\end{split}
\end{equation}
We choose then $k=h e^{f}$ to obtain ($h\in\TestFbs(\XX)$), recalling \eqref{laplacians},
\begin{equation}\notag
	\begin{split}
		(K+\kappa)\int_\XX\abs{{\nabla }g}^2 h\dd{\mass}
		&\le -\frac{1}{2}\int_\XX \nabla \abs{\nabla g}^2\,\cdot\, \nabla h \dd{\mass}  -\frac{1}{2}\int_\XX ( \nabla\abs{\nabla g}^2\,\cdot\, \nabla f) h \dd{\mass}\\&\qquad+\int_\XX \nabla h\,\cdot\,\nabla g (\Delta g -\nabla f\,\cdot\,\nabla g)\dd{\mass}\\&\qquad+\int_\XX \nabla f\,\cdot\,\nabla g (\Delta g -\nabla f\,\cdot\,\nabla g)h \dd{\mass}
		\\&\qquad+ \int_\XX h (\Delta g -\nabla f\,\cdot\,\nabla g)^2\dd{\mass}
		\\&=-\frac{1}{2}\int_\XX \nabla \abs{\nabla g}^2\,\cdot\, \nabla h \dd{\mass}  -\frac{1}{2}\int_\XX  (\nabla\abs{\nabla g}^2\,\cdot\, \nabla f )h \dd{\mass}\\
		 &\qquad+\int_\XX \dive(h \nabla g)\Delta g\dd{\mass}
-\int_\XX \dive(h\nabla g)\nabla g\,\cdot\,\nabla f\dd{\mass}.
	\end{split}
\end{equation}
\medskip
\\\textbf{Step 3.} Let now $\alpha>0$. We repeat the same computation of \textbf{Step 2} but starting from the $\RCD(\alpha^2 K,\infty)$ space $(\XX,\alpha^{-1}\dist,\mass)$ and the $ \kappa$ convex function $ \alpha^{-2}f$ (of course, with respect to $\alpha^{-1}\dist$) to obtain (all the differential operators are with respect to $(\XX,\dist,\mass)$)
\begin{equation}\notag
	\begin{split}
		(\alpha^2 K+\kappa)\int_\XX \alpha^2\abs{{\nabla }g}^2 h\dd{\mass}
		&\le -\alpha^4\frac{1}{2}\int_\XX \nabla \abs{\nabla g}^2\,\cdot\, \nabla h \dd{\mass}-\alpha^2\frac{1}{2}\int_\XX \nabla \abs{\nabla g}^2\,\cdot\, \nabla  f h \dd{\mass} \\&	\qquad +\alpha^4\int_\XX \dive(h \nabla g)\Delta g\dd{\mass}
	 -\alpha^2\int_\XX \dive(h\nabla g)\nabla g\,\cdot\,\nabla f\dd{\mass}.
	\end{split}
\end{equation}
Dividing this inequality by $\alpha^2$ and letting $\alpha\searrow 0$ yields the claim.
\end{proof}

\subsubsection{Measure valued Hessian}
In this section we state and prove the first main result of this note, namely Theorem \ref{hessrepr}. More precisely, we show that convex functions have, in a certain sense, a measure valued Hessian. In the Euclidean space, this is an immediate consequence of Riesz's Theorem for positive functionals and it implies that gradients of convex functions are vector fields of bounded variation. Hence we have that Hessian measures are absolutely continuous with respect to $\capa$, and  this is the case even on $\RCD$ spaces. This absolute continuity allows us to build the measure valued Hessian on $\RCD$ spaces as product of a $\capa$-tensor field and a $\sigma$-finite measure  that is absolutely continuous with respect to $\capa$.  We remark that, as the decomposition of the $\capa$-tangent module given by Theorem \ref{captangent} induces a decomposition of the space in Borel sets (not open ones), we are not able to prove that the total variation of the Hessian measure is a Radon measure. 

\medskip

Before dealing with the main theorem of this section, we define when a $\HSs$ function has a measure valued Hessian (cf.\ \cite[Definition 3.3.1]{Gigli14}) and study a couple of basic calculus properties of this newly defined notion. 
\begin{defn}\label{disthessdef}
Let $(\XX,\dist,\mass)$ be an $\RCD(K,\infty)$ space and $f\in\HSsloc$. We write  $f\in D(\Hhess)$, if there exists a $\sigma$-finite measure $\abs{\Hhess f}$ that satisfies $\abs{\Hhess f}\ll\capa$ and a symmetric tensor field $\nu_f\in\tanXcaptwo$ with $\abs{\nu_f}=1\ \abs{\Hhess f}$-a.e.\ such that 
\begin{equation}\notag
\Hhess f=\nu_f\abs{\Hhess f},
\end{equation} in the sense that for every $X,Y\in\WHHSs\cap\tanXinf$, it holds that $ X\otimes Y\,\cdot\,\nu_f\in\Lploc^1(\abs{\Hhess f})$ and, if $h\in\HSs\cap\Lpi$ has bounded support, 
\begin{equation}\label{bigclaim}
	\int_\XX hX\otimes Y\,\cdot\,\nu_f\dd{\abs{\Hhess f}}=-\int_\XX \nabla f\,\cdot\, X\dive(h Y)+h\nabla_{Y} X\,\cdot\,\nabla f\dd{\mass}.
\end{equation}
\end{defn}
\begin{prop}\label{uniquenesshess}
Let $(\XX,\dist,\mass)$ be an $\RCD(K,\infty)$ space and let $f\in D(\Hhess)$. Then the decomposition of $\Hhess f$ is unique, in the sense that, adopting the same notation of Definition \ref{disthessdef}, the measure $\abs{\Hhess f}$ is unique and the tensor field $\nu_f$ is unique, up to $\abs{\Hhess f}$-a.e.\ equality.
\end{prop}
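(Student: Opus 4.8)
\emph{Sketch of proof.} Let $(\nu_f^1,\abs{\Hhess f}_1)$ and $(\nu_f^2,\abs{\Hhess f}_2)$ be two decompositions of $\Hhess f$ as in Definition \ref{disthessdef}. Since $\TestV(\XX)\subseteq\WHHSs\cap\tanXinf$ (recall $\nabla g\in\WHHSs\cap\tanXinf$ for $g\in\TestF(\XX)$ and apply Lemma \ref{calculushodge}) and the right hand side of \eqref{bigclaim} does not depend on the chosen decomposition, for every $X,Y\in\TestV(\XX)$ and every $h\in\HSs\cap\Lpi$ with bounded support we get $\int_\XX h\,X\otimes Y\,\cdot\,\nu_f^1\dd{\abs{\Hhess f}_1}=\int_\XX h\,X\otimes Y\,\cdot\,\nu_f^2\dd{\abs{\Hhess f}_2}$; taking finite linear combinations, the same holds with $X\otimes Y$ replaced by any $w\in\TestV(\XX)^{\otimes 2}$.

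Next, I would compare the two objects on a common reference measure. Set $\mu\defeq\abs{\Hhess f}_1+\abs{\Hhess f}_2$, which is $\sigma$-finite with $\mu\ll\capa$, and write $\abs{\Hhess f}_i=\rho_i\mu$ with $\rho_i\in\Lp^0(\mu)$, $\rho_i\ge 0$ and $\rho_1+\rho_2=1$ $\mu$-a.e. Exploiting Theorem \ref{finemodule} and the constructions of Section \ref{subsect1}, put $T_i\defeq\rho_i\nu_f^i\in\mathrm{L}^0_\mu(T^{\otimes 2}\XX)$ (with $T_i\defeq 0$ on $\{\rho_i=0\}$): then $T_i$ is symmetric, $\abs{T_i}=\rho_i\le 1$ $\mu$-a.e., and $\int_\XX h\,w\,\cdot\,T_i\dd\mu=\int_\XX h\,w\,\cdot\,\nu_f^i\dd{\abs{\Hhess f}_i}$ for $h,w$ as above. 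Hence, writing $S\defeq T_1-T_2$ (a symmetric element of $\mathrm{L}^0_\mu(T^{\otimes 2}\XX)$ with $\abs{S}\le 1$ $\mu$-a.e.), the previous step gives
\begin{equation}\notag
\int_\XX h\,(w\,\cdot\,S)\dd\mu=0\qquad\text{for every }w\in\TestV(\XX)^{\otimes 2}\text{ and every such }h,
\end{equation}
and moreover $h\,(w\,\cdot\,S)\in\Lp^1(\mu)$, since $w\,\cdot\,\nu_f^i\in\Lploc^1(\abs{\Hhess f}_i)$ by Definition \ref{disthessdef}.

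Then I would upgrade this integral identity to a pointwise one. Fix $w\in\TestV(\XX)^{\otimes 2}$. Choosing for $h$ the Lipschitz cut-offs $x\mapsto\max\{0,\min\{1,R+1-\dist(x,x_0)\}\}$, the integrability just recorded shows $w\,\cdot\,S\in\Lp^1(\mu\mres B_R(x_0))$ for every $R$; testing then against Lipschitz functions supported in a fixed ball $B$ --- for which the identity reads $\int_B h\,(w\,\cdot\,S)\dd\mu=0$, and which are dense in $\Cc(B)$ --- and using that $(w\,\cdot\,S)\,\mu\mres B$ is a finite signed measure, the Riesz representation theorem gives $w\,\cdot\,S=0$ $\mu$-a.e.\ on $B$, hence $\mu$-a.e.\ on $\XX$. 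Since $\TestV(\XX)^{\otimes 2}$ is dense in $\mathrm{L}^2_{\mu\mres E}(T^{\otimes 2}\XX)$ for every Borel $E$ with $\mu(E)<\infty$ (Lemma \ref{densitytestbargentens}) and $\abs{S}\le 1$, from $w\,\cdot\,S=0$ for all such $w$ we deduce $S=0$ $\mu$-a.e., i.e.\ $T_1=T_2$ $\mu$-a.e. Taking pointwise norms yields $\rho_1=\rho_2$ $\mu$-a.e., hence $\rho_1=\rho_2=\tfrac12$ $\mu$-a.e.\ (because $\rho_1+\rho_2=1$), so $\abs{\Hhess f}_1=\tfrac12\mu=\abs{\Hhess f}_2$; and then $\nu_f^1=2T_1=2T_2=\nu_f^2$ $\mu$-a.e., that is $\abs{\Hhess f}_1$-a.e., as claimed.

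The only genuinely delicate point --- and the reason the statement is not entirely immediate --- is the passage from the integral identity to the pointwise one in the last step: since $\mu$ (equivalently $\abs{\Hhess f}$) is merely $\sigma$-finite and, in general, not Radon, one cannot localize using open sets of finite measure, and must instead rely on the local integrability $w\,\cdot\,\nu_f^i\in\Lploc^1(\abs{\Hhess f}_i)$ built into Definition \ref{disthessdef} (which makes $w\,\cdot\,S$ locally integrable against $\mu$) together with the density of test tensor fields in the $\mathrm{L}^2$-modules associated to the finite restrictions of $\mu$.
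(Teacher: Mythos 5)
Your argument is correct and follows essentially the same route as the paper's (much terser) proof: both representations give the same integrals against $h\,X\otimes Y$ for regular $X,Y$, one localizes to finite-measure pieces of the $\sigma$-finite reference measure, and the density of test tensor fields (Lemma \ref{densitytestbargentens}) yields equality of the densities and hence of the measures and of the polar tensors. One tiny caveat: since the proposition is stated for $\RCD(K,\infty)$ spaces, which need not be locally compact, $\Cc(B)$ may be too small for the Riesz argument as phrased, so in that step you should instead invoke the standard fact that a finite signed Borel measure on a metric space vanishing against all bounded Lipschitz functions with support in $B$ is zero there (approximating indicators of closed sets by Lipschitz functions); this changes nothing in the structure of the proof.
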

\begin{proof} Assume that $(\nu,\mu)$ and $(\nu',\mu')$ are two couples having the same properties of the couple $(\nu_f,\abs{\Hhess f})$ as in Definition \ref{disthessdef}. We show that $\mu=\mu'$ and that $\nu=\nu'$ $\mu$-a.e. By dominated convergence, we have that 
$$\int_K X\otimes Y\,\cdot\,\nu\dd{\mu}=\int_K X\otimes Y\,\cdot\,\nu'\dd{\mu'}$$
for every $K\subseteq\XX$ compact and $X,Y\in\WHHSs\cap\tanXinf$. 
This shows, by Lemma \ref{densitytestbargentens}, that $\mu\mres K=\mu'\mres K$ and $\nu=\nu'\ \mu$-a.e.\ on $K$ for any compact set $K$ satisfying $\mu(K),\mu'(K)<\infty$, and this is clearly enough to conclude.
\end{proof}

Using the by now classical calculus tools on $\RCD$ spaces, the following proposition easily follows, starting from \eqref{bigclaim}.
\begin{prop}
Let $(\XX,\dist,\mass)$ be an $\RCD(K,\infty)$ space. Then
\begin{enumerate}[label=\roman*)]
	\item $D(\hess)\subseteq D(\Hhess)$. Namely, if $f\in D(\hess)$, then $\Hhess f=\frac{\hess f}{\abs{\hess f}}(\abs{\hess f}{\mass})$;
\item $D(\Hhess)$ is a vector space. Namely, if $f,g\in D(\Hhess)$, say $\Hhess f=\nu_f\abs{\Hhess f}$, $\Hhess g=\nu_g\abs{\Hhess g}$, then $f +g\in D(\Hhess)$ and, setting 
\begin{align*}
	\mu&\defeq \abs{\Hhess f}+\abs{\Hhess g},\\
	\nu&\defeq\nu_f\dv{\abs{\Hhess f}}{\mu}+\nu_g\dv{\abs{\Hhess g}}{\mu},
\end{align*}
it holds that $\Hhess (f +g)=\frac{\nu}{\abs{\nu}}(\abs{\nu}\mu)$;
\item $D(\Hhess)\cap\Lpiloc$ is an closed under multiplication. Namely, if $f,g\in D(\Hhess)\cap\Lpiloc$, say $\Hhess f=\nu_f\abs{\Hhess f}$, $\Hhess g=\nu_g\abs{\Hhess g}$, then $f g\in D(\Hhess)$ and, setting 
\begin{align*}
\mu&\defeq \abs{\Hhess f}+\abs{\Hhess g}+\mass,\\
 \nu&\defeq g\nu_f\dv{\abs{\Hhess f}}{\mu}+f\nu_g\dv{\abs{\Hhess g}}{\mu}+\big(\nabla f\otimes\nabla g
+\nabla g\otimes\nabla f\big)\dv{\mass}{\mu},
\end{align*}
it holds that $\Hhess (f g)=\frac{\nu}{\abs{\nu}}(\abs{\nu}\mu);$
\item $D(\Hhess)\cap\Lpiloc$ is an closed under post-composition with $C^2$ functions. Namely, if $f\in D(\Hhess)$, say $\Hhess f=\nu_f\abs{\Hhess f}$ and $\varphi\in C^2(\RR)$, then $\varphi\circ f\in D(\Hhess)$ and, setting 
\begin{align*}
	\mu&\defeq \abs{\Hhess f}+\mass,\\
	\nu&\defeq\varphi'\circ f\nu_f\dv{\abs{\Hhess f}}{\mu}+\varphi''\circ f\nabla f\otimes\nabla  f\dv{\mass}{\mu},
\end{align*}
it holds that $\Hhess (\varphi\circ f)=\frac{\nu}{\abs{\nu}}(\abs{\nu}\mu).$
\end{enumerate}
\end{prop}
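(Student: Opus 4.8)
The plan is to verify, item by item, the defining identity \eqref{bigclaim} for the proposed pair $(\nu,\mu)$, using only the Leibniz rule for the divergence, the chain rule for the gradient, and the calculus rules recalled in Section~1. In every case the output of the computation will be a tensor‑valued object of the form $\nu\mu$ with $\nu\in\tanXcaptwo$ symmetric and $\mu$ a $\sigma$‑finite measure with $\mu\ll\capa$; rewriting it in the polar form $\frac{\nu}{\abs{\nu}}(\abs{\nu}\mu)$ of Definition~\ref{disthessdef} is then routine, since $\abs{\nu}\mu$ is again $\sigma$‑finite and absolutely continuous with respect to $\capa$, one sets $\frac{\nu}{\abs{\nu}}$ equal to any fixed unitary tensor on $\{\nu=0\}$, and the required local integrability $X\otimes Y\,\cdot\,\frac{\nu}{\abs{\nu}}\in\Lploc^1(\abs{\nu}\mu)$ follows from the local integrability of the building blocks together with the fact that the scalar coefficients appearing below ($g$, $f$, $\varphi'\circ f$, $\varphi''\circ f$) are locally bounded, $X,Y\in\tanXinf$, and $\nabla f,\nabla g\in\Lptloc$.

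For item $i)$ it suffices to recall the integration‑by‑parts characterization of the $\Lpt$‑Hessian: writing $\hess f(X,Y)=\nabla(\nabla f\,\cdot\,X)\,\cdot\,Y-\nabla_Y X\,\cdot\,\nabla f$ and integrating the first term by parts against $h$ gives
\[
\int_\XX h\,\hess f(X,Y)\dd{\mass}=-\int_\XX \nabla f\,\cdot\, X\dive(h Y)+h\nabla_{Y} X\,\cdot\,\nabla f\dd{\mass}
\]
for all $X,Y\in\WHHSs\cap\tanXinf$ and $h\in\HSs\cap\Lpi$ with bounded support, so that $\abs{\Hhess f}=\abs{\hess f}\mass$ and $\nu_f=\hess f/\abs{\hess f}$ (any unitary tensor where $\hess f=0$) witness $f\in D(\Hhess)$. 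Item $ii)$ is then immediate: add \eqref{bigclaim} for $f$ and for $g$, write $\abs{\Hhess f}=\dv{\abs{\Hhess f}}{\mu}\mu$ and $\abs{\Hhess g}=\dv{\abs{\Hhess g}}{\mu}\mu$ with $\mu=\abs{\Hhess f}+\abs{\Hhess g}$, and use linearity of $\nabla$, $\dive$ and $\nabla_Y(\cdot)$.

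Items $iii)$ and $iv)$ are the substantive ones and follow the same scheme. For $iii)$: starting from the right‑hand side of \eqref{bigclaim} written for $fg$, expand $\nabla(fg)=f\nabla g+g\nabla f$ and use $f\dive(hY)=\dive(fhY)-h\,\nabla f\,\cdot\,Y$ (and symmetrically with $g$) to move $f$ and $g$ inside the divergence; this produces precisely the cross terms $h(\nabla f\otimes\nabla g+\nabla g\otimes\nabla f)\,\cdot\,(X\otimes Y)$. One then applies the definition of $\Hhess g$ with test function $fh$ and of $\Hhess f$ with $gh$ — admissible because $f,g\in\HSsloc\cap\Lpiloc$ and $h$ has bounded support, hence $fh,gh\in\HSs\cap\Lpi$ with bounded support — after which all $\nabla_Y X$ contributions cancel, leaving exactly $\int_\XX hX\otimes Y\,\cdot\,\nu\dd\mu$ with $\mu,\nu$ as in the statement (rewriting each $\abs{\Hhess f}$, $\abs{\Hhess g}$, $\mass$ as a density times $\mu$). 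Item $iv)$ is identical with $\varphi\circ f$ in place of $fg$: use $\nabla(\varphi\circ f)=\varphi'\circ f\,\nabla f$ and $(\varphi'\circ f)\dive(hY)=\dive\big((\varphi'\circ f)hY\big)-h\,\varphi''\circ f\,\nabla f\,\cdot\,Y$, which is legitimate since $\varphi\in C^2$ and $f$ is locally bounded so that $\varphi'\circ f\in\HSsloc\cap\Lpiloc$ and $(\varphi'\circ f)h$ is an admissible test function, and then invoke the definition of $\Hhess f$ with test function $(\varphi'\circ f)h$; again the $\nabla_Y X$ terms cancel.

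The genuinely delicate points — and the ones I would treat as the main obstacle — are the bookkeeping that guarantees the auxiliary test functions $fh$, $gh$, $(\varphi'\circ f)h$ really lie in $\HSs\cap\Lpi$ with bounded support (this is where the $\Lpiloc$ hypotheses and the $C^2$‑regularity of $\varphi$ are used, via the Leibniz and chain rules for $\HSsloc$ functions), and the check that the resulting $\nu$ is a well‑defined symmetric element of $\tanXcaptwo$ with $X\otimes Y\,\cdot\,\nu\in\Lploc^1(\mu)$, so that Definition~\ref{disthessdef} genuinely applies; both are handled by the calculus rules of Section~1 and the observations in the first paragraph, but they require a little care because the measures involved are only $\sigma$‑finite rather than Radon.
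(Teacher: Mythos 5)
Your proposal is correct and follows exactly the route the paper intends: the paper omits the proof, stating only that the claims ``easily follow, starting from \eqref{bigclaim}'' via the standard calculus rules, and your item-by-item verification (Leibniz rule to move $f$, $g$, $\varphi'\circ f$ into the divergence, producing the admissible test functions $fh$, $gh$, $(\varphi'\circ f)h$ and the extra $\nabla f\otimes\nabla g$ and $\varphi''\circ f\,\nabla f\otimes\nabla f$ terms against $\mass$) is precisely that computation. The only cosmetic remark is that the $\nabla_Y X$ terms are not ``cancelled'' but absorbed into the right-hand side of \eqref{bigclaim} applied with the modified test functions, as your own displayed computation in fact shows.
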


\medskip

Now we show that convex functions (recall Definition \ref{variousconvex}) have measure valued Hessian. As a notation, here and below, $a^-\defeq -a\vee 0$ for every $a\in\RR$.
\begin{thm}\label{hessrepr}
Let $(\XX,\dist,\mass)$ be an $\RCD(K,N)$ space and $f\in\HSsloc$ satisfying $\hess f\ge \kappa$, for some $\kappa\in\RR$.
Then, $f\in D(\Hhess)$, say  $\Hhess f=\nu_f\abs{\Hhess f}$.
Moreover, we have that
\begin{equation}\notag
\nu_f\abs{\Hhess f}\ge \kappa \mathrm{g}\mass,
\end{equation}
in the sense that for every $v\in\tanXcap$, it holds, as measures,
 \begin{equation}\label{ineqmeas}
   v\otimes v\,\cdot\,\nu_f \abs{\Hhess f}\ge \kappa\abs{v}^2{\mass}.
 \end{equation}
 Finally, if  also $f\in\HSs$, then for every $X,Y\in\WHHSs\cap\tanXinf$, we have that $ X\otimes Y\,\cdot\,\nu_f\in\Lp^1(\abs{\Hhess f})$,  in particular, \eqref{bigclaim} holds for every $h\in\Ss\cap\Lpi$ and $X,Y\in\WHHSs\cap\tanXinf$. More precisely, we have the explicit bound, for every $X\in\WHHSs\cap\tanXinf$
	\begin{equation}
		\label{eq:boundmass}
		\int_\XX |X\otimes X\,\cdot\,\nu_f|\,\dd\abs{\Hhess f}\leq \int_\XX -\dive\,X\nabla f\cdot X-\nabla f\cdot\nabla(\tfrac12|X|^2)+2\kappa^-|X|^2\,\dd\mm.
	\end{equation}
\end{thm}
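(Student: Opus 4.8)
The plan is to use the decomposition of the $\capa$-tangent module provided by Theorem \ref{captangent} to build the mass measure and the polar tensor piece by piece on each set $A_k$ of the partition, and then glue. First I would fix $k$ and work on $A_k$, where there is an orthonormal basis $v_1^k,\dots,v^k_{n(k)}\in\TestVbar(\XX)$ with bounded support. The natural candidate for the mass measure on $A_k$ is obtained by looking at the functionals $h\mapsto -\int_\XX \nabla f\cdot v_i^k\,\dive(h v_j^k)+h\nabla_{v_j^k}v_i^k\cdot\nabla f\,\dd\mm$ (for $h\in\HSs\cap\Lpi$, bounded support, $h\geq 0$), which, after symmetrizing in $i,j$ and summing $\sum_i$ (the "trace-like" combination $\sum_i v_i^k\otimes v_i^k$), should by the hypothesis $\hess f\ge\kappa$ and Proposition \ref{kett}'s inequality \eqref{monotest} be bounded below by $\kappa\int h\sum_i|v_i^k|^2\,\dd\mm$; in particular the functional $h\mapsto (\text{that combination}) - \kappa\int h (\cdots)\,\dd\mm$ is non-negative, hence by the Riesz--Daniell--Stone representation it is a measure, finite on $A_k$ (here one uses that $A_k$ is bounded and the test vector fields have bounded support). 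Running the same argument after the rescaling trick of Step 3 of Proposition \ref{kett}'s proof — i.e.\ replacing $\dist$ by $\alpha^{-1}\dist$ — lets me control also the top-order term $-\frac12\int h\nabla f\cdot\nabla|\nabla g|^2$ type pieces and thus get, for \emph{each} pair $i,j$, that the corresponding functional differs from a signed measure absolutely continuous w.r.t.\ $\capa$ by a term controlled by $\kappa^-\mm$; this is essentially where \eqref{eq:boundmass} comes from.

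Once I have, on each $A_k$, finitely many signed measures $\mu^k_{ij}\ll\capa$ representing $h\mapsto\int h\, X\otimes Y\cdot(\cdots)$ when $X=v_i^k$, $Y=v_j^k$, I would define $\abs{\Hhess f}\mres A_k$ as (a multiple of) $\sum_{ij}|\mu^k_{ij}|$ plus $\mm\mres A_k$, and let $\nu_f$ on $A_k$ be $\sum_{ij}\frac{\diff\mu^k_{ij}}{\diff(\abs{\Hhess f})}\,v_i^k\otimes v_j^k$, symmetrized. Summing over $k$ produces a $\sigma$-finite $\abs{\Hhess f}\ll\capa$ and a symmetric $\capa$-tensor field $\nu_f$; I then normalize so that $\abs{\nu_f}=1$ $\abs{\Hhess f}$-a.e.\ (absorbing the Radon--Nikodym density of $|\nu_f|$ into the measure). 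To verify \eqref{bigclaim} for general $X,Y\in\WHHSs\cap\tanXinf$ I expand $X=\sum_i g_i v_i^k$, $Y=\sum_j g'_j v_j^k$ $\capa$-a.e.\ on $A_k$ with $g_i,g'_j\in\Lpc$ (Theorem \ref{captangent}); the delicate point is that $g_i v_i^k$ need not itself lie in $\WHHSs\cap\tanXinf$, so I cannot simply plug it into the defining identity. This is the main obstacle: I expect to handle it by a careful localization/approximation, truncating $X,Y$ via the cut-off functions $\varphi_n$ of Lemma \ref{Amb} (which kill the vector field near where it vanishes without destroying Sobolev regularity, by Lemma \ref{calculushodge}) and by approximating the $\Lpc$-coefficients by test functions using Lemma \ref{densitytestbargentens}, checking that all terms in \eqref{bigclaim} pass to the limit — the left side because $X\otimes Y\cdot\nu_f\in\Lp^1_{\mathrm{loc}}(\abs{\Hhess f})$ by construction and \eqref{eq:boundmass}, the right side by the calculus rules for $\dive$ and $\nabla_\cdot$.

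The inequality \eqref{ineqmeas} is then obtained by testing: for $v\in\tanXcap$, write $v=\sum_i g_i v_i^k$ on $A_k$ and, for $h\geq 0$ with bounded support, use \eqref{bigclaim} with $X=Y$ a truncation of (a Sobolev approximation of) $v$ together with \eqref{monotest}; passing to the limit gives $\int h\, v\otimes v\cdot\nu_f\,\dd\abs{\Hhess f}\geq\kappa\int h|v|^2\,\dd\mm$ for all such $h$, which is \eqref{ineqmeas}. Finally, for $f\in\HSs$, the global integrability $X\otimes X\cdot\nu_f\in\Lp^1(\abs{\Hhess f})$ and the explicit bound \eqref{eq:boundmass} follow by taking $h\nearrow 1$ (via good cut-offs, cf.\ Remark \ref{remconv}) in the estimate produced by the rescaling argument, the term $2\kappa^-|X|^2$ accounting for the sign of $\kappa$; polarization then upgrades this to $X\otimes Y\cdot\nu_f\in\Lp^1(\abs{\Hhess f})$ and extends \eqref{bigclaim} to all $h\in\Ss\cap\Lpi$ by dominated convergence. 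The bookkeeping across the partition $\{A_k\}$, and the fact that a point may see infinitely many $A_k$'s (so $\abs{\Hhess f}$ is only $\sigma$-finite, not Radon), is what forces the argument to be done at the level of the bounded Borel pieces rather than globally — this is the recurring technical difficulty flagged in the introduction.
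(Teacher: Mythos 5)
Your overall architecture (Riesz--Daniell--Stone applied to the positive functionals, the decomposition of Theorem \ref{captangent}, gluing over the $A_k$'s, and Lemma \ref{Amb} for locality) is the same as the paper's, but two steps as you describe them have genuine gaps. First, the positivity needed to invoke Riesz--Daniell--Stone is $\mathcal G_f(h,X,X)\ge\kappa\int_\XX h\abs{X}^2\dd\mm$ for $X=v_i^k$ an arbitrary test vector field (and, eventually, for arbitrary $X\in\WHHSs\cap\tanXinf$), whereas the hypothesis \eqref{monotest} only gives it for $X=\nabla g$ with $g\in\TestF(\XX)$. You assert the bound for the basis fields directly; this extension is not automatic and is an essential intermediate step: one must first build the measures $\mu_{\nabla g}$, prove $\mu_{\nabla g}\ll\capa$, polarize, and then localize so as to reduce $X=\sum_i f_i\nabla g_i$ to the constant-coefficient case. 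Moreover, positivity of the ``trace'' $\sum_i\mathcal G_f(h,v_i^k,v_i^k)$ is strictly weaker than positivity in each direction and would not let you represent the individual off-diagonal functionals; the mechanism producing the signed measures $\mu^k_{ij}$ is polarization of the quadratic form $X\mapsto\mu_{X,X}$, not the rescaling trick from the proof of Proposition \ref{kett}, which plays no role in this theorem.

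Second, your route for verifying \eqref{bigclaim} for general $X,Y$ --- expand $X=\sum_i g_iv_i^k$ with $g_i\in\Lpc$ on $A_k$ and approximate the coefficients via Lemma \ref{densitytestbargentens} --- would be hard to close: $\Lp^p(\mu)$-convergence of the coefficients gives no control on the terms $\dive(hY)$ and $\nabla_YX$ on the right-hand side of \eqref{bigclaim}. The actual difficulty is not that $g_iv_i^k$ fails to be Sobolev (one takes $g_i=X\,\cdot\,v_i^k\in\Ss\cap\Lpi$, so that $\tilde X\defeq\sum_i(X\,\cdot\,v_i^k)v_i^k\in\TestV(\XX)$); it is that $\tilde X=X$ only $\capa$-a.e.\ on the merely Borel set $A_k$, so one needs the strong locality property $\mu_{X,Y}\mres\{\abs{X}=0\}=0$, which is exactly what Lemma \ref{Amb} combined with the continuity of $X\mapsto\mathcal G_f(h,X,Y)$ delivers when applied to $X-\tilde X$. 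You do have the right tool on the table, but the limit to take is $\mathcal G_f(h,\varphi_n X,Y)\to0$ for a field vanishing on $A_k$, not an approximation of the $\Lpc$-coefficients. With these two repairs your plan coincides with the paper's proof; the derivations of \eqref{ineqmeas} and of \eqref{eq:boundmass} by letting $h\nearrow1$ are then fine.
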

\begin{proof}
We divide the proof in several steps.
\medskip
\\\textbf{Step 1.} We define for  $h\in\Ss\cap\Lpi$ with bounded support and $X,Y\in\WHHSs\cap\tanXinf$,
$$
\mathcal{G}_f(h,X,Y)\defeq\int_\XX - \frac{1}{2}\dive(h X)\nabla f\,\cdot\, Y-\frac{1}{2}\dive(h Y)\nabla f\,\cdot\, X-\frac{1}{2}h \nabla f \,\cdot\,\nabla( X\cdot Y)\dd{\mass}
$$
and we write for simplicity $$ \mathcal{G}_f(h,X)\defeq  \GG_f(h,X,X).$$ We shall frequently use the fact that for given $f,h,Y$ as above
\begin{equation}
\label{eq:contG}
\begin{split}
&\text{the map $X\mapsto  \mathcal{G}_f(h,X,Y)\in\RR$ is continuous w.r.t.\ the $\WHHSs$-norm}\\
&\text{on sets of vector fields with uniformly bounded $L^\infty$-norm,}
\end{split}
\end{equation} and similarly for $Y$.

Notice that $\mathcal{G}_f(h,X,Y)$ equals the right hand side of \eqref{bigclaim} for  $X,Y\in\WHHSs\cap\tanXinf$ and $h\in\Ss\cap\Lpi$, as a consequence of a simple approximation argument on $f$.
Indeed, as $h$ has bounded support, a locality argument shows that it is not restrictive to assume also $f\in\HSs$, then we can approximate $f$ in the $\HSs$ topology with functions in $\TestF(\XX)$ and see the equality of the two quantities. This shows also that, as $\hess f\ge \kappa$,
\begin{equation}\label{monoton}
	\mathcal{G}_f(h,\nabla g)\ge \kappa\int_\XX h\abs{\nabla g}^2\dd{\mass}
\end{equation}  if $h\in\TestF(\XX)$ is non negative and has bounded support and $g\in\TestF(\XX)$ (by approximation, \eqref{monoton} continues to hold if  $h\in\Ss\cap\Lpi$ is non negative and  has bounded support). 
This argument also shows that  
\begin{equation}\label{bilinapprox}
	\sum_{i,j=1}^m \GG_f( h f_i g_j,X_i,Y_j)=\sum_{i,j=1}^m \GG_f( h  ,f_i X_i,g_j Y_j)
\end{equation}
for every $h\in\Ss\cap\Lpi$ with bounded support, $\{f_i\}_i,\{g_i\}_i\subseteq\HSs\cap\Lpi$ and $\{X_i\}_i,\{Y_i\}_i\subseteq\WHHSs\cap\tanXinf$ (we are implicitly using Lemma \ref{calculushodge}).
Clearly, $\GG_f(h,\,\cdot\,,\,\cdot\,)$ is symmetric and $\RR$-bilinear for every $h\in\Ss\cap\Lpi$.
\medskip
\\\textbf{Step 2.} By \eqref{monoton}, the Riesz–-Daniell–-Stone Theorem yields that for every $g\in\TestF(\XX)$ there exists a unique Radon measure $\mu_{\nabla g}$ such that
\begin{equation}\label{defmeas}
\mathcal{G}_f(h,\nabla g)=\int_\XX h\dd{\mu_{\nabla g}}\qquad \text{for every }h\in\LIPbs (\XX).
\end{equation} 
Recalling \eqref{monoton}, we have that 
\begin{equation}\label{monoton1}
\mu_{\nabla g}\ge \kappa\abs{\nabla g}^2\dd{\mass}.
\end{equation}
We show now that $\mu_{\nabla g}\ll\capa$. Being $\mu_{\nabla g}$ a Radon measure, it is enough to show that if $K$ is a compact set such that $\capa(K)=0$, then $\mu_{\nabla g}(K)=0$. As $K$ is compact, we can find a sequence $\{u_n\}_n\subseteq\HSs\cap\LIPb(\XX)$ with $u_n= 1$ on a neighbourhood $K$, $0\le u_n\le 1$ and $\Vert u_n\Vert_{\HSs}\rightarrow 0$ (see e.g.\ \cite[Lemma 5.4]{Bjorn-Bjorn11} or \cite[Lemma 2.3]{BGBV}). 
Also, it is easy to see that we can assume with no loss of generality also that $\{u_n\}_n\subseteq\LIPbs(\XX)$ have uniformly bounded support.
Therefore, using \eqref{monoton1}, dominated convergence and \eqref{defmeas}, 
$$0\leq \mu_{\nabla g}(K)-\kappa\int_K \abs{\nabla g}^2\dd{\mass}\le \int_\XX u_n\dd{\mu_{\nabla g}}-\kappa\int_\XX u_n \abs{\nabla g}^2\dd{\mass}\stackrel{\eqref{eq:contG}}\rightarrow 0,$$
having used also that $\mm(K)$=0 in the last step. In particular, $ \mu_{\nabla g}(K)= \mu_{\nabla g}(K)-\kappa\int_K \abs{\nabla g}^2\dd{\mass}$ and thus the above proves that $\mu_{\nabla g}\ll\capa$, as claimed. 

As $\mu_{\nabla g}\ll\capa$, using dominated convergence, we can show that $$ \mathcal{G}_f(h,\nabla g)=\int_\XX h\dd{\mu_{\nabla g}}\qquad \text{for every }h\in\Ss\cap\Lpi\text{ with bounded support}$$ 
where we implicitly take the quasi continuous representative of $h$.
We define by polarization the signed Radon measure, absolutely continuous with respect to $\capa$,
$$ \mu_{\nabla g_1,\nabla g_2}\defeq \frac{1}{4}\left(\mu_{\nabla(g_1+g_2)}-\mu_{\nabla(g_1-g_2)}\right)\qquad\text{if }g_1,g_2\in\TestF(\XX)$$
so that, by the properties of $(X,Y)\mapsto\GG_f(h,X,Y)$, $$ \mathcal{G}_f(h,\nabla g_1,\nabla g_2)=\int_\XX h\dd{\mu_{\nabla g_1,\nabla g_2}}\qquad \text{for every }h\in\HSs\cap\Lpi\text{ with bounded support}.$$
Notice that the map $(g_1,g_2)\mapsto \mu_{\nabla g_1,\nabla g_2}$ is symmetric  and $\RR$-bilinear by its very definition.
\medskip
\\\textbf{Step 3.} We show that for every $X\in\WHHSs\cap\tanXinf$ and $h\in\HSs\cap\Lpi$ non negative and with bounded support, 
\begin{equation}\label{tmp11}
\mathcal G_f(h,X,X)\ge \kappa \int_\XX h\abs{X}^2\dd{\mass}.
\end{equation}
By dominated convergence and \cite[Lemma 3.2]{BGBV}, we see that it is enough to assume $X\in\TestV(\XX)$, say 
	\begin{equation}\notag
	X=\sum_{i=1}^m f_i\nabla g_i\qquad\text{with }\{f_i\}_i\subseteq\Ss\cap\Lpi\text{ and }\{g_i\}_i\subseteq\TestF(\XX).
\end{equation}

By  the properties of the map $(X,Y)\mapsto\GG_f(h,X,Y)$ (in particular, recall \eqref{bilinapprox}) we see that \eqref{tmp11} will follow from 
$$\sum_{i,j=1}^m \GG_f( h f_i f_j,\nabla g_i,\nabla g_j)\ge \kappa\int_\XX h\abs{X}^2\dd{\mass}.$$
It suffices then to show then that, as measures,
$$ \sum_{i,j=1}^m  f_i f_j\mu_{\nabla g_i,\nabla g_j}\ge \kappa\sum_{i,j=1}^m f_i f_j\nabla g_i\,\cdot\,\nabla g_j{\mass}.$$
By dominated convergence and localizing, we further reduce to the case in which $f_i=c_i\in\RR$ for every $i=1,\dots,m$, this is to say, as measures,
$$ \sum_{i,j=1}^m  c_i c_j\mu_{\nabla g_i,\nabla g_j}\ge \kappa \sum_{i,j=1}^m c_i c_j\nabla g_i\,\cdot\,\nabla g_j{\mass},$$
that follows by \eqref{monoton1} with $\sum_{i=1}^m c_i g_i$ in place of $g$.
\medskip
\\\textbf{Step 4.} Building upon \eqref{tmp11} and arguing as in \textbf{Step 2}, we can define the Radon measure $\mu_{X,Y}$ whenever $X,Y\in\WHHSs\cap\tanXinf$, such that 
$$
\int_\XX h\dd{\mu_{X,Y}}=\mathcal G_f(h,X,Y)\qquad\text{for every }h\in\LIPbs(\XX).
$$
More precisely, we first define $\mu_{X,X}$ for $X\in\WHHSs\cap\tanXinf$ and then we define $\mu_{X,Y}$ for $X,Y\in\WHHSs\cap\tanXinf$ by polarization, taking into account that  $\GG_f(h,X,Y)$ is symmetric in $X$ and $Y$.  
 
By the properties of $(X,Y)\mapsto\GG_f(h,X,Y)$ (in particular, recall \eqref{bilinapprox}) it follows that, if for some $m$ and $l=1,2$, $$X_l=\sum_{i=1}^m f_i^l\nabla g_i^l\qquad\text{with }\{f_i^l\}_i\subseteq\Ss\cap\Lpi\text{ and }\{g_i^l\}_i\subseteq\TestF(\XX),$$
then we have that $$	\mu_{X_1,X_2}=\sum_{i,j=1}^m f_i^1 f_j^2\mu_{\nabla g_i^1,\nabla g_j^2}.$$
In particular, this definition is coherent with the one given in \textbf{Step 2}.  

Recall that, by \eqref{tmp11}, if  $X\in \WHHSs\cap\tanXinf$,
\begin{equation}\label{positv}
\mu_{X,X}\ge \kappa\abs{X}^2\mass.
\end{equation}
Also, as in \textbf{Step 2}, we show that $$\mu_{X,Y}\ll\capa,$$ whenever $X,Y\in\WHHSs\cap\tanXinf$.
\medskip
\\\textbf{Step 5.} We use now Theorem \ref{captangent} to take a partition $\{A_k\}_{k\in\NN}$ and, for any $k$, an orthonormal basis of $\tanXcap$ on $A_k$  $v^k_1,\dots, v^k_{n(k)}$. Fix for the moment $k$ and define, for $i,j=1,\dots,n(k)$, $$\mu_{i,j}^k\defeq\mu_{v^k_i,v^k_j}\mres A_k\qquad\text{and}\qquad\mu^k\defeq \sum_{i,j=1}^{n(k)} |{\mu_{i,j}^k}|.$$ Notice that this is a good definition as $\Prbar(v^k_i)\in \WHHSs$ for every $i=1,\dots,n(k)$ and that the measures above are finite signed measures absolutely continuous with respect to $\capa$. 

We define $$\tilde{\nu}^k_f\defeq \sum_{i,j=1}^{n(k)} v_i^k\otimes v_j^k\dv{\mu_{i,j}^k}{\mu^k}$$
then $$\nu^k_f\defeq\frac{1}{|{\tilde{\nu}^k_f}|} \tilde{\nu}^k_f\qquad\text{and}\qquad|{\Hhess^k f}|\defeq {|{\tilde{\nu}^k_f}|}\mu^k.$$
Finally, $$\nu_f\defeq \sum_k\nu_f^k\qquad\text{and}\qquad|{\Hhess f}|\defeq\sum_k|{\Hhess^k f}|.$$
Clearly, $\abs{\Hhess f}$ is a $\sigma$-finite measure, $\abs{\Hhess}\ll\capa$, $\abs{\nu_f}=1\ \abs{\Hhess f}$-a.e.\ and $\nu_f$ is symmetric.
\medskip
\\\textbf{Step 6.} Let $X,Y\in\WHHSs\cap\tanXinf$ and $h\in\HSs\cap\Lpi$ with bounded support. We verify that $ X\otimes Y\,\cdot\,\nu_f\in\Lploc^1(\abs{\Hhess f})$ and that \eqref{bigclaim} holds. By polarization, there is no loss of generality in assuming that $X=Y$ and we can also assume, by linearity, that $h$ is non negative. Notice indeed that the right hand side of \eqref{bigclaim} is equal to $\GG_f(h,X,Y)$ (recall \textbf{Step 1}) and hence is symmetric in $X,Y$. 

Consider the Borel partition $\{A_k\}_k$ as in Theorem \ref{captangent}. 
Assume for the moment that for every $k$, for every $h\in\LIPbs(\XX)$,
\begin{equation}\label{ambclaim}
	\int_{A_k} h\dd{\mu_{X,X}}=\int_{A_k} h X\otimes X\,\cdot\,\nu_f\dd{\abs{\Hhess f}}
\end{equation}
(notice that the restriction of ${\abs{\Hhess f}}$ to $A_k$ is the finite measure ${\abs{\Hhess^k f}}$, so the right hand side is well defined).
Then, it holds that 
\begin{equation}\label{ambclaimabs}
	\abs{X\otimes X\,\cdot\,\nu_f}\abs{\Hhess f}=\abs{\mu_{X,X}}
\end{equation}
that yields local integrability.
We can then compute, by dominated convergence and \eqref{ambclaim} (recall that $h$ has bounded support), 
\begin{equation}\notag
\begin{split}
\GG_f(h,X,X)&=\int_\XX h\dd{\mu_{X,X}}=\sum_k \int_{A_k} h\dd{\mu_{X,X}}\\&=\sum_k \int_{A_k} h X\otimes X\,\cdot\,\nu_f\dd{\abs{\Hhess f}}=\int_\XX h X\otimes X\,\cdot\,\nu_f\dd{\abs{\Hhess f}},
\end{split}
\end{equation}
that is \eqref{bigclaim}.

We show then \eqref{ambclaim}. Fix $k$ and recall the notation of \textbf{Step 6}. Notice that, considering the left hand side of \eqref{ambclaim}, we have, by the very definition of $\nu_f$ and $\abs{\Hhess f}$, on $A_k$
\begin{equation}\notag
\begin{split}
 X\otimes X\,\cdot\,\nu_f{\abs{\Hhess f}}=\sum_{i,j=1}^{n(k)} X\otimes X\,\cdot\, v_i^k\otimes v_j^k{\mu_{i,j}^k}=\sum_{i,j=1}^{n(k)} X\,\cdot\, v_i^k X\,\cdot\, v_j^k {\mu_{v_i^k,v_j^k}^k}=\mu_{\tilde{X},\tilde{X}},
\end{split}
\end{equation}
where $\tilde{X}\defeq \sum_{i=1}^{n(k)}( X\,\cdot\, v_i^k) v_i^k\in\TestV(\XX)$ satisfies then $\tilde{X}=X\ \capa$-a.e.\ on $A_k$.

As we have reduced ourselves to check that $\mu_{X,X}\mres A_k=\mu_{\tilde{X},\tilde{X}}\mres A_k$, taking into account also the bilinearity of the map $$(\WHHSs\cap\tanXinf)^2\ni (X,Y)\mapsto \mu_{X,Y},$$ we see that it is enough to show that
\begin{equation}\notag
\text{ for every $X,Y\in\WHHSs\cap\tanXinf$ we have}\qquad \mu_{X,Y}\mres \{\abs{X}=0\}=0.
\end{equation}

Let $\{\varphi_n\}_n\subseteq\HSs$  be as in Lemma \ref{Amb} for the vector field $X\in\WHHSs\cap\tanXinf$.
We compute, if $h\in\LIPbs(\XX)$, by \eqref{bilinapprox},
\begin{equation}\notag
	\int_\XX h\varphi_n \dd{\mu_{X,Y}}=\GG_f(h\varphi_n,X,Y)=\GG_f(h,\varphi_nX,Y).
\end{equation}
By the very definition, $\varphi_n(x)\searrow \chi_{\{\abs{X}=0\}}\ \capa$-a.e.\ so that, by dominated convergence, 
$$ \int_\XX h\varphi_n \dd{\mu_{X,Y}}\rightarrow \int_\XX h\chi_{\{\abs{X}=0\}}\dd{\mu_{X,Y}}.$$
On the other hand, as $\varphi_n X\rightarrow 0$ in the $\WSHs$ topology (Lemma \ref{Amb}), we have $$\GG_f(h,\varphi_nX,Y)\rightarrow 0$$ by \eqref{eq:contG}. Therefore, for every $h\in\LIPbs(\XX)$,  $$\int_\XX h\chi_{\{\abs{X}=0\}}\dd{\mu_{X,Y}}=0,$$ which means $\mu_{X,Y}\mres\{\abs{X}=0\} =0$.
%
%
\medskip
\\\textbf{Step 7.} We prove \eqref{ineqmeas}. By a locality argument, we reduce ourselves to the case $v\in\tanXcapinf$. By density and dominated convergence, it is enough to show \eqref{ineqmeas} for $v\in\WHHSs\cap\tanXinf$. By \eqref{positv} and \eqref{ambclaim} , if $v\in\WHHSs\cap\tanXinf$, $$v\otimes v \,\cdot\,\nu_f\abs{\Hhess f}=\mu_{v,v}\ge \kappa\abs{v}^2\mass,$$ that proves the claim.
\medskip
\\\textbf{Step 8.} We prove the last claim. Again, we assume with no loss of generality that $X=Y$. It is enough to show that if moreover $f\in\HSs$, then  $ X\otimes X\,\cdot\,\nu_f\in\Lp^1(\abs{\Hhess f})$, then the rest will follow from dominated convergence. 
The integrability follows from \eqref{ambclaimabs} if we show that $\mu_{X,X}$ is a finite signed measure. Inequality \eqref{positv} implies that the measure $\mu_{X,X}-\kappa \abs{X}^2\mass$ is non negative, but now, using an immediate approximation argument and monotone convergence together with \eqref{bigclaim}, we see that it is also finite. As $\abs{X}^2\mass$ is a finite measure, we see that $\mu_{X,X}$ is a finite signed measure and that \eqref{eq:boundmass} holds.
\end{proof}
\subsection{Ricci tensor}\label{sectionric}
As done for the Hessian, we give a fine meaning the the Ricci tensor defined in \cite{Gigli14}. Namely, we represent the Ricci tensor as a product of a $\capa$-tensor field and a $\sigma$-finite measure that is absolutely continuous with respect to $\capa$. As in the proof of Theorem \ref{hessrepr}, we are going to use a version of Riesz representation Theorem for positive functional, this time leveraging on the bound from below for the Ricci tensor ensured, in a synthetic way, by the  definition of the $\RCD$ condition. 

\medskip

 We recall  now the distributional definition of the objects that we are going to need and, to this aim, we recall  also that the definition of $\VV$ is in \eqref{oldtest}.
\begin{thm}[{\cite[Theorem 3.6.7]{Gigli14}}]\label{GigRic}
Let $(\XX,\dist,\mass)$ be an $\RCD(K,\infty)$ space. There exists a unique continuous map 
\begin{equation}\notag
\Ric:\WHHSs^2\rightarrow\Meas(\XX)
\end{equation}
such that for every 
$X,Y\in \VV$ it holds
\begin{equation}\label{rictest}
\Ric(X,Y)\defeq\DDelta \frac{X\,\cdot\, Y}{2}+\left( \frac{1}{2} X\,\cdot\,\DeltaH Y+ \frac{1}{2} Y\,\cdot\,\DeltaH X-\nabla X\,\cdot\,\nabla Y\right)\mass.
\end{equation}
Such map is bilinear, symmetric and satisfies
\begin{align}
\Ric(X,X)&\ge K\abs{X}^2\mass; \label{ricpos}\\
\int_\XX \dd{\Ric(X,Y)}&=\int_\XX \diff X\,\cdot\diff Y+\delta X\,\cdot\,\delta Y-\nabla X\,\cdot\,\nabla Y\dd{\mass};\notag\\
\Vert \Ric(X,Y)\Vert_{\TV} &\le2 \sqrt{\EE_\HG(X)+K^-\Vert X\Vert^2_{\tanX}}\sqrt{\EE_\HG(Y)+K^-\Vert Y\Vert^2_{\tanX}};\notag
\end{align}
for every $X,Y\in\WHHSs$.
\end{thm}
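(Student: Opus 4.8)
\emph{Proof strategy.} The plan is to first define $\Ric$ on the smaller space $\VV\times\VV$ by \emph{declaring} it to equal the right hand side of \eqref{rictest}, and then to extend it to $\WHHSs\times\WHHSs$ by a continuity estimate together with the density of $\VV$ in $\WHHSs$. The first, and most delicate, point is that for $X,Y\in\VV$ the expression \eqref{rictest} is a genuine finite signed measure. Writing $X=\sum_i f_i\nabla g_i$ and $Y=\sum_j h_j\nabla\ell_j$ with all functions in $\TestF(\XX)$, the scalar $X\cdot Y$ is a finite sum of terms $f_ih_j\,\nabla g_i\cdot\nabla\ell_j$; here I would invoke the structural fact that $\nabla g_i\cdot\nabla\ell_j$ lies in the domain of the measure valued Laplacian $\DDelta$ (equivalently, after polarisation, that the Bochner/$\Gamma_2$ operator of an $\RCD$ space is representable by a measure), together with the Leibniz rule for $\DDelta$ against the bounded factors $f_ih_j\in\TestF(\XX)$, to conclude $X\cdot Y\in D(\DDelta)$ and hence that $\DDelta\tfrac{X\cdot Y}{2}$ is a finite measure. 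The remaining terms $X\cdot\DeltaH Y$, $Y\cdot\DeltaH X$, $\nabla X\cdot\nabla Y$ all lie in $\Lp^1(\mass)$, since for test vector fields $\DeltaH Y,\nabla X,\nabla Y\in\Lpt$ and $X,Y\in\tanXinf$. Bilinearity and symmetry in $(X,Y)$ are then immediate from the formula.

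The heart of the matter is the lower bound \eqref{ricpos}: with $Y=X$, the inequality $\Ric(X,X)\ge K\abs{X}^2\mass$ is precisely the vectorial Bochner inequality on $\RCD(K,\infty)$ spaces. For gradient fields $X=\nabla g$ it is the self-improved Bochner inequality
$$
\DDelta\frac{\abs{\nabla g}^2}{2}\ \ge\ \Big(\abs{\hess g}^2+\nabla g\cdot\nabla\Delta g+K\abs{\nabla g}^2\Big)\mass\qquad\text{for }g\in\TestF(\XX),
$$
which is one of the pillars of the calculus on these spaces; the case of a general $X\in\VV$ then follows by a localisation and polarisation argument carried out within $\VV$. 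I would simply invoke this from the existing theory.

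Granting \eqref{ricpos}, the continuity estimate is soft. I would consider $\widehat\Ric(X,Y)\defeq\Ric(X,Y)+2K^-(X\cdot Y)\mass$, which satisfies $\widehat\Ric(X,X)\ge(K+2K^-)\abs{X}^2\mass=\abs{K}\,\abs{X}^2\mass\ge0$, so that $\widehat\Ric$ is a symmetric, positive semidefinite, measure valued bilinear form on $\VV$. Testing against nonnegative bounded functions and using Cauchy--Schwarz yields
$$
\big\Vert\widehat\Ric(X,Y)\big\Vert_{\TV}\ \le\ \widehat\Ric(X,X)(\XX)^{1/2}\,\widehat\Ric(Y,Y)(\XX)^{1/2}.
$$
Integrating \eqref{rictest} over $\XX$ (the $\DDelta$-term carrying zero total mass, as $\tfrac{X\cdot Y}{2}\in D(\DDelta)$ with enough decay for $X,Y\in\VV$, e.g.\ by testing against good cut-off functions) gives at once the integral identity $\int\dd\Ric(X,Y)=\int\diff X\cdot\diff Y+\delta X\cdot\delta Y-\nabla X\cdot\nabla Y\dd\mass$, whence $\widehat\Ric(X,X)(\XX)=2\EE_\HG(X)-\int\abs{\nabla X}^2\dd\mass+2K^-\Vert X\Vert_{\tanX}^2\le2\big(\EE_\HG(X)+K^-\Vert X\Vert_{\tanX}^2\big)$. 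Combining the last two displays and reabsorbing the lower order term $2K^-(X\cdot Y)\mass$ produces the stated $\TV$ bound. Since $\VV$ is dense in $\WHHSs$ for the natural norm $X\mapsto(\Vert X\Vert_{\tanX}^2+\EE_\HG(X))^{1/2}$, this bound makes $(X,Y)\mapsto\Ric(X,Y)$ uniformly continuous on bounded subsets of $\VV\times\VV$, hence it extends uniquely to a continuous bilinear symmetric map on $\WHHSs\times\WHHSs$; the lower bound \eqref{ricpos} survives in the limit because $\abs{X_n}^2\to\abs{X}^2$ in $\Lp^1(\mass)$ along any approximating sequence, and the integral identity and $\TV$ bound likewise pass to the limit. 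Uniqueness of the whole construction is then automatic.

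The step I expect to be the genuine obstacle is the very first one: showing that $\tfrac{X\cdot Y}{2}\in D(\DDelta)$ for $X,Y\in\VV$, i.e.\ that the right hand side of \eqref{rictest} really is a measure. This relies on the non-trivial representability of the Bochner/$\Gamma_2$ operator by measures and on the Weitzenb\"ock-type identities relating $\DeltaH$, $\nabla$, $\diff$ and $\delta$ on test vector fields; once those facts and the Bochner inequality \eqref{ricpos} are in place, the passage from $\VV$ to $\WHHSs$ is routine functional analysis.
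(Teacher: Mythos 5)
This theorem is not proved in the paper at all: it is recalled verbatim from \cite[Theorem 3.6.7]{Gigli14}, so there is no in-paper argument to compare against. Your reconstruction does follow the route of the original proof in that reference: establish $X\cdot Y\in D(\DDelta)$ for $X,Y\in\VV$ via the measure-valued Bochner/$\Gamma_2$ theory and the Leibniz rule, read off \eqref{ricpos} as the (self-improved, vectorial) Bochner inequality, deduce the total-variation bound from Cauchy--Schwarz for the shifted nonnegative bilinear form, and extend by density of $\VV$ in $\WHHSs$. The two genuinely hard inputs --- that $\abs{X}^2\in D(\DDelta)$ for test $X$ and the vectorial Bochner inequality --- are invoked rather than proved, but that is precisely the content of the cited literature, so this is acceptable at the level of a proof sketch.

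One small quantitative slip: shifting by $2K^-(X\cdot Y)\mass$ is more than needed and does not recover the stated constant. With your choice, $\Vert\Ric(X,Y)\Vert_{\TV}\le\Vert\widehat\Ric(X,Y)\Vert_{\TV}+2K^-\Vert X\Vert_{\tanX}\Vert Y\Vert_{\tanX}$, and even after the two-term Cauchy--Schwarz regrouping this yields $2\sqrt{\EE_\HG(X)+2K^-\Vert X\Vert^2_{\tanX}}\sqrt{\EE_\HG(Y)+2K^-\Vert Y\Vert^2_{\tanX}}$ rather than the bound in the statement. The fix is to shift only by $K^-(X\cdot Y)\mass$, which already makes the form nonnegative since $\Ric(X,X)\ge K\abs{X}^2\mass\ge -K^-\abs{X}^2\mass$; then $\widehat\Ric(X,X)(\XX)=2\EE_\HG(X)-\int_\XX\abs{\nabla X}^2\dd\mass+K^-\Vert X\Vert^2_{\tanX}$, and combining the set-wise Cauchy--Schwarz estimate with the elementary inequality $\sqrt{a_1b_1}+\sqrt{a_2b_2}\le\sqrt{a_1+a_2}\sqrt{b_1+b_2}$ applied to the remainder $K^-\int_\XX\abs{X\cdot Y}\dd\mass\le\sqrt{K^-\Vert X\Vert^2_{\tanX}}\sqrt{K^-\Vert Y\Vert^2_{\tanX}}$ gives exactly the claimed constant. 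This does not affect the existence, uniqueness or continuity of the extension, only the sharpness of the displayed bound.
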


\begin{thm}\label{mainricc}
 Let $(\XX,\dist,\mass)$ be an $\RCD(K,N)$ space. 
 Then there exists a unique $\sigma$-finite measure $\abs{\Ric}$ that satisfies $\abs{\Ric}\ll\capa$ and a unique, up to  $\abs{\Ric}$-a.e.\ equality, symmetric tensor field $\omega\in\tanXcaptwo$ with $\abs{\omega}=1\ \abs{\Ric}$-a.e.\ such that $\Ric=\omega\abs{\Ric}$, in the sense that for every $X,Y\in \WHHSs$ we have that $X\otimes Y\,\cdot\,\omega\in\Lp^1(\abs{\Ric})$ and it holds that, as measures,
 \begin{equation}\label{bigclaim2}
 X\otimes Y\,\cdot\,\omega\abs{\Ric}=\Ric(X,Y).
 \end{equation}
Moreover
\begin{equation}\notag
\omega\abs{\Ric}\ge K \mathrm{g}\mass,
\end{equation}
in the sense that for every $v\in\tanXcap$, it holds, as measures,
\begin{equation}\label{ineqmeasr}
	v\otimes v\,\cdot\,\omega\abs{\Ric} \ge K\abs{v}^2{\mass}.
\end{equation}
Finally, as measures,
\begin{equation}\label{whyabs}
\abs{\Ric}=\sup_{X,Y}\abs{\Ric(X,Y)},
\end{equation}
where the supremum is taken among all vector fields $X,Y\in\WHHSs\cap\tanXinf$ such that $\Vert X\Vert_{\tanXinf}\le 1$ and $\Vert Y\Vert_{\tanXinf}\le 1$.
 \end{thm}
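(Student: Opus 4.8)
The plan is to adapt, almost verbatim, the proof of Theorem \ref{hessrepr}. The simplification here is that the bilinear object $\Ric(\cdot,\cdot)$ already takes values in Radon measures by Theorem \ref{GigRic}, so the Riesz--Daniell--Stone step becomes superfluous; the new work is to show that these measures are $\ll\capa$ and enjoy the strong locality
\[
\Ric(X,Y)\mres\{\abs{X}=0\}=0\qquad\text{for all }X,Y\in\WHHSs\cap\tanXinf,
\]
after which $\omega$ and $\abs{\Ric}$ are built exactly as in Step 5 of the proof of Theorem \ref{hessrepr} and \eqref{bigclaim2} is verified as in Step 6 there; uniqueness of $(\omega,\abs{\Ric})$ is then obtained as in Proposition \ref{uniquenesshess}, via Lemma \ref{densitytestbargentens}.

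The technical engine is a Leibniz rule for $\Ric$ against Sobolev scalar multipliers: for $\psi\in\Ss\cap\Lpi$, $X\in\WHHSs\cap\tanXinf$ and $Y\in\WHHSs$ one has $\psi X\in\WHHSs$ by Lemma \ref{calculushodge}, and
\[
\Ric(\psi X,Y)=\psi\,\Ric(X,Y).
\]
I would establish this first for $\psi,X,Y$ test, by expanding both sides through \eqref{rictest} and checking that all the terms produced by the Leibniz rules for $\DDelta,\DeltaH,\diff,\dive$ and $\nabla$ cancel (they must, since in the smooth setting $\Ric$ is tensorial and the cancellations are purely algebraic), and then extend to the stated generality using the continuity of $\Ric$ on $\WHHSs$ and of $\psi\mapsto\psi X$. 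I expect this verification — equivalently, that $\Ric$ is $\Ss\cap\Lpi$-bilinear — to be the main obstacle; once it is available, the rest is routine.

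Indeed, granting the Leibniz rule: for $\capa$-absolute continuity, given a compact $K$ with $\capa(K)=0$ pick $u_n\in\HSs\cap\LIPbs(\XX)$ with $0\le u_n\le 1$, $u_n\equiv 1$ on a neighbourhood of $K$ and $\Vert u_n\Vert_{\HSs}\to 0$; then $u_nX\to 0$ in $\WHHSs$ (Lemma \ref{calculushodge} and dominated convergence) for $X\in\WHHSs\cap\tanXinf$, whence $\int u_n\,\dd\abs{\Ric(X,X)}=\Vert\Ric(u_nX,X)\Vert_{\TV}\to 0$ by Theorem \ref{GigRic}, and since $u_n\ge\chi_K$ this forces $\abs{\Ric(X,X)}(K)=0$; by polarization, inner regularity and a truncation this gives $\Ric(X,Y)\ll\capa$ for all $X,Y\in\WHHSs$. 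For strong locality, with $\varphi_n=\tilde\varphi_n\circ\abs{X}$ as in Lemma \ref{Amb} one has $\varphi_nX\to 0$ in $\WHHSs$, so $\Ric(\varphi_nX,Y)\to 0$ in total variation, while $\Ric(\varphi_nX,Y)=\varphi_n\Ric(X,Y)\to\chi_{\{\abs{X}=0\}}\Ric(X,Y)$ in total variation by dominated convergence (the previous step is used here, $\Ric(X,Y)$ being finite and $\ll\capa$); comparing the two limits gives $\Ric(X,Y)\mres\{\abs{X}=0\}=0$.

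Finally, the representation and the remaining assertions follow the blueprint of Theorem \ref{hessrepr}. Using the partition $\{A_k\}_k$ and the orthonormal bases $\{v^k_i\}_i$ of Theorem \ref{captangent} (so that $\Prbar(v^k_i)\in\TestV(\XX)\subseteq\WHHSs$ and each $\Ric(v^k_i,v^k_j)\mres A_k$ is a finite signed measure $\ll\capa$), set $\mu^k_{i,j}\defeq\Ric(v^k_i,v^k_j)\mres A_k$, $\mu^k\defeq\sum_{i,j}\abs{\mu^k_{i,j}}$, $\tilde\omega^k\defeq\sum_{i,j}v^k_i\otimes v^k_j\,\dv{\mu^k_{i,j}}{\mu^k}$, $\omega^k\defeq\tilde\omega^k/\abs{\tilde\omega^k}$, $\abs{\Ric^k}\defeq\abs{\tilde\omega^k}\mu^k$, and $\omega\defeq\sum_k\omega^k$, $\abs{\Ric}\defeq\sum_k\abs{\Ric^k}$; this is $\sigma$-finite, $\ll\capa$, symmetric and $\abs{\omega}=1$ $\abs{\Ric}$-a.e. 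For $X,Y\in\WHHSs\cap\tanXinf$, strong locality gives $\Ric(X,Y)\mres A_k=\Ric(\tilde X,\tilde Y)\mres A_k$ with $\tilde X\defeq\sum_i(X\,\cdot\,v^k_i)v^k_i$, $\tilde Y\defeq\sum_j(Y\,\cdot\,v^k_j)v^k_j$, and the Leibniz rule together with $\RR$-bilinearity of $\Ric$ yields $\Ric(\tilde X,\tilde Y)=\sum_{i,j}(X\,\cdot\,v^k_i)(Y\,\cdot\,v^k_j)\Ric(v^k_i,v^k_j)$, which on $A_k$ equals $X\otimes Y\,\cdot\,\omega\abs{\Ric}$; summing over $k$ proves \eqref{bigclaim2}, the same identity gives $\abs{X\otimes X\,\cdot\,\omega}\abs{\Ric}=\abs{\Ric(X,X)}$ and hence $X\otimes Y\,\cdot\,\omega\in\Lp^1(\abs{\Ric})$ by the total variation bound of Theorem \ref{GigRic} (the case of general $X,Y\in\WHHSs$ following by continuity). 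Then \eqref{ineqmeasr} follows from \eqref{ricpos} as \eqref{ineqmeas} did from \eqref{positv} in Theorem \ref{hessrepr} (reduce to $v\in\WHHSs\cap\tanXinf$ by density and use $v\otimes v\,\cdot\,\omega\abs{\Ric}=\Ric(v,v)\ge K\abs{v}^2\mass$), and \eqref{whyabs} follows from \eqref{bigclaim2}, using $\abs{\Ric(X,Y)}\le\abs{X}\,\abs{Y}\,\abs{\Ric}$ for the upper bound and testing against the $v^k_i$ on the $A_k$ for the lower one.
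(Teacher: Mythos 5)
Your proposal is correct and follows essentially the same route as the paper: the same decomposition via Theorem \ref{captangent}, the same strong-locality argument via Lemma \ref{Amb}, and the same truncation $X_n=\tfrac{n}{n\vee|X|}X$ to pass from bounded to general elements of $\WHHSs$. The only difference is that the tensoriality $\Ric(\psi X,Y)=\psi\,\Ric(X,Y)$, which you flag as the main obstacle and propose to verify by direct cancellation, is simply quoted in the paper from \cite[Proposition 3.6.9]{Gigli14} and extended by exactly the approximation you describe, so no genuine obstacle remains there.
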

\begin{proof}
We divide the proof in several steps.
\medskip
\\\textbf{Step 1.} Uniqueness follows as in the proof of Proposition \ref{uniquenesshess},  by a localized version of Lemma \ref{densitytestbargentens}.
\medskip
\\\textbf{Step 2.} We remark that for every $X,Y\in\WHHSs$, it holds  $|\Ric(X,Y)|\ll\capa$, as a an immediate consequence of \eqref{rictest} together with a density and continuity argument (notice that it is enough to show that $\Ric(X,Y)(K)=0$ whenever $K$ is a compact set with $\capa(K)=0$).
\medskip
\\\textbf{Step 3.} We proceed now as in \textbf{Step 5} of the proof of Theorem \ref{hessrepr}.  In particular, we use Theorem \ref{captangent} to take a partition  of $\XX$,  $\{A_k\}_k$. We fix for the moment $k$ and take, (following Theorem \ref{captangent}), an orthonormal basis of $\tanXcap$ on $A_k$, $v^k_1,\dots, v^k_{n(k)}$.
Define, for $i,j=1,\dots,n(k)$, $$\mu_{i,j}^k\defeq\Ric(v^k_i,v^k_j)\mres A_k\qquad\text{and}\qquad\mu^k\defeq \sum_{i,j=1}^{n(k)} |{\mu_{i,j}^k}|.$$ 

We define $$\tilde{\omega}^k\defeq \sum_{i,j=1}^{n(k)} v_i^k\otimes v_j^k\dv{\mu_{i,j}^k}{\mu^k}$$
then $$\omega^k\defeq\frac{1}{|{\tilde{\omega}^k}|} \tilde{\omega}^k\qquad\text{and}\qquad|\Ric^k|\defeq {|{\tilde{\omega}^k}|}\mu^k.$$
Finally, $$\omega\defeq \sum_k\omega^k\qquad\text{and}\qquad\abs{\Ric}\defeq\sum_k|\Ric^k|.$$
Clearly, $\abs{\Ric}$ is a $\sigma$-finite measure, $\abs{\omega}=1\ \abs{\Ric}$-a.e.\ and $\omega$ is symmetric.
\medskip
\\\textbf{Step 4.} Let $X,Y\in\WHHSs$. We verify that $X\otimes Y\,\cdot\,\omega\in\Lp^1(\abs{\Ric})$ and that \eqref{bigclaim2} holds. This will be similar to \textbf{Step 6} of the proof of Theorem \ref{hessrepr} and we keep the same notation.
By polarization, there is no loss of generality in assuming that $X=Y$ and it is enough to show that for every $k$, as measures,
\begin{equation}\label{ambclaim1}
\Ric(X,X)\mres A_k= X\otimes X\,\cdot\, \omega\abs{\Ric}\mres A_k
\end{equation}
(notice that the left hand side of \eqref{ambclaim1} is a finite signed measure).

Assume for the moment  that also $X\in\tanXinf$.
Notice that \eqref{ambclaim1} also yields integrability (still in the case $X\in\tanXinf$), as it shows that
\begin{equation}\label{l1etv}
\Vert{X\otimes X\,\cdot\, \omega}\Vert_{\Lp^1(\abs{\Ric})}=\Vert \Ric(X,X) \Vert_{\TV}.
\end{equation}
Also, by \eqref{l1etv}, we see that the additional assumption $X\in\tanXinf$ is not restrictive: if $X\in\WHHSs$, we can find a sequence $\{X_n\}_n\subseteq \WHHSs\cap\tanXinf$ with $X_n\rightarrow X$ in the $\WSHs$ topology. For example, we can define  $$X_n\defeq\frac{n}{n\vee \abs{X}}X$$  $\WHHSs\cap\tanXinf\ni X_n\rightarrow X$ in the $\WSHs$ topology thanks to the calculus rules of Lemma \ref{calculushodge} and the  computation
$$
\abs{\nabla \left( \frac{n}{n\vee\abs{X} }\right)}\abs{X}= \chi_{\{\abs{X}\ge n\}} n \frac{\abs{\nabla \abs{X}}}{\abs{X}^2}\abs{X} \le \chi_{\{\abs{X}\ge n\}}\abs{ \nabla\abs{X}}\rightarrow 0\qquad\text{in }\Lpt.
$$
Then, by the continuity of $\Ric$ and \eqref{l1etv}, the sequence $\{X_n\otimes X_n\,\cdot\, \omega\}_n\subseteq{\Lp^1(\abs{\Ric})}$ is a Cauchy sequence, whose limit coincides then with $X\otimes X\,\cdot\, \omega$ so that this implies the general case.

We prove now \eqref{ambclaim1}, under the additional assumption $X\in\tanXinf$. 
We first remark that it holds  $$f\Ric(X,\,\cdot\,)=\Ric(f X,\,\cdot\,)\qquad\text{if }f\in\Ss\cap\Lpi.$$
This is a consequence of \cite[Proposition 3.6.9]{Gigli14} together with an approximation argument, see Lemma \ref{calculushodge} (here we use that $X\in\tanXinf$).  
Therefore, with the same computations of \textbf{Step 6} of the proof of Theorem \ref{hessrepr}, we see that 
$$X\otimes X\,\cdot\, \omega\abs{\Ric}\mres A_k=\Ric(\tilde{X},\tilde{X})\mres A_k $$
where $\tilde{X}\in\WHHSs\cap\tanXinf$ is such that $\tilde{X}=X\ \capa$-a.e.\ on $A_k$, so that  \eqref{ambclaim1} reduces to the locality relation 
\begin{equation}\notag
\Ric({X},{X})\mres A_k =\Ric(\tilde{X},\tilde{X})\mres A_k 
\end{equation}
whenever $X,\tilde{X}\in\WHHSs\cap\tanXinf$ are such that $\tilde{X}=X\ \capa$-a.e.\ on $A_k$.

By bilinearity, we can just show that if $X\in\WHHSs\cap\tanXinf$ is such that ${X}=0\ \capa$-a.e.\ on $A_k$, then $\Ric(X,Y)\mres A_k=0$ for every $Y\in\WHHSs\cap\tanXinf$.
Let $\{\varphi_n\}_n\subseteq\HSs\cap\Lpi$ be as in Lemma \ref{Amb} for the vector field $X\in\WHHSs\cap\tanXinf$ and let $h\in\LIPbs(\XX)$ (notice that $\varphi_n X\in\WHHSs$ by Lemma \ref{calculushodge}). 
We know that 
$$\int_\XX h\varphi_n\diff{\Ric(X,Y)}=\int_\XX h \diff{\Ric(\varphi_nX,Y)}.$$
By Lemma \ref{Amb} and the continuity of the map $\Ric$, the right hand side of the equation above converges to $0$, whereas the left hand side converges to (as in \textbf{Step 6} of the proof of Theorem \ref{hessrepr}) $$\int_\XX h\chi_{\{\abs{X}=0\}}\diff{\Ric(X,Y)}.$$
This is to say that for every $h\in\LIPbs(\XX)$ $$\int_\XX h\chi_{\{\abs{X}=0\}}\diff{\Ric(X,Y)}=0$$
which means that ${\Ric(X,Y)}\mres\{\abs{X}=0\}=0$, whence the claim.
\medskip
\\\textbf{Step 5.} Inequality \eqref{ineqmeasr}  follows by an approximation argument as in \textbf{Step 8} of the proof of Theorem \ref{hessrepr}, \eqref{bigclaim2} and \eqref{ricpos}.
\medskip
\\\textbf{Step 6.}
Equality \eqref{whyabs} follows by \eqref{bigclaim2} and a localized version of Lemma \ref{densitytestbargentens}.
\end{proof}
At the very end of \cite{Gigli14}, it has been asked how one may enlarge the domain of definition of the map $\Ric$, and, towards this extension, whether 
\begin{equation}\label{cdsasc}
	\sum_i f_i\Ric(X_i,Y)=\Ric\left( \sum_i f_i X_i,Y\right)
\end{equation}
whenever $X_1,\dots,X_n,Y\in\WHHSs$ and $f_i\in\Cb(\XX)$. It seems that basic algebraic manipulations based on the formulas involving the Ricci curvature as in Theorem \ref{GigRic} do not imply this  fact. However, exploiting  Theorem \ref{mainricc}, we immediately have an affirmative result to this question, at least in the finite dimensional case, and we record this result in the following proposition. More generally, Theorem \ref{mainricc} gives a natural way to enlarge the domain of definition of the map $\Ric$. 
\begin{prop}
	Let $(\XX,\dist,\mass)$ be an $\RCD(K,N)$ space. Let $X,Y,X_1,\dots,X_n\in\WHHSs$ and let $f_1,\dots,f_n\in \Cb(\XX)$ such that $X=\sum_{i=1}^n f_i X_i$. Then 
	$$
	\sum_{i=1}^n f_i\Ric(X_i,Y)=\Ric(X,Y).
	$$
\end{prop}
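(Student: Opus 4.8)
The plan is to read the identity directly off the polar decomposition $\Ric=\omega\,\abs{\Ric}$ of Theorem \ref{mainricc}. First I would apply that theorem to each pair $(X_i,Y)$: this gives $X_i\otimes Y\,\cdot\,\omega\in\Lp^1(\abs{\Ric})$ and $\Ric(X_i,Y)=X_i\otimes Y\,\cdot\,\omega\,\abs{\Ric}$, where all tensor fields are read through their fine representatives, which are well defined $\capa$-a.e.\ and hence $\abs{\Ric}$-a.e.\ since $\abs{\Ric}\ll\capa$. Because each $f_i\in\Cb(\XX)$ is bounded and, being continuous, coincides with its own quasi-continuous representative, I would multiply by $f_i$ and use bilinearity of the tensor product together with the $\Lpc$-linearity of the pointwise pairing to obtain $f_i\,\Ric(X_i,Y)=(f_iX_i)\otimes Y\,\cdot\,\omega\,\abs{\Ric}$, where $f_iX_i$ now denotes the product in the capacitary tangent module $\tanXcap$. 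Summing over $i$ yields
\[
\sum_{i=1}^n f_i\,\Ric(X_i,Y)=\Big(\big(\textstyle\sum_{i=1}^n f_iX_i\big)\otimes Y\,\cdot\,\omega\Big)\,\abs{\Ric},
\]
while $\Ric(X,Y)=X\otimes Y\,\cdot\,\omega\,\abs{\Ric}$. Hence the whole statement reduces to the identity of $\capa$-tensor fields $\sum_{i=1}^n f_iX_i=X$ $\capa$-a.e.\ (equality of fine representatives), which then holds $\abs{\Ric}$-a.e.\ automatically.

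To establish this last identity I would invoke the injectivity of $\Prbar$ on quasi-continuous vector fields (Proposition \ref{prbardef}). The fine representative of $X$ is quasi-continuous and satisfies $\Prbar(X)=X$ in $\tanX$; each fine representative of $X_i$ is quasi-continuous, and multiplying it by the bounded quasi-continuous function $f_i$ keeps it quasi-continuous, so $\sum_{i=1}^n f_iX_i$ is a quasi-continuous vector field as well. By Proposition \ref{prbardef} its image under $\Prbar$ is $\sum_{i=1}^n f_i\,\Prbar(X_i)=\sum_{i=1}^n f_iX_i=X$ in $\tanX$, which is exactly the hypothesis (valid $\mass$-a.e.). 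Thus $\sum_{i=1}^n f_iX_i$ and the fine representative of $X$ are two quasi-continuous vector fields with the same $\Prbar$-image, so they coincide $\capa$-a.e., as needed, and the proof closes.

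The main obstacle I anticipate is the one soft point in the second paragraph: verifying that the product of a bounded continuous (more generally, bounded quasi-continuous) function with a quasi-continuous vector field is again quasi-continuous, i.e.\ that $\Cqcvf$ is a module over the ring of bounded quasi-continuous functions. I expect this to follow by approximating the fine representative of $X_i$ with elements of $\TestVbar(\XX)$, in the same spirit as the justification of \eqref{qcrfactorizes}, together with the standard fact that a product of two bounded quasi-continuous scalar functions is quasi-continuous; alternatively it should be extractable from \cite{debin2019quasicontinuous}. It is worth stressing that this is precisely where the hypothesis $f_i\in\Cb(\XX)$ (rather than merely $f_i$ bounded Borel) enters: continuity is what supplies a canonical $\capa$-a.e.\ representative of $f_i$, without which neither the products $f_iX_i$ in $\tanXcap$ nor the $\capa$-a.e.\ identity $\sum_{i=1}^n f_iX_i=X$ would be well posed.
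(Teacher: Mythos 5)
Your argument is exactly the one the paper has in mind: the proposition is stated without proof as an immediate consequence of Theorem \ref{mainricc}, the point being precisely that the polar decomposition $\Ric=\omega\abs{\Ric}$ reduces everything to the $\capa$-a.e.\ identity $\sum_{i}f_i\,\qcrbar(X_i)=\qcrbar(X)$, which is how you proceed. The first paragraph of your proof is fine as written. The only soft spot is the one you flag yourself, and I would warn that the fix you sketch does not transfer verbatim: the justification of \eqref{qcrfactorizes} uses that the multiplier lies in $\Ss\cap\Lpi$, so that the product with an element of $\TestVbar(\XX)$ is again an element of $\TestVbar(\XX)$; a merely continuous bounded $f_i$ does not preserve $\TestVbar(\XX)$, so an extra step is needed before you can invoke injectivity of $\Prbar$ on quasi-continuous vector fields. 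Two standard ways to close this: (a) approximate $f_i$ locally uniformly by bounded Lipschitz functions (e.g.\ inf-convolutions, using that $\RCD(K,N)$ spaces are proper) and pass to the limit in $\tanXcap$; or, cleaner and entirely within the paper's toolkit, (b) work scalar-wise: on each set $A_k$ of Theorem \ref{captangent} expand $\qcrbar(X)=\sum_j(\qcrbar(X)\cdot v_j^k)\,v_j^k$, observe that the coefficients $\qcrbar(X)\cdot v_j^k$ and $\qcrbar(X_i)\cdot v_j^k$ are quasi-continuous scalar functions, that multiplying a quasi-continuous function by a continuous one preserves quasi-continuity, and conclude from the $\mass$-a.e.\ identity and the injectivity of $\Pr$ on quasi-continuous functions that $\qcrbar(X)\cdot v_j^k=\sum_i f_i\,\qcrbar(X_i)\cdot v_j^k$ holds $\capa$-a.e.\ on $A_k$. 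With either repair the proof is complete and coincides in substance with the intended one.
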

Notice also that an immediate consequence of Theorem \ref{mainricc}  (the point is \textbf{Step 4} of its proof, which relies on an approximation argument based on Lemma \ref{Amb}) is that for every $X,Y\in \WHHSs$,
$\Ric(X,Y)\mres \{|X|=0\}=0$, thus providing a different proof of the implication $3)\Rightarrow 1)$ of  \cite[Proposition 3.7]{Hannew} (see the comments at \cite[Pag. 3 and Pag. 4]{Hannew}).
\begin{rem}\label{remricc}
Exploiting to the representation of $\Ric=\omega|\Ric|$ given by Theorem \ref{mainricc}, we can easily give a meaning to the trace the \say{tensor} measure $\Ric$.
However we \emph{do not} expect that the trace of this polar measure  has a meaning to represent the scalar curvature, if one does not add artificial correction terms (cf.\ the characterization of the scalar curvature on smoothable Alexandrov spaces in \cite{lebedeva2022curvature}). 

First, already in the setting of a smooth weighted Riemannian manifold $(M,\dist_g,e^{-V}{\rm Vol}_g)$, $\Ric$ represents the modified Bakry-\'Emery $N$-Ricci curvature tensor, defined as
\begin{equation}
	\label{eq:beric}
	\mathrm{Ric}_N:=\left\{\begin{array}{ll}
		\mathrm{Ric}_g+ \hess_g(V) -\frac{\dd V\otimes \dd V}{N-n}&\qquad\text{ if }N>n,\\
		\mathrm{Ric}_g&\qquad\text{ if $N=n$ and $V$ is constant},\\
		-\infty&\qquad\text{ otherwise}.
	\end{array}\right.
\end{equation}
Nevertheless, even if we restrict ourselves to non-collapsed spaces (\cite{DPG17}), which play the role of \say{unweighted} spaces, we can see that looking at the scalar curvature as trace of $\Ric$ is not yet meaningful: for example $\Ric$ vanishes on sets of $0$ capacity, whereas the scalar curvature such behaviour is not expected  (e.g.\ if we want to have an analogue of Gauss Theorem for a two dimensional cone we have to allow the scalar curvature to recognize the singularity at the tip of the cone).
\fr
\end{rem}
\subsection{Riemann tensor}\label{sectionriem}
As done for Hessian and Ricci tensor, now we provide a representation for the Riemann curvature tensor defined in \cite{GigliRiemann}  as the product of a $\capa$-tensor field and a $\sigma$-finite measure that is absolutely continuous with respect to $\capa$. In order to do so, we again employ Riesz's representation Theorem for positive functional, and hence  we have to impose that the tensor representing the sectional curvature is bounded from below (hence, we will add the assumption of a bound on the distributional sectional curvature). Then, by standard algebra, we recover the full Riemann tensor out of the sectional curvatures.

\medskip

We follow \cite{GigliRiemann}  to define $\bnabla_X Y$, $\distrlie X Y $, $\ru X Y Z$ and $\rd X Y Z W$ on an $\RCD(K,\infty)$ space $(\XX,\dist,\mass)$. Even though we assume familiarity with \cite{GigliRiemann},  we recall briefly the (distributional) definitions. First, recall the definition of $\VV$ in \eqref{oldtest} . Then we have what follows.
\begin{enumerate}
	\item \textbf{Distributional covariant derivative.} If $X,Y\in\tanX$ with $X\in D(\dive)$ and at least one of $X$, $Y$ is in $\tanXinf$, then 	
\begin{equation}\notag
		\bnabla_X Y(W)\defeq -\int_\XX \nabla_X W\,\cdot\, Y+Y\,\cdot\,W \dive X\dd\mass\qquad\text{for every }W\in\VV.
\end{equation}
	
	\item \textbf{Distributional Lie bracket.} If $X,Y\in D(\dive)\cap\tanXinf$,
\begin{equation}\notag
	\distrlie{ X}{ Y}\defeq 	\bnabla_X Y-	\bnabla_Y X.
\end{equation}
	\item  \textbf{Distributional curvature tensor.} If $X,Y,Z\in\VV$, then 
\begin{equation}\notag
	\ru X Y Z\defeq \bnabla_X(\nabla_Y Z)- \bnabla_Y(\nabla_X Z)-\nabla_{[X,Y]} Z.
\end{equation}
	\item \textbf{Distributional Riemann tensor.} If $X,Y,Z,W\in\VV$, then
\begin{equation}\notag
	\rd X Y Z W (f)\defeq (	\ru X Y Z)(f W)\qquad\text{for every }f\in\TestF(\XX).
\end{equation}
\end{enumerate}
It is clear that the distributional covariant derivative and the distributional Lie bracket  coincide with the covariant derivative $\nabla_X Y$ and the Lie bracket $[X,Y]\defeq \nabla_X Y-\nabla_Y X$,  whenever both the  objects are defined. We are going to exploit this property throughout.

\begin{rem}\label{usefulcomputationspre}
We want to extend the definition of $\rd X Y Z W$ to the case $X,Y,Z,W\in\TestV(\XX)$ do not necessarily belong to $\VV$. Clearly, as $\TestV(\XX)\subseteq D(\dive)\cap\tanXinf$, the first two terms $\bnabla_X(\nabla_Y Z)(f W)-\bnabla_Y(\nabla_X Z)(f W)$ are still well defined. Also the third term $\nabla_{[X,Y]} Z$ makes sense for this choice of vector fields. Notice that in \cite{GigliRiemann}, $\bnabla_{\distrlie{X}{Y}} Z$ was used instead of  $\nabla_{[X,Y]} Z$. This clearly makes no substantial difference, but allows us to drop the request $\distrlie{X}{Y}=[X,Y]\in D(\dive)$, which is not granted if $X,Y\in\TestV(\XX)$ do not necessarily belong to  $\VV$,  cf.\ \cite[Lemma 2.4]{GigliRiemann}. 

Following the same lines, we see that $\rd X Y Z W$ makes sense  for every $X,Y,Z,W\in\WHHSs\cap\tanXinf$ and then $f\in\Ss\cap\Lpi$.
\fr
\end{rem}
\begin{rem}\label{usefulcomputations}
We do some trivial algebraic manipulations in order to deal with  the quantity $\rd X Y Z W (f)$, for $X,Y,Z,W\in\WHHSs\cap\tanXinf$ and $f\in\Ss\cap\Lpi$.
We have that 
\begin{align*}
	\rd X Y Z W (f)&=-\int_\XX \nabla_X (f W) \,\cdot\,\nabla_Y Z+ \nabla_Y Z\,\cdot\,(f W)\dive(X)\dd\mass \\&\quad +\int_\XX \nabla_Y (f W)\,\cdot\,\nabla_X Z+ \nabla_X Z\,\cdot\,(f W)\dive(Y)\dd\mass \\&\quad +\int_\XX f \nabla Z (\nabla_X Y-\nabla_Y X, W)\\&=
	-\int_\XX f  \nabla W (X,\nabla_Y Z) +\nabla f\,\cdot\, X \nabla Z(Y,W)+ f \nabla Z(Y,W)\dive(X)\dd\mass \\&\quad +\int_\XX f \nabla  W(Y,\nabla_X Z)+\nabla f\,\cdot\, Y \nabla Z(X,W)+ f\nabla Z( X,W)\dive(Y)\dd\mass \\&\quad +\int_\XX f \nabla Z (\nabla_X Y, W)-\int_\XX f \nabla Z (\nabla_Y X, W)\dd\mass. 
	\tag*{\fr}
\end{align*}
\end{rem}
In the sequel, we will \textbf{tacitly extend the definition} of ${\rd X Y Z W}$ to the case $X,Y,Z,W\in\WHHSs\cap\tanXinf$ and $f\in\Ss\cap\Lpi$, according to Remark \ref{usefulcomputationspre} and Remark \ref{usefulcomputations}.

For future reference, we recall here \cite[Proposition 2.7]{GigliRiemann}. Notice that an immediate approximation argument (recall Remark \ref{usefulcomputations}) allows us to extend the claim to the slightly larger class of vector fields and functions that we are considering. 
\begin{prop}[Symmetries of the curvature]\label{symms}
For any $X,Y,Z,W\in\WHHSs\cap\tanXinf$ and $f\in\Ss\cap\Lpi$ it holds:
	\begin{align*}
		\rd XYZW&=-\rd YXZW=\rd ZWXY,\\
		\ru XYZ&+\ru YZX+\ru ZXY=0,\\
		f\rd XYZW&=\rd{fX}{Y}{Z}{W}=\rd{X}{fY}{Z}{W}=\rd{X}{Y}{fZ}{W}=\rd{X}{Y}{Z}{fW}.
	\end{align*}
\end{prop}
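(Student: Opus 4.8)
The plan is to deduce everything from \cite[Proposition 2.7]{GigliRiemann}, where all of these identities are proved for $X,Y,Z,W\in\VV$, and then to propagate them to the larger class $X,Y,Z,W\in\WHHSs\cap\tanXinf$, $f\in\Ss\cap\Lpi$ by an approximation argument. Two of the listed identities cost essentially nothing: the antisymmetry $\rd XYZW=-\rd YXZW$ (equivalently $\ru XYZ=-\ru YXZ$) is immediate from the very definition, since interchanging $X$ and $Y$ changes the sign of the first two terms of $\ru XYZ$ and replaces $[X,Y]=\nabla_XY-\nabla_YX$ by $-[X,Y]$. The nontrivial statements are the pair symmetry $\rd XYZW=\rd ZWXY$, the first Bianchi identity, and the $f$-linearity in each slot, and for these I would use the $\VV$-case as a black box.

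First I would isolate the continuity that makes the approximation work. Inspecting the formula for $\rd XYZW(f)$ in Remark \ref{usefulcomputations}, every summand is the integral of a product consisting of a factor $f$ or $\nabla f\cdot(\,\cdot\,)$, a factor among $\nabla W,\nabla Z,\dive X,\dive Y$ (each controlled in $\Lpt$ by the $\WHHSs$-norm of the corresponding field, resp.\ by the $\Ss\cap\Lpi$-norm of $f$), and one or two factors among $X,Y,Z,W$ and $\nabla_XY,\nabla_YX,\nabla_YZ,\nabla_XZ$, the covariant derivatives being bounded in $\Lpt$ by $\Vert\nabla(\,\cdot\,)\Vert_{\Lpt}$ times the $\Lpi$-norm of the other entry. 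Hence, exactly as for \eqref{eq:contG} in the proof of Theorem \ref{hessrepr}, for fixed $Y,Z,W\in\WHHSs\cap\tanXinf$ and $f\in\Ss\cap\Lpi$ the map $X\mapsto\rd XYZW(f)$ is continuous with respect to the $\WHHSs$-norm on sets of vector fields with uniformly bounded $\Lpi$-norm, and symmetrically in each of the three remaining vector-field slots; moreover, for fixed $X,Y,Z,W$, the map $f\mapsto\rd XYZW(f)$ is continuous with respect to the $\Ss$-norm on sets of uniformly $\Lpi$-bounded functions. The same applies to $\ru XYZ(W)=\rd XYZW(1)$.

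Next I would set up the approximating sequences: given $X\in\WHHSs\cap\tanXinf$, pick $X_k\in\VV$ with $X_k\to X$ in $\WHHSs$ and $\sup_k\Vert X_k\Vert_{\tanXinf}<\infty$, and similarly, for $f\in\Ss\cap\Lpi$, take $f_k\in\TestF(\XX)$ with $f_k\to f$ in $\Ss$ and $\sup_k\Vert f_k\Vert_{\Lpi}<\infty$. By Lemma \ref{calculushodge} the products $f_kX_k$, $f_kY_k$, etc.\ stay in $\WHHSs\cap\tanXinf$ with uniformly bounded $\Lpi$-norm, so all objects appearing in the identities are well defined along the approximation. Then, starting from the identities of \cite[Proposition 2.7]{GigliRiemann} for $X_k,Y_k,Z_k,W_k\in\VV$ and $f_k\in\TestF(\XX)$ (using, for the last line, the first two relations to rewrite $\rd{f_kX_k}{Y_k}{Z_k}{W_k}$), I would let $k\to\infty$ and invoke the continuity above, changing one argument at a time and keeping the others frozen at their limits; this yields all three groups of identities in the stated generality.

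The one point that deserves care — and the main obstacle — is the construction of the approximants $X_k\in\VV$ with a \emph{uniform} $\Lpi$-bound, since $\VV$, unlike $\TestV(\XX)$, is not manifestly stable under the truncation $v\mapsto\tfrac1{1\vee|v|}v$. I would handle this either by truncating at the level of the generators $f_i\nabla g_i$ of a first, merely $\WHHSs$-convergent, $\VV$-approximation (replacing the $\TestF$-coefficients $f_i$ by suitable bounded post-compositions, still in $\TestF(\XX)$), or, more cleanly, by a two-stage argument: first extend all the identities from $\VV$ to $\TestV(\XX)$, which \emph{is} closed under $v\mapsto\tfrac1{1\vee|v|}v$ and where the truncation issue disappears, and only afterwards from $\TestV(\XX)$ to $\WHHSs\cap\tanXinf$ using Lemma \ref{densitytestbargentens}. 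Everything else is a routine continuity computation of the type already carried out in \textbf{Step 4} of the proof of Theorem \ref{mainricc}.
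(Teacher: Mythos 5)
Your proposal is correct and follows essentially the same route as the paper, which likewise takes \cite[Proposition 2.7]{GigliRiemann} as the base case on $\VV$ and extends it by an approximation argument resting on the explicit expression of Remark \ref{usefulcomputations} (the paper compresses this into the phrase ``an immediate approximation argument''). Your additional care about producing approximants with uniform $\Lp^\infty$ bounds, and the two-stage passage through $\TestV(\XX)$, is a reasonable way to make that compressed step fully rigorous.
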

\

The following definition has been implicitly proposed in \cite[Conjecture 1.1]{GigliRiemann}.
\begin{defn}\label{csscdas}
Let $(\XX,\dist,\mass)$ be an $\RCD(K,\infty)$ space. We say that $(\XX,\dist,\mass)$ has sectional curvature bounded below by $\kappa$, for some $\kappa\in\RR$, if for every $X,Y\in\TestV(\XX)$ and $f\in\TestF(\XX)$, $f\ge 0$, it holds
\begin{equation}\notag
	\rd X Y Y X(f)\ge \kappa \int_\XX f |X\wedge Y|^2\dd\mass.
\end{equation}
\end{defn}
\begin{rem}
It would be interesting to analyse the links between sectional curvature bounds in the sense of Definition \ref{csscdas} and in the sense of Alexandrov. This question is the content of  \cite[Conjecture 1.1]{GigliRiemann}.
\fr
\end{rem}

\begin{rem}\label{polarization}
It is well known that sectional curvatures (i.e.\ $\rd X Y Y X$) are sufficient to identify a unique full Riemann curvature tensor $\rd X Y Z W$. We write here an explicit expression, as we are going to need it in the sequel.
For $X,Y\in\WHHSs\cap\tanXinf$, set ${\bm{\mathcal K}} (X,Y)\defeq\rd X Y Y X$. We claim that
\begin{align*}
6	\rd X Y Z W&= 
	{\bm{\mathcal {\bm{\mathcal K}}}}(X+W,Y+Z)
	-{\bm{\mathcal {\bm{\mathcal K}}}}(X+W,Y)
	-{\bm{\mathcal {\bm{\mathcal K}}}}(X+W,Z)
	-{\bm{\mathcal {\bm{\mathcal K}}}}(Y+Z,X)\\&\qquad
	-{\bm{\mathcal K}}(Y+Z,W)
	+{\bm{\mathcal K}}(X,Z)
	+{\bm{\mathcal K}}(W,Y)
	-{\bm{\mathcal K}}(Y+W,X+Z)\\&\qquad
	+{\bm{\mathcal K}}(Y+W,X)
	+{\bm{\mathcal K}}(Y+W,Z)
	+{\bm{\mathcal K}}(X+Z,Y)
	+{\bm{\mathcal K}}(X+Z,W)\\&\qquad
	-{\bm{\mathcal K}}(Y,Z)
	-{\bm{\mathcal K}}(W,X).
\end{align*}
The claim follows by algebraic manipulation, by Proposition \ref{symms}.
See e.g.\ \cite[Lemma 4.3.3]{jost2008riemannian} for the expression.\fr
\end{rem}

\begin{thm}\label{riemrepr}
Let $(\XX,\dist,\mass)$ be an $\RCD(K,N)$ space with sectional curvature bounded below by $\kappa$, for some $\kappa\in\RR$. Then there exists a unique $\sigma$-finite measure $|\mathbf{Riem}|$ that satisfies $|\mathbf{Riem}|\ll\capa$ and a unique, up to $|\mathbf{Riem}|$-a.e.\ equality, tensor field $\nu\in \mathrm{L}^0_\capa(T^{\otimes 4}\XX)$ with $|\nu|=1\ |\mathbf{Riem}|$-a.e.\ such that for every $X,Y,Z,W\in \WHHSs\cap\tanXinf$ we have that $X\otimes Y\otimes Z\otimes W\,\cdot\,\nu\in \mathrm{L}^1(|\mathbf{Riem}|)$ and it holds
\begin{equation}\label{reprriemm}
	\int_\XX f X\otimes Y\otimes Z\otimes W\,\cdot\, \nu |\mathbf{Riem}|=\rd X Y Z W(f)\qquad\text{for every $f\in\Ss\cap\Lpi$}.
\end{equation}
For every $v,w\in\tanXcap$, $$v\otimes w\otimes w\otimes v\,\cdot\,\nu\ge \kappa |v\wedge w|^2.$$

The tensor field $\nu$ has the following symmetries. Let $\mathcal{I},\mathcal J,\mathcal{K}:\mathrm{L}^0_\capa(T^{\otimes 4}\XX)\rightarrow\mathrm{L}^0_\capa(T^{\otimes 4}\XX)$ be characterized as follows
\begin{align*}
	\mathcal I(v_1\otimes v_2\otimes v_3\otimes v_4)&\defeq v_2\otimes v_1\otimes v_3\otimes v_4\\
	\mathcal J(v_1\otimes v_2\otimes v_3\otimes v_4)&\defeq v_3\otimes v_4\otimes v_1\otimes v_2\\
 	\mathcal K(v_1\otimes v_2\otimes v_3\otimes v_4)&\defeq v_2\otimes v_3\otimes v_1\otimes v_4.
\end{align*}
Then, with respect to $|\mathbf{Riem}|$-a.e.\ equality,
\begin{align*}
\mathcal	I(\nu)&=-\nu\\
	\mathcal J(\nu)&=\nu\\
	\nu+\mathcal K(\nu)+\mathcal K^2(\nu)&=0.
\end{align*}
\end{thm}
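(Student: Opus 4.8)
The strategy is to adapt the proofs of Theorems \ref{hessrepr} and \ref{mainricc}, reducing the four-linear curvature operator to its sectional curvatures $\bm{\mathcal K}(X,Y)\defeq\rd XYYX$, promoting these to Radon measures via the lower bound and the Riesz--Daniell--Stone theorem, and then assembling $|\mathbf{Riem}|$ and $\nu$ on the Borel pieces $A_k$ of Theorem \ref{captangent}. First I would note that for $X,Y\in\TestV(\XX)$ the assumption of sectional curvature bounded below by $\kappa$ is precisely the statement that $f\mapsto\rd XYYX(f)-\kappa\int_\XX f\,|X\wedge Y|^2\dd\mass$ is a non-negative linear functional on the non-negative elements of $\TestF(\XX)$; Riesz--Daniell--Stone then yields a non-negative Radon measure representing it, so $\bm{\mathcal K}(X,Y)$ is represented by a signed Radon measure $\mu_{\bm{\mathcal K}(X,Y)}\ge\kappa\,|X\wedge Y|^2\mass$. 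Using the continuity of the map $(X,Y,Z,W)\mapsto\rd XYZW$ on $L^\infty$-bounded subsets of $\WHHSs\cap\tanXinf$ (cf.\ Remark \ref{usefulcomputations} and \cite{GigliRiemann}) together with the density of $\TestV(\XX)$, this extends to all $X,Y\in\WHHSs\cap\tanXinf$, the inequality passing to the limit; polarizing via Remark \ref{polarization}, every $\rd XYZW$ with $X,Y,Z,W\in\WHHSs\cap\tanXinf$ is represented by a signed Radon measure $\mu_{X,Y,Z,W}$, whose total variation is finite by an argument like \textbf{Step 8} of the proof of Theorem \ref{hessrepr} (monotone convergence against the expression of Remark \ref{usefulcomputations}). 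Finally the standard cutoff argument --- pick $u_n\to 0$ in $\HSs$ with $u_n=1$ near a given compact $\capa$-null set, $0\le u_n\le 1$, with uniformly bounded supports, and use $\mass\ll\capa$ --- gives $\mu_{\bm{\mathcal K}(X,Y)}\ll\capa$, hence $\mu_{X,Y,Z,W}\ll\capa$.

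Next I would invoke Theorem \ref{captangent} to get the partition $\{A_k\}_k$ and, on each $A_k$, an orthonormal base $v^k_1,\dots,v^k_{n(k)}$ of $\tanXcap$ made of test vector fields, and then repeat \textbf{Step 5} of the proof of Theorem \ref{hessrepr} with four-fold tensor products: set $\mu^k_{i_1i_2i_3i_4}\defeq\mu_{v^k_{i_1},v^k_{i_2},v^k_{i_3},v^k_{i_4}}\mres A_k$ (finite signed measures, $\ll\capa$), $\mu^k\defeq\sum_{i_1,\dots,i_4=1}^{n(k)}|\mu^k_{i_1i_2i_3i_4}|$, $\tilde\nu^k\defeq\sum_{i_1,\dots,i_4=1}^{n(k)} v^k_{i_1}\otimes v^k_{i_2}\otimes v^k_{i_3}\otimes v^k_{i_4}\,\dv{\mu^k_{i_1i_2i_3i_4}}{\mu^k}$, then $\nu^k\defeq\tilde\nu^k/|\tilde\nu^k|$ and $|\mathbf{Riem}^k|\defeq|\tilde\nu^k|\mu^k$, and finally $\nu\defeq\sum_k\nu^k$, $|\mathbf{Riem}|\defeq\sum_k|\mathbf{Riem}^k|$. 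This $|\mathbf{Riem}|$ is $\sigma$-finite, absolutely continuous with respect to $\capa$, and $|\nu|=1$ $|\mathbf{Riem}|$-a.e.

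The core is then the representation \eqref{reprriemm}, proved as in \textbf{Step 6} of Theorem \ref{hessrepr}. The crucial locality statement is $\rd XYZW\mres\{|X|=0\}=0$ for $X,Y,Z,W\in\WHHSs\cap\tanXinf$: inserting the cutoffs $\varphi_n$ of Lemma \ref{Amb} relative to $X$, using $\varphi_n X\to0$ in $\WSHs$, the relation $f\rd XYZW=\rd{fX}{Y}{Z}{W}=\cdots$ of Proposition \ref{symms}, and the continuity in the first slot, one obtains $\int_\XX h\,\chi_{\{|X|=0\}}\dd\mu_{X,Y,Z,W}=0$ for every $h\in\LIPbs(\XX)$, and the symmetries of Proposition \ref{symms} transfer this to the remaining slots. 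Hence on each $A_k$ one may replace $X$ by $\tilde X\defeq\sum_{i=1}^{n(k)}(X\cdot v^k_i)\,v^k_i\in\TestV(\XX)$, which equals $X$ $\capa$-a.e.\ on $A_k$, and likewise $Y,Z,W$; expanding $\rd{\tilde X}{\tilde Y}{\tilde Z}{\tilde W}$ multilinearly over the base and recognizing the definitions of $\nu^k,|\mathbf{Riem}^k|$ gives $\rd XYZW(f)\mres A_k=(f\,X\otimes Y\otimes Z\otimes W\cdot\nu)\,|\mathbf{Riem}|\mres A_k$ for $f\in\Ss\cap\Lpi$; summing over $k$ gives \eqref{reprriemm}, while $|X\otimes Y\otimes Z\otimes W\cdot\nu|\,|\mathbf{Riem}|=|\mu_{X,Y,Z,W}|$ together with the finiteness of the total variation gives $X\otimes Y\otimes Z\otimes W\cdot\nu\in\Lp^1(|\mathbf{Riem}|)$. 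Uniqueness is as in Proposition \ref{uniquenesshess}, via a localized version of Lemma \ref{densitytestbargentens} for $T^{\otimes4}\XX$. The pointwise lower bound follows by reducing (by locality and density) to $v,w\in\WHHSs\cap\tanXinf$ and using $v\otimes w\otimes w\otimes v\cdot\nu\,|\mathbf{Riem}|=\mu_{\bm{\mathcal K}(v,w)}\ge\kappa\,|v\wedge w|^2\mass$. For the symmetries of $\nu$, the identities $\rd XYZW=-\rd YXZW=\rd ZWXY$ and $\rd XYZW+\rd YZXW+\rd ZXYW=0$ of Proposition \ref{symms}, read through \eqref{reprriemm} with $X,Y,Z,W$ ranging over the $v^k_i$ and combined with the density/uniqueness just used, translate into $\mathcal I(\nu)=-\nu$, $\mathcal J(\nu)=\nu$ and $\nu+\mathcal K(\nu)+\mathcal K^2(\nu)=0$ $|\mathbf{Riem}|$-a.e.

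The main obstacle I expect is the localization step above: because the sets $A_k$ of Theorem \ref{captangent} are merely Borel (with possibly empty interior), the distributional identities cannot be naively restricted to them, so the locality property $\rd XYZW\mres\{|X|=0\}=0$ and the replacement of the four rough vector fields by test vector fields adapted to $A_k$ must be carried out simultaneously in all four slots while keeping every $L^\infty$-norm under uniform control, in order to remain in the regime where the curvature operator is continuous; this bookkeeping, already intricate for the Hessian and the Ricci tensor, is heaviest here.
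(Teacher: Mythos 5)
Your proposal is correct and follows essentially the same route as the paper's proof: Riesz--Daniell--Stone applied to the sectional curvatures $\rd XYYX$, absolute continuity with respect to $\capa$ via the cutoff argument, polarization through Remark \ref{polarization}, assembly of $\nu$ and $|\mathbf{Riem}|$ on the pieces $A_k$ of Theorem \ref{captangent}, the locality claim $\mu_{X,Y,Z,W}\mres\{|X|=0\}=0$ via Lemma \ref{Amb}, and the symmetries and uniqueness via Proposition \ref{symms} and density. The only (immaterial) deviation is that you dominate the four-index measures by $\sum|\mu^k_{i_1i_2i_3i_4}|$ directly, whereas the paper uses the two-argument sectional measures evaluated at sums of basis vectors; both guarantee the absolute continuity needed to take the Radon--Nikodym derivatives.
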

\begin{proof}We divide the proof in several steps.
\medskip
\\\textbf{Step 1.} Uniqueness follows as in the proof of Proposition \ref{uniquenesshess},  by a localized version of Lemma \ref{densitytestbargentens}.
\medskip
\\\textbf{Step 2.} Notice that if $X,Y\in\WHHSs\cap\tanXinf$ and $f\in\Ss\cap\Lpi$, $f\ge 0$
then  $$\rd  X Y Y X(f)\ge \kappa\int_\XX f  | X\wedge  Y|^2\dd\mass,$$
thanks to an approximation argument that exploits the computations of Remark \ref{usefulcomputations} and \cite[Lemma 3.2]{BGBV}.

Therefore, Riesz-–Daniell–-Stone  Theorem yields that for every $X,Y\in\WHHSs\cap\tanXinf $ there exists a unique Radon measure $\mu_{X,Y}$ such that 
$$
\rd  X Y Y X(f)=\int_\XX f\dd\mu_{X ,Y}\qquad\text{for every }f\in\LIPbs(\XX).
$$
Clearly, for every $X,Y\in\WHHSs \cap\tanXinf$, $\mu_{X,Y}\ge \kappa |X\wedge Y|^2\dd\mass$ and the assignment $\WHHSs\cap\tanXinf\ni(X,Y)\mapsto\mu_{X,Y}$ is symmetric, by the symmetries of $\bm{\mathcal R}$ (Proposition \ref{symms}). Also,
we can prove, following \textbf{Step 2} of the proof of Theorem \ref{hessrepr} that $\mu_{X,Y}\ll\capa$, so that (see \textbf{Step 2} of the proof of Theorem \ref{hessrepr}  again and use an approximation argument based on the computations of Remark \ref{usefulcomputations})
$$
\rd  X Y Y X (f)=\int_\XX f\dd\mu_{X,Y}\qquad\text{for every }f\in\Ss\cap\Lpi,
$$
where we implicitly take the quasi continuous representative of $f$. This expression, together with the positivity of $\mu_{X,Y}-\kappa |X\wedge Y|^2$ yields that $\mu_{X,Y}$ is indeed a finite measure.
\medskip
\\\textbf{Step 3.} We notice that by Remark \ref{polarization} and by \textbf{Step 2}, if $X,Y,Z,W\in\WHHSs\cap\tanXinf$ then the map $\LIPbs(\XX)\ni f\mapsto \rd X Y Z W(f)$ is induced by a finite measure  $\mu_{X,Y,Z,W}$,
i.e.\ 
$$
\rd X Y Z W (f)=\int_\XX f\dd\mu_{X, Y, Z ,W}\qquad\text{for every }f\in\LIPbs(\XX),
$$
where
\begin{align*}
	\mu_{X, Y, Z ,W}&=\frac{1}{6}\Big(
	\mu_{X+W,Y+Z}
	-\mu_{X+W,Y}
	-\mu_{X+W,Z}
	-\mu_{Y+Z,X}
	-\mu_{Y+Z,W}
	+\mu_{X,Z}
	+\mu_{W,Y}\\&\qquad\qquad
	-\mu_{Y+W,X+Z}
	+\mu_{Y+W,X}
	+\mu_{Y+W,Z}
	+\mu_{X+Z,Y}
	+\mu_{X+Z,W}
	-\mu_{Y,Z}
	-\mu_{W,X}\big).
\end{align*}

Clearly, still $\mu_{X,Y,Z,W}\ll\capa$, hence, the equations above continue to hold even if only $f\in\Ss\cap\Lpi$.
Also, for $f\in\Ss\cap\Lpi$, and $X,Y,Z,W\in\WHHSs\cap\tanXinf$ 
\begin{equation}\label{csdma}
	f\mu_{X,Y,Z,W}=\mu_{f X,Y,Z,W}=\mu_{ X,f Y,Z,W}=\mu_{ X, Y,fZ,W}=\mu_{ X, Y,Z,fW}
\end{equation}
 by Proposition \ref{symms}.
 \medskip
\\\textbf{Step 4.} This is similar to \textbf{Step 5} of the proof of Theorem \ref{hessrepr}, we use the same notation. Let then $\{A_k\}$ and $\{v_i^k\}$ be as in \textbf{Step 5} of the proof of Theorem \ref{hessrepr}, building upon Theorem \ref{captangent}. Fix for the moment $k$ and
define, for $i,j,l,m=1,\dots,n(k)$,
$$
\mu_{i,j,l,m}^k\defeq\mu_{v_i^k+v_j^k,v_l^k+v_m^k}\mres A_k\qquad\text{and}\qquad  \mu_{i,j,l}^k\defeq\mu_{v_i^k+v_j^k,v_l^k}\mres A_k\qquad\text{and}\qquad \mu_{i,j}^k\defeq\mu_{v_i^k,v_j^k}\mres A_k
$$
and also  $$\mu^k\defeq\sum_{i,j,l,m=1}^{n(k)}\big(|\mu^k_{i,j,l,m}|+|\mu^k_{i,j,l}|+|\mu^k_{i,j}|\big).$$
Now we define
\begin{align*}
	\tilde\rho^k_{i,j,l,m}\defeq\dv{\mu_{v_i^k,v_j^k,v_l^k,v_m^k}\mres A_k}{\mu^k},
\end{align*}
notice that $\mu_{{v_i^k,v_j^k,v_l^k,v_m^k}}\ll\mu^k$ by construction for every $i,j,l,m=1,\dots n(k)$.
Set also
$$
\tilde\nu^k\defeq\sum_{i,j,l,m=1}^{n(k)} v^k_i\otimes v_j^k\otimes v^k_l\otimes v^k_m \tilde\rho^k_{i,j,l,m}.
$$
and 
$$
\nu^k\defeq\frac{1}{|\tilde\nu^k|}\tilde\nu^k\qquad\text{and}\qquad|\mathbf{Riem}^k|\defeq|\tilde\nu^k|\mu^k.
$$
Finally
$$
\nu\defeq\sum_k\nu^k\qquad\text{and}\qquad |\mathbf{Riem}|\defeq\sum_k|\mathbf{Riem}^k|.
$$

Clearly, $|\mathbf{Riem}|$ is a $\sigma$-finite measure, $|\mathbf{Riem}|\ll\capa$ and $|\nu|=1\ |\mathbf{Riem}|$-a.e.
\medskip
\\\textbf{Step 5}.
We claim that $$X\otimes Y\otimes Z\otimes W\,\cdot\,\nu^k\dd|\mathbf{Riem}^k|=\mu_{X,Y,Z,W}\mres A_k$$ for every $X,Y,Z,W\in\WHHSs\cap\tanXinf$ and for every $k$. Recall that $\int_\XX f\dd\mu_{X,Y,Z,W}=\rd X Y Z W (f)$ for every $f\in\Ss\cap\Lpi$, so that the claim will imply \eqref{reprriemm} and also the fact that  $$|X\otimes Y\otimes Z\otimes W\,\cdot\,\nu^k|\dd|\mathbf{Riem}^k|=|\mu_{X,Y,Z,W}|\mres A_k,$$
so that, being $\mu_{X,Y,Z,W}$ a finite measure, $X\otimes Y\otimes Z\otimes W \,\cdot\,\nu^k\in \mathrm{L}^1(|\mathbf{Riem}|)$.

Fix $k$ and  take then $X,Y,Z,W\in\WHHSs\cap\tanXinf$. We write $\tilde X\defeq \sum_{i=1}^{n(k)} X^i v_i^k$, for $X^i\defeq X\,\cdot\, v_i^k$ and similarly for $ Y, W, Z $. Notice  $X^i,Y^i,Z^i,W^i\in\HSs\cap\Lpi$ and  $\tilde X ,\tilde Y,\tilde Z,\tilde W \in\TestV(\XX)$. Notice that these newly defined functions and vector fields depend on $k$, but as we are working for a fixed $k$, we do not make this dependence explicit. We compute, on $A_k$,
\begin{align*}
	&X\otimes Y\otimes Z\otimes W\,\cdot\,\nu^k\dd|\mathbf{Riem}^k|=\sum_{i,j,l,m=1}^{n(k)}  X\otimes Y\otimes Z\otimes W\,\cdot v^k_i\otimes v^k_j\otimes v^k_l\otimes v^k_m \tilde\rho^k_{i,j,l,m}\dd\mu^k\\&\qquad
	=\sum_{i,j,l,m=1}^{n(k)}  X\otimes Y\otimes Z\otimes W\,\cdot v^k_i\otimes v^k_j\otimes v^k_l\otimes v^k_m\dd \mu_{v_i^k,v_j^k,v_l^k,v_m^k}\mres A_k
	\\&\qquad =\sum_{i,j,l,m=1}^{n(k)} X^i Y^jZ^l W^m\dd\mu_{v_i^k,v_j^k,v_l^k,v_m^k}\mres A_k=\sum_{i,j,l,m=1}^{n(k)}\dd\mu_{X^i v_i^k,Y^jv_j^k, Z^l v_l^k,W^mv_m^k}\mres A_k\\&\qquad=\mu_{\tilde X,\tilde Y,\tilde Z,\tilde W}\mres A_k,
\end{align*}
where the next to last equality is due to \eqref{csdma}.
We verify now that $\mu_{\tilde X,\tilde Y,\tilde Z,\tilde W}\mres A_k=\mu_{ X, Y, Z, W}\mres A_k$, which will conclude the proof of the claim. This will be similar to \textbf{Step 6} of the proof of Theorem \ref{hessrepr}. 

By multi-linearity and Proposition \ref{symms}, it is enough to show that for every $X,Y,Z,W\in\WHHSs\cap\tanXinf$, then $\mu_{X,Y,Z,W}\mres\{ |X|=0\}=0$.
We take $\{\varphi_n\}_n$  be as in Lemma \ref{Amb} for the vector field $X\in\WHHSs\cap\tanXinf$. We take also $h\in\LIPbs(\XX)$ and we compute (recall Lemma \ref{calculushodge})
$$
\int_\XX h\varphi_n \dd\mu_{X,Y,Z,W}=\int_\XX h\dd\mu_{\varphi_n X,Y,Z,W}=\rd {\varphi_n X} Y Z W (h).
$$
By Lemma \ref{Amb}  and the expression for the map $\bm{\mathcal R}$ in Remark \ref{usefulcomputations}, the right hand side of the equation above converges to $0$, whereas the left hand side converges to $$\int_\XX h\chi_{\{\abs{X}=0\}}\dd\mu_{X,Y,Z,W}.$$
This is to say that for every $h\in\LIPbs(\XX)$ $$\int_\XX h\chi_{\{\abs{X}=0\}}\dd\mu_{X,Y,Z,W}=0$$ whence the claim.
\medskip
\\\textbf{Step 6.} By approximation (Lemma \ref{densitytestbargentens}), it is enough to show the claim  for $X,Y\in\TestV(\XX)$. Then the claim follows from \eqref{reprriemm} and the assumption on the bound from below for the sectional curvature.
\medskip
\\\textbf{Step 7.} The symmetries claimed follow from Proposition \ref{symms}. We prove, for example, the first one. It is enough to show that $\nu+\mathcal I(\nu)=0$ with respect to $|\mathbf{Riem}|$-a.e.\ equality. Now, if $X,Y,Z,W\in\WHHSs\cap\tanXinf$, then 

\begin{align*}
	&\int_\XX X\otimes Y\otimes Z\otimes W\,\cdot\, (\nu+\mathcal I (\nu)) |\mathbf{Riem}|\\&\qquad=\int_\XX X\otimes Y\otimes Z\otimes W\,\cdot\, \nu |\mathbf{Riem}|+\int_\XX \mathcal I(X\otimes Y\otimes Z\otimes W)\,\cdot\, \nu |\mathbf{Riem}|\\&\qquad=\rd X Y Z W(1)+\rd Y  X Z W(1)=0,
\end{align*}
so that the claim follows from Lemma \ref{densitytestbargentens}.
\end{proof}

\begin{rem}
Notice that, thanks to its symmetries, the tensor field $\nu$ of Theorem \ref{riemrepr} can be seen as an element of $\big(\mathrm{L}^0_\capa(T\XX)^{\wedge 2}\big)^{\otimes 2}$.\fr
\end{rem}
\begin{rem}
Comparing the main results of Section \ref{sectionric} and Section \ref{sectionriem}, we may wonder whether $\Ric$ is linked to the trace of $\bm{\mathcal R}$. By what remarked in Remark \ref{remricc}, we see that this question makes sense only on non-collapsed spaces. However, the non-smooth structure of the space, in particular, the lack of a third order calculus and charts defined on open sets, prevent us to give an easy proof of this fact.\fr
\end{rem}

\end{document}